\documentclass{article}

\usepackage{fullpage}
\usepackage{amsmath}
\usepackage{amsthm}
\usepackage{amssymb}
\usepackage{graphicx}
\usepackage{comment}
\usepackage{tikz}

\newtheorem{thm}{Theorem}[section]
\newtheorem{prop}[thm]{Proposition}
\newtheorem{cor}[thm]{Corollary}
\newtheorem{lemma}[thm]{Lemma}

\theoremstyle{definition}

\newtheorem*{ex}{Example}
\newtheorem*{rmk}{Remark}

\newcommand{\mo}[2]{[#1]_{#2}}

\title{Coefficients of a relative of cyclotomic polynomials}
\author{Ricky Ini Liu\\University of Michigan\\Ann Arbor, Michigan\\\texttt{riliu@umich.edu}}

\begin{document}
\maketitle

\begin{abstract}
Let $N=p_1p_2\cdots p_n$ be a product of $n$ distinct primes. Define $P_N(x)$ to be the polynomial $(1-x^N)\prod_{1\leq i<j\leq n}(1-x^{N/(p_ip_j)})/\prod_{i=1}^n (1-x^{N/p_i})$. (When $n=2$, $P_{pq}(x)$ is the $pq$-th cyclotomic polynomial, and when $n=3$, $P_{pqr}(x)$ is $(1-x)$ times the $pqr$-th cyclotomic polynomial.) Let the height of a polynomial be the maximum absolute value of one of its coefficients. It is well known that the height of $\Phi_{pq}(x)$ is 1, and Gallot and Moree \cite{GallotMoree} showed that the same is true for $P_{pqr}(x)$ when $n=3$. We show that the coefficients of $P_N(x)$ depend mainly on the relative order of sums of residues of the form $p_j^{-1} \pmod {p_i}$. This allows us to explicitly describe the coefficients of $P_N(x)$ when $n=3$ and show that the height of $P_N(x)$ is at most 2 when $n=4$. We also show that for any $n$ there exist $P_N(x)$ with height 1 but that in general the maximum height of $P_N(x)$ is a function depending only on $n$ with growth rate $2^{n^2/2+O(n\log n)}$.
\end{abstract}

\section{Introduction}

We say the \emph{height} of a polynomial in $\mathbf Z[x]$ is the maximum absolute value of any of its coefficients. It has long been known that the cyclotomic polynomials $\Phi_n(x)$ have a tendency to have small heights compared to $n$. For instance, if $n$ is divisible by at most two odd primes, then the height of $\Phi_n(x)$ is 1 \cite{LamLeung}. However, when $n = pqr$, where $p<q<r$ are odd primes, the height of $\Phi_n(x)$ is only bounded by a linear function in $p$ \cite{Beiter}. On the other hand, Gallot and Moree showed that any two adjacent coefficients of $\Phi_{pqr}(x)$ differ by at most 1, which is equivalent to showing that $(1-x)\Phi_{pqr}(x)$ has height 1 \cite{GallotMoree}. The heights of other cyclotomic polynomials and products of cyclotomic polynomials have been studied extensively elsewhere (see, for instance,  \cite{Elder}, \cite{GallotMoree2}, \cite{Kaplan}, \cite{Kaplan2}, \cite{Moree}, \cite{ZhaoZhang}).

We generalize $\Phi_{pq}(x)$ and $(1-x)\Phi_{pqr}(x)$ by considering the polynomial
\[P_N(x)=\frac{(1-x^N)\prod_{1\leq i<j\leq n}(1-x^{N_{ij}})}{\prod_{i=1}^n (1-x^{N_i})},\]
where $N=p_1p_2\dotsm p_n$ is a product of $n$ distinct primes and $N_{i_1\cdots i_m} = N/(p_{i_1}\cdots p_{i_m})$. In some sense, $P_N(x)$ is a toy version of the cyclotomic polynomial in that $\Phi_N(x)$ can be written as a rational function containing in the numerator or denominator all $(1-x^{N_{i_1\dots i_m}})$ while $P_N(x)$ contains only those for which $m \leq 2$.

Let $M(n)$ be the maximum height of $P_N(x)$ when $N$ has $n$ distinct prime factors. Since when $n=2$, $P_{pq}(x) = \Phi_{pq}(x)$, and when $n=3$, $P_{pqr}(x) = (1-x)\Phi_{pqr}(x)$, it is already known that $M(2)=M(3)=1$. We will show that $M(4)=2$ as well as that $M(n)$ exists for all $n$, so that the height of $P_N(x)$ is bounded by a function in $n$ that does not depend on the individual primes dividing $N$. We will also provide an explicit expression for the coefficients of $P_N(x)$ that generalizes the known expressions for the coefficients of $\Phi_{pq}(x)$, and we exhibit polynomials $P_N(x)$ with height 1 for any $n$. Finally, we will show that although $M(n)$ is small when $n$ is small, in fact $M(n)$ grows exponentially in $n^2$.

We begin in Section 2 with some preliminaries, including a short proof that the height of $\Phi_{pq}(x)$ is 1. In Section 3 we prove that the coefficients of $P_N(x)$ can be described in terms of the relative order of sums of residues of the form $p_j^{-1} \pmod {p_i}$, and we then use this to give an upper bound on $M(n)$. In Section 4 we explicitly and pictorially describe the coefficients of $P_N(x)$ for $n=3$ and prove that $M(4)=2$. In Section 5 we construct polynomials $P_N(x)$ with large height and thereby show that $M(n) = 2^{\frac 12n^2+O(n \log n)}$. Finally, in Section 6 we construct $P_N(x)$ with height 1 for all $n$.

\section{Preliminaries}
Let $n>1$ be a positive integer, and let $N=p_1p_2\dotsm p_n$ be the product of $n$ distinct primes. For ease of notation throughout,  we write $N_{i_1\cdots i_m} = N/(p_{i_1}\cdots p_{i_m})$ for any $i_1, \dots, i_m \in [n]$.

Our main object of study is the following:

\[P_N(x)=\frac{(1-x^N)\prod_{1\leq i<j\leq n}(1-x^{N_{ij}})}{\prod_{i=1}^n (1-x^{N_i})}.\]

\begin{rmk} Essentially all of the results below regarding the coefficients of $P_N(x)$ will hold even when the $p_i$ are not distinct primes but only pairwise relatively prime positive integers greater than 1. However, we will assume for the remainder that they are prime in order to simplify the statements of the results.
\end{rmk}

\begin{prop}
The rational function $P_N(x)$ is a polynomial with integer coefficients.
\end{prop}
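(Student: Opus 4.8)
The plan is to work in the ring of formal power series $\mathbf{Z}[[x]]$ and show first that $P_N(x)$ is a well-defined power series with integer coefficients, then argue separately that it is in fact a polynomial. For the first part, I would rewrite the defining expression by pairing each factor in the denominator with factors in the numerator. The key observation is that the total multiset of exponents behaves like an inclusion–exclusion over subsets of $[n]$ of size at most $2$: each $(1-x^{N_S})$ for $|S|\le 2$ appears with sign $(-1)^{|S|}$. Concretely, I would use the identity $\frac{1}{1-x^{N_i}} = 1 + x^{N_i} + x^{2N_i} + \cdots$ and expand the whole product; since $\prod(1-x^{N_i})$ divides the numerator in $\mathbf{Z}[[x]]$ precisely when the numerator vanishes to at least the same order as each factor at the relevant roots of unity, and each $1-x^{N_i}$ is a product of distinct cyclotomic factors, it suffices to check the order of vanishing of the numerator at each primitive $d$-th root of unity for $d \mid N_i$.

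Here is the cleaner route I would actually carry out. For a primitive $d$-th root of unity $\zeta$ with $d \mid N$, let $a(d)$ denote the order of vanishing of $1-x^{N}$ at $\zeta$ (which is $1$ if $d \mid N$, else $0$), and similarly track the contributions of $\prod_{i<j}(1-x^{N_{ij}})$ and $\prod_i(1-x^{N_i})$. Since all exponents $N, N_{ij}, N_i$ divide $N$ and $N$ is squarefree, $1-x^m$ for $m\mid N$ has a simple zero at $\zeta$ iff $d \mid m$ and no zero otherwise. So the order of vanishing of the numerator at $\zeta$ is
\[
[d\mid N] + \#\{(i,j): i<j,\ d \mid N_{ij}\} = 1 + \binom{k}{2},
\]
where $k = \#\{i : d\nmid p_i\text{-part}\}$, i.e. $k$ is the number of indices $i$ with $d\mid N_i$; and the order of vanishing of the denominator is exactly $k$. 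Thus it suffices to verify $1 + \binom{k}{2} \ge k$ for all integers $k\ge 0$, which holds since $\binom{k}{2} - k + 1 = \frac{(k-1)(k-2)}{2}\ge 0$. This shows $\prod_i(1-x^{N_i})$ divides the numerator in $\mathbf{C}[x]$, and since $\prod_i(1-x^{N_i})$ is monic up to sign (its lowest-degree coefficient is $1$, or we may reverse), the quotient has integer coefficients by Gauss's lemma / division algorithm in $\mathbf{Z}[x]$.

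The main obstacle is bookkeeping the multiplicities correctly — in particular being careful that "$d$ divides $N_i$" is the right condition and that the count of pairs $(i,j)$ with $d \mid N_{ij}$ is exactly $\binom{k}{2}$ (this needs $N$ squarefree, so that $d \mid N_{ij}$ iff $d\mid N_i$ and $d\mid N_j$). Once the multiplicity inequality $1+\binom{k}{2}\ge k$ is in hand, polynomiality and integrality both follow immediately, so I expect the write-up to be short. An alternative, more self-contained approach avoiding roots of unity entirely: induct on $n$, grouping factors to express $P_N(x)$ as a product over $i$ of terms like $\frac{(1-x^{N})\prod_j(1-x^{N_{ij}})}{\cdots}$ telescoping against the $n=2$ case, but the root-of-unity multiplicity count above is cleaner and I would present that.
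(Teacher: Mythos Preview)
Your proposal is correct and is essentially the same argument as the paper's: both count the multiplicity of a primitive $d$-th root of unity (with $d\mid N$) in numerator and denominator, obtaining $1+\binom{k}{2}$ versus $k$ (the paper writes $k=n-m$ where $d=p_{i_1}\cdots p_{i_m}$), and both conclude integrality from the denominator being monic up to sign. The only cosmetic difference is your parametrization by $k=\#\{i:d\mid N_i\}$ rather than by $m$, and your explicit mention of Gauss's lemma.
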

\begin{proof}
Every root of the denominator is a root of unity with order of the form $p_{i_1}\dotsm p_{i_m}$. Such a root appears $n-m$ times in the denominator and $1+\binom{n-m}{2}$ times in the numerator. Since $\binom{n-m}{2}+1-(n-m)=\frac 12(n-m-1)(n-m-2) \geq 0$, $P_N(x)$ is a polynomial, and it has integer coefficients since both the numerator and denominator are monic integer polynomials (up to sign).
\end{proof}

When $n=2$ and $N=pq$, we have
\[P_{pq}(x) = \frac{(1-x^{pq})(1-x)}{(1-x^p)(1-x^q)} = \Phi_{pq}(x),\]
where $\Phi_{pq}$ is the $pq$-th cyclotomic polynomial. Likewise, when $n=3$ and $N=pqr$, we find that $P_{pqr}(x) = (1-x)\Phi_{pqr}(x)$.

It is well known that $\Phi_{pq}$ has all of its coefficients at most 1 in absolute value (see, for instance, \cite{LamLeung}), and Gallot and Moree \cite{GallotMoree} have recently shown that $(1-x)\Phi_{pqr}(x)$ also has this property. We give one proof of the result for $\Phi_{pq}$ below for illustrative purposes.

For a rational number $c=\frac{a}{b}$ with $b$ relatively prime to $m$, we will write $\mo cm$ for the smallest nonnegative integer $k$ such that $kb \equiv a \pmod m$. Note that $0 \leq \mo cm \leq m-1$.

We first need the following easy lemma.

\begin{lemma} \label{pq+1}
Let $p$ and $q$ be distinct primes. Then \[(1-x^{p\mo{p^{-1}}q}) + (x^{pq}-x^{q\mo{q^{-1}}p}) \equiv 1-x \pmod {(1-x^p)(1-x^q)}.\]

\end{lemma}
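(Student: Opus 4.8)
The plan is to clear denominators and prove the equivalent divisibility statement. Writing $a=\mo{p^{-1}}{q}$ and $b=\mo{q^{-1}}{p}$ (so $pa\equiv 1\pmod q$ with $1\le a\le q-1$, and $qb\equiv 1\pmod p$ with $1\le b\le p-1$; neither can be $0$ since $p,q>1$), subtracting $1-x$ from both sides shows the lemma is equivalent to
\[ f(x) := x^{pq} - x^{qb} - x^{pa} + x \;\equiv\; 0 \pmod{(1-x^p)(1-x^q)}. \]
It is worth noting up front that one cannot argue merely that $f$ is divisible by each of $1-x^p$ and $1-x^q$: since $\gcd(1-x^p,1-x^q)=1-x\ne 1$, that would only yield divisibility by their lcm, which is off by one factor of $1-x$. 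The real content is divisibility by the full product.

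First I would show $1-x^p\mid f(x)$. Grouping $f(x)=(x^{pq}-x^{pa})-(x^{qb}-x)$, the first bracket is $x^{pa}(x^{p(q-a)}-1)$, visibly divisible by $x^p-1$; in the second bracket, $x^{qb}-x=x(x^{qb-1}-1)$ and $qb-1$ is a multiple of $p$ by definition of $b$, so $x^{qb-1}-1$ is divisible by $x^p-1$. Dividing through gives $f(x)=(x^p-1)g(x)$ with
\[ g(x) = \sum_{j=a}^{q-1} x^{pj} \;-\; \sum_{k=0}^{c-1} x^{pk+1}, \qquad c := (qb-1)/p . \]

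The key step — and the one I expect to be the crux — is the identity $c=q-a$. Since $pc=qb-1\equiv -1\equiv -pa\pmod q$ and $p$ is invertible mod $q$, we get $c\equiv q-a\pmod q$; from $1\le b\le p-1$ one checks $0\le c\le q-1$, while $q-a$ also lies in $\{1,\dots,q-1\}$, so the two agree. With $c=q-a$ both sums in $g(x)$ have exactly $q-a$ terms, and pairing the $t$-th terms ($t=0,\dots,q-a-1$) gives
\[ x^{p(a+t)} - x^{pt+1} = x^{pt+1}\bigl(x^{pa-1}-1\bigr). \]
Because $pa-1$ is a multiple of $q$ (definition of $a$), each such difference is divisible by $x^q-1$, hence so is $g(x)$. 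Therefore $f(x)=(x^p-1)g(x)$ is divisible by $(x^p-1)(x^q-1)=(1-x^p)(1-x^q)$, which is the claim. Beyond the identity $c=q-a$, the only care required is checking the edge cases of the index ranges (that $a,b,c\ge 1$), all immediate from $p,q$ being distinct primes.
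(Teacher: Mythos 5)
Your proof is correct, and it hinges on the same arithmetic fact as the paper's, namely that $p\mo{p^{-1}}{q}+q\mo{q^{-1}}{p}=pq+1$ (your identity $c=q-a$ is equivalent to this). The execution differs: you factor out $1-x^p$ from $f$ by hand, rewrite the quotient $g$ as a difference of two geometric-style sums with matching lengths, and then pair terms to extract the factor $1-x^q$. The paper instead uses the identity $pa+qb=pq+1$ directly to produce the closed factorization
\[
x-x^{pa}-x^{qb}+x^{pq}=x\,(1-x^{pa-1})(1-x^{qb-1}),
\]
which makes both divisibilities visible at once, since $q\mid pa-1$ and $p\mid qb-1$. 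This one-line factorization is cleaner and sidesteps the careful index bookkeeping in your pairing argument, but your version has the pedagogical virtue of making explicit why the naive ``divisible by each factor separately'' argument is insufficient (as you correctly flag at the outset), and your step-by-step reduction shows concretely where the extra factor of $1-x$ would otherwise be lost. Both are valid; if you notice that your computation $g(x)=\sum_{t=0}^{q-a-1}x^{pt+1}(x^{pa-1}-1)$ is really $x(1-x^{pa-1})\cdot\frac{x^{p(q-a)}-1}{x^p-1}\cdot(-1)$ in disguise, you can recover the paper's compact form.
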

\begin{proof}
Note that $p\mo{p^{-1}}{q} + q \mo{q^{-1}}{p}$ is congruent to $1 \pmod p$ and $1 \pmod q$ and lies between 1 and $2pq$, so it must equal $pq+1$. Then the difference between the two sides is
\[x - x^{p\mo{p^{-1}}q} - x^{q\mo{q^{-1}}p} + x^{pq} = x(1-x^{p \mo{p^{-1}}q-1})(1-x^{q \mo{q^{-1}}p-1}),\]
and the two binomials are divisible by $1-x^q$ and $1-x^p$, respectively.
\end{proof}

We now derive an expression for $P_{pq}(x)$ that will allow us to easily extract its coefficients.

\begin{prop} \label{pq}
Modulo $1-x^{pq}$, $P_{pq}(x)$ is congruent to the polynomial
\[\frac{1-x^{pq}}{1-x^q}\cdot\frac{1-x^{p \mo {p^{-1}}{q}}}{1-x^p}+\frac{1-x^{pq}}{1-x^p}\cdot\frac{x^{pq}-x^{q \mo {q^{-1}}{p}}}{1-x^q}.\]
\end{prop}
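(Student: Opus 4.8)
The plan is to start from the definition $P_{pq}(x) = (1-x^{pq})(1-x)/((1-x^p)(1-x^q))$ and use Lemma~\ref{pq+1} to rewrite the factor $(1-x)$ in the numerator. Specifically, Lemma~\ref{pq+1} tells us that $(1-x^{p\mo{p^{-1}}q}) + (x^{pq}-x^{q\mo{q^{-1}}p}) \equiv 1-x \pmod{(1-x^p)(1-x^q)}$, so if I multiply both sides by $(1-x^{pq})/((1-x^p)(1-x^q))$ — which I should first check is a polynomial, or at least argue the congruence is preserved — I obtain
\[
P_{pq}(x) \;\equiv\; \frac{(1-x^{pq})(1-x^{p\mo{p^{-1}}q})}{(1-x^p)(1-x^q)} \;+\; \frac{(1-x^{pq})(x^{pq}-x^{q\mo{q^{-1}}p})}{(1-x^p)(1-x^q)}
\]
modulo some appropriate multiple of $1-x^{pq}$. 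So the core of the argument is just: divide the Lemma~\ref{pq+1} congruence by $(1-x^p)(1-x^q)$ and regroup the two displayed terms into the two summands in the statement, pairing $1-x^{pq}$ with $1-x^q$ in the first and with $1-x^p$ in the second.

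The step that needs care is the bookkeeping of \emph{what} modulus survives after multiplying a congruence modulo $(1-x^p)(1-x^q)$ by the rational factor $(1-x^{pq})/((1-x^p)(1-x^q))$. The clean way is to observe that $1 - x^{pq}$ is divisible by both $1-x^p$ and $1-x^q$, and that $\gcd(1-x^p,1-x^q) = 1-x$ in $\mathbf{Z}[x]$ (since $\gcd(p,q)=1$), so $(1-x^p)(1-x^q)$ divides $(1-x)(1-x^{pq})$, hence $(1-x^{pq})/((1-x^p)(1-x^q))$ times $(1-x)$ is already $(1-x^{pq})/\mathrm{lcm}(1-x^p,1-x^q)$ up to the unit $(1-x)$-cancellation — in any case a polynomial. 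Then if $f \equiv g \pmod{(1-x^p)(1-x^q)}$, multiplying by the polynomial $h := (1-x^{pq})/((1-x^p)(1-x^q)) \cdot (1-x)^{-1}\cdot(1-x) $ — i.e. writing everything so that the product with the left side of Lemma~\ref{pq+1} lands in $\mathbf{Z}[x]$ — gives a congruence modulo $(1-x^{pq})$, because $h \cdot (1-x^p)(1-x^q)$ is a polynomial multiple of $1-x^{pq}$. I need to verify that the two summands on the right-hand side are individually polynomials (each is $(1-x^{pq})/(1-x^q)$ times $(1-x^{p\mo{p^{-1}}q})/(1-x^p)$, and $1-x^q \mid 1-x^{pq}$ while $1-x^p \mid 1-x^{p\mo{p^{-1}}q}$ exactly because $p \mid p\mo{p^{-1}}q$), so the decomposition makes sense term by term.

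The main obstacle, then, is not conceptual difficulty but making the modulus tracking airtight: one must confirm that multiplying the Lemma~\ref{pq+1} identity through and regrouping produces precisely a congruence modulo $1-x^{pq}$ and not modulo some larger or smaller quantity. Once that is pinned down, the identity follows formally. An alternative, perhaps cleaner, route that avoids rational-function manipulation entirely is to clear denominators: multiply the claimed congruence through by $(1-x^p)(1-x^q)$, so that the target becomes the polynomial identity $(1-x^{pq})(1-x) \equiv (1-x^{pq})(1-x^{p\mo{p^{-1}}q}) + (1-x^{pq})(x^{pq}-x^{q\mo{q^{-1}}p}) \pmod{(1-x^p)(1-x^q)(1-x^{pq})}$, which is just Lemma~\ref{pq+1} multiplied by $(1-x^{pq})$; then reinterpret this as the desired congruence after dividing back out, using that the right-hand summands are genuine polynomials. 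I would present whichever of these two formulations reads most transparently, likely the clear-denominators version since it reduces everything to Lemma~\ref{pq+1} with no fractions in sight.
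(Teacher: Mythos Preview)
Your proposal is correct and uses the same key input (Lemma~\ref{pq+1}) as the paper; the paper's argument is just a slightly cleaner packaging of your ``clear denominators'' route. Rather than tracking moduli through a rational-function multiplication, the paper simply subtracts $P_{pq}(x)$ from the claimed expression, factors out $1-x^{pq}$, and observes that the quotient $\bigl((1-x^{p\mo{p^{-1}}q}) + (x^{pq}-x^{q\mo{q^{-1}}p}) - (1-x)\bigr)/\bigl((1-x^p)(1-x^q)\bigr)$ is a polynomial by Lemma~\ref{pq+1}, which immediately gives the congruence.
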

\begin{proof}
By subtracting $P_{pq}(x)$ from the above expression and dividing by $1-x^{pq}$, we must show that
\[\frac{(1-x^{p \mo{p^{-1}}q})+(x^{pq}-x^{q \mo {q^{-1}}p}) - (1-x)}{(1-x^p)(1-x^q)}\]
is a polynomial. But this follows from Lemma~\ref{pq+1}.
\end{proof}

Let us write $\{a < b\}=1$  if $a<b$ and 0 otherwise (and similarly for other inequalities).

\begin{prop} \label{pq-coeffs}
For $0 \leq k < pq$, the coefficient of $x^k$ in $P_{pq}(x)$ is \[\{\mo{kp^{-1}}q<\mo{p^{-1}}q\}-\{\mo{kq^{-1}}p\geq \mo{q^{-1}}p\}.\]
\end{prop}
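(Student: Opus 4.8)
The plan is to start from the closed form for $P_{pq}(x)$ modulo $1-x^{pq}$ supplied by Proposition~\ref{pq} and expand each of its two summands as an explicit sum of monomials. First I would rewrite the geometric quotients: since $\frac{1-x^{pq}}{1-x^q}=\sum_{a=0}^{p-1}x^{aq}$ and $\frac{1-x^{p\mo{p^{-1}}q}}{1-x^p}=\sum_{b=0}^{\mo{p^{-1}}q-1}x^{bp}$, the first term becomes $\sum_{a=0}^{p-1}\sum_{b=0}^{\mo{p^{-1}}q-1}x^{aq+bp}$. For the second term I would use $\frac{1-x^{pq}}{1-x^p}=\sum_{b=0}^{q-1}x^{bp}$ and, writing $c=\mo{q^{-1}}p$, simplify $\frac{x^{pq}-x^{qc}}{1-x^q}=-x^{qc}\cdot\frac{1-x^{q(p-c)}}{1-x^q}=-\sum_{a=c}^{p-1}x^{aq}$, so that the second term becomes $-\sum_{a=\mo{q^{-1}}p}^{p-1}\sum_{b=0}^{q-1}x^{aq+bp}$.

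Next I would invoke the Chinese Remainder Theorem: as $(a,b)$ ranges over $\{0,\dots,p-1\}\times\{0,\dots,q-1\}$, the residues of $aq+bp$ modulo $pq$ are all distinct and hence hit each residue exactly once. For fixed $k$ with $0\le k<pq$, the unique such pair satisfies $bp\equiv k\pmod q$ and $aq\equiv k\pmod p$, i.e.\ $b=\mo{kp^{-1}}q$ and $a=\mo{kq^{-1}}p$. Therefore, after reducing modulo $1-x^{pq}$, the monomial $x^k$ occurs in the first double sum precisely when $b=\mo{kp^{-1}}q$ satisfies $b<\mo{p^{-1}}q$ (the range of $a$ there being full imposes no condition), contributing $+1$; and it occurs in the second double sum precisely when $a=\mo{kq^{-1}}p$ satisfies $a\ge\mo{q^{-1}}p$, contributing $-1$. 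Summing these yields exactly $\{\mo{kp^{-1}}q<\mo{p^{-1}}q\}-\{\mo{kq^{-1}}p\ge\mo{q^{-1}}p\}$.

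The one point that genuinely needs care — and the step I expect to be the main obstacle — is justifying that nothing is lost in passing to the reduction modulo $1-x^{pq}$: one observes that $\deg P_{pq}(x)=pq-p-q+1<pq$, so $P_{pq}(x)$ is the unique representative of its class modulo $1-x^{pq}$ of degree less than $pq$, and hence its coefficient of $x^k$ for $0\le k<pq$ is precisely what one reads off after wrapping all the exponents $aq+bp$ (which can be as large as $2pq-p-q$) back into $[0,pq)$. A secondary subtlety is pinning down the endpoints of the geometric series so that the first inequality is strict while the second is not; this is dictated by the fact that expanding $\frac{x^{pq}-x^{qc}}{1-x^q}$ as $-\sum_{a=c}^{p-1}x^{aq}$ includes the index $a=c$. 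Once this bookkeeping is settled, no further computation is needed.
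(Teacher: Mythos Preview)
Your proposal is correct and follows essentially the same approach as the paper: start from Proposition~\ref{pq}, expand each summand as a product of finite geometric series, use the Chinese Remainder Theorem to identify for each $k$ the unique monomial $x^{aq+bp}$ with $aq+bp\equiv k\pmod{pq}$, and invoke $\deg P_{pq}<pq$ to pass from the reduction modulo $1-x^{pq}$ back to $P_{pq}$ itself. The paper's write-up is terser (it only expands the first term explicitly and leaves the second to ``a similar analysis''), but your expansion of the second term and your remarks on the degree bound and the endpoint bookkeeping are exactly the details one would fill in.
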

\begin{proof}
The first term in Proposition~\ref{pq} can be written as
\[(1+x^p+x^{2p}+\dots+x^{(\mo{p^{-1}}q-1)p})(1+x^q+x^{2q}+\dots+x^{(p-1)q}).\]
This has terms of the form $x^{ap+bq}$, where $0 \leq a < \mo{p^{-1}}q$ and $0 \leq b < p$. But $\mo{(ap+bq)p^{-1}}q = a$, so modulo $1-x^{pq}$ these are the terms $x^k$, $0 \leq k < pq$, such that $\mo{kp^{-1}}q < \mo{p^{-1}}q$. A similar analysis of the second term in Proposition~\ref{pq} (as well as the fact that $\deg P_{pq}<pq$) gives the result.
\end{proof}

One can describe Proposition~\ref{pq-coeffs} pictorially as follows. (A similar diagram can be found in \cite{Elder}.) Construct an array with $p$ rows and $q$ columns such that the entry in the $(i+1)$st row and $(j+1)$st column is $\mo{pj+qi}{pq}$. Then to find the coefficient of $x^k$, we add 1 if $k$ lies in the first $\mo{p^{-1}}q$ columns and subtract 1 if it lies in the last $p-\mo{q^{-1}}p$ rows. (See Figure~\ref{2d}.) There are therefore $\mo{p^{-1}}q\mo{q^{-1}}p$ coefficients equal to 1 and $(q-\mo{p^{-1}}q)(p-\mo{q^{-1}}p)=\mo{p^{-1}}q\mo{q^{-1}}p-1$ coefficients equal to $-1$.

\begin{figure}
\begin{center}
\begin{tabular}{c||ccc|cccc}
&+&+&+&&&&\\
\hline\hline
&0&5&10&15&20&25&30\\
&7&12&17&22&27&32&2\\
&14&19&24&29&34&4&9\\
\hline
--&21&26&31&1&6&11&16\\
--&28&33&3&8&13&18&23
\end{tabular}
\end{center}
\caption{Finding the coefficients of $P_{pq}(x)=\Phi_{pq}(x)$ when $p=5$ and $q=7$. Since $p\mo{p^{-1}}q + q\mo{q^{-1}}p \equiv 1 \pmod {pq}$, we draw lines directly to the left and above $1$ in the table. To find the coefficient of $x^k$, find $k$ and add the signs in the corresponding row and column. Therefore the coefficient is $1$ for the exponents in the northwest region, $-1$ for those in the southeast region, and $0$ for all others.}
\label{2d}
\end{figure}

Let the \emph{height} of a polynomial be the maximum absolute value of one of its coefficients. We will write $M(n)$ for the maximum height of $P_N(x)$ over all $N=p_1p_2\dotsm p_n$. We have just seen that $M(2)=1$.

The purpose of the next section is to provide a similar description of $P_N(x)$ for larger $n$ and thereby derive an upper bound on $M(n)$ (in particular showing that $M(n)$ exists).

\section{Coefficients of $P_N(x)$}

In Section~2, we have seen that for $0 \leq k < pq$, the coefficient of $x^k$ in $P_{pq}(x)$ depends only on the relative orders of the elements in $\{\mo{kp^{-1}}q, \mo{p^{-1}}q\}$ and $\{\mo{kq^{-1}}p, \mo{q^{-1}}p\}$. In this section we will show that a similar result holds for the coefficients of $P_N(x)$ in general. We will also provide an exponential upper bound on $M(n)$. This bound will be enough to show that $M(3)=1$, and we will see that this bound is in fact asymptotically tight in Section~5.

We first mimic the strategy of Proposition~\ref{pq}: we will write $P_N(x)$ modulo $1-x^N$ as a linear combination of $(1-x^N)/(1-x^{N_i})$. In fact, we will do so in $2^{\binom n2}$ different ways. (We remark that this approach essentially expresses $P_N(x)$ as a ``coboundary'' \`a la Question 25 of Musiker and Reiner \cite{MusikerReiner}.)

\begin{prop} \label{pn}
Let $S$ be a set such that for all integers $1 \leq i \neq j \leq n$, $S$ contains exactly one of the ordered pairs $(i,j)$ and $(j,i)$. Then modulo $1-x^N$, $P_N(x)$ is congruent to the polynomial

\[
\sum_{i=1}^n \biggl(
\frac{1-x^N}{1-x^{N_i}}\cdot
\prod_{(i,j) \in S} \frac{1-x^{\mo{p_i^{-1}}{p_j} \cdot N_j}}{1-x^{N_j}}\cdot
\prod_{(j,i) \in S} \frac{x^N-x^{\mo{p_i^{-1}}{p_j} \cdot N_j}}{1-x^{N_j}}\cdot
\prod_{\substack{1 \leq j_1 < j_2 \leq n\\j_1,j_2 \neq i}}\left(1-x^{N_{j_1j_2}}\right)
\biggr).
\]
\end{prop}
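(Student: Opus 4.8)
The plan is to mimic the argument of Proposition~\ref{pq} exactly: call the claimed polynomial $Q_N(x)$, form the difference $P_N(x) - Q_N(x)$, divide by $1-x^N$, and show the result is a polynomial. Since every summand of $Q_N(x)$ already carries a factor $\frac{1-x^N}{1-x^{N_i}}$ and $P_N(x)$ carries the factor $1-x^N$ in its numerator, dividing by $1-x^N$ is legitimate, and what must be shown is that
\[
\frac{P_N(x)}{1-x^N} - \sum_{i=1}^n \frac{1}{1-x^{N_i}}\,A_i(x) \prod_{\substack{1\le j_1<j_2\le n\\ j_1,j_2\ne i}}(1-x^{N_{j_1j_2}})
\]
is a polynomial, where $A_i(x) = \prod_{(i,j)\in S}\frac{1-x^{\mo{p_i^{-1}}{p_j}N_j}}{1-x^{N_j}}\cdot\prod_{(j,i)\in S}\frac{x^N - x^{\mo{p_i^{-1}}{p_j}N_j}}{1-x^{N_j}}$. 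The first term is $\prod_{1\le i<j\le n}(1-x^{N_{ij}}) / \prod_i (1-x^{N_i})$, and both terms have poles only at roots of unity whose order is a product of a subset of the $p_i$'s, so it suffices to check that all such poles cancel — equivalently, that the combined rational function is regular at every primitive $d$-th root of unity for each squarefree divisor $d\mid N$.

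First I would handle the roots of order $d$ where $d$ is a product of at most $n-2$ of the primes: here the numerator $\prod_{i<j}(1-x^{N_{ij}})$ already vanishes to high enough order (this is essentially the computation in Proposition~1 of the excerpt), so the first term alone is regular there, and one checks each $A_i(x)$ is also regular there because the denominators $1-x^{N_j}$ contributing a pole are matched by factors in the product $\prod_{j_1<j_2\ne i}(1-x^{N_{j_1j_2}})$ — a routine binomial count of the same flavor as Proposition~1. The substantive cases are $d = N$ (primitive $N$-th roots) and $d = N_i = N/p_i$ for each $i$. At a primitive $N$-th root of unity $\zeta$, every $1-\zeta^{N_{ij}}$ and $1-\zeta^{N_i}$ is nonzero, so everything in sight is automatically regular; the only real content is at $d = N_\ell$.

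Fix $\ell$ and let $\zeta$ be a primitive $N_\ell$-th root of unity, so $\zeta^{N_\ell}=1$ but $\zeta^{N_{\ell j}}\ne 1$ for $j\ne\ell$ and $\zeta^{N_j}\ne 1$ for $j\ne\ell$ (since $p_\ell\mid N_j$ iff $j\ne\ell$... careful: $\zeta^{N_j}=1$ iff $N_\ell\mid N_j$ iff $p_j\mid N_\ell\cdot$, i.e. $\zeta^{N_j}=1$ precisely when $j=\ell$). Thus in the first term $1/\prod_i(1-x^{N_i})$ only the factor $i=\ell$ is singular, giving a simple pole, while the numerator $\prod_{i<j}(1-x^{N_{ij}})$ is nonzero at $\zeta$ — so the first term has a simple pole. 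In the sum, the $i=\ell$ summand contributes no pole at $\zeta$ from its leading $1/(1-x^{N_\ell})$ but — wait, it does: that is exactly the term $\frac{1-x^N}{1-x^{N_\ell}}$, whose pole at $\zeta$ must be cancelled inside $A_\ell(x)\prod_{j_1<j_2\ne\ell}(1-x^{N_{j_1j_2}})$, and $A_\ell$ has denominators $1-x^{N_j}$ for $j\ne\ell$ which are all nonzero at $\zeta$, so $A_\ell(\zeta)$ is finite and the $i=\ell$ summand genuinely has a simple pole at $\zeta$. Meanwhile, for $i\ne\ell$, the summand's denominator is $(1-x^{N_i})\prod_{(i,j)\in S \text{ or } (j,i)\in S}(1-x^{N_j})$; the factor $1-x^{N_\ell}$ appears here (as $N_j$ with $j=\ell$, since $S$ contains one of $(i,\ell),(\ell,i)$), so each such summand also has a pole at $\zeta$. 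The crux is to show the residues all sum to zero. The clean way is to observe that, since we are checking regularity of a polynomial-valued quantity, it suffices to work modulo $1-x^{N_\ell}$ (i.e., in the ring of functions on the $N_\ell$-th roots of unity after clearing the simple pole), and there use the generalized Lemma~\ref{pq+1} identity $\sum_j [(1-x^{\mo{p_\ell^{-1}}{p_j}N_j}) \text{ or } (x^N - x^{\mo{p_\ell^{-1}}{p_j}N_j})]$-type telescoping; more precisely, I expect the residue sum to collapse by the same mechanism that makes $P_N(x)$ itself a polynomial, namely an inclusion–exclusion over which $p_j$'s divide the order of the root. The main obstacle, and the step I would budget the most care for, is exactly this residue bookkeeping at $d=N_\ell$: matching the residue of the first term $\prod_{i<j}(1-x^{N_{ij}})/\prod_i(1-x^{N_i})$ against the alternating sum of residues of the $n$ summands, which amounts to proving a polynomial identity modulo $1-x^{N_\ell}$ that factors through the prime $p_\ell$ only and reduces — after dividing out common nonvanishing factors — to an $(n-1)$-variable analogue of Lemma~\ref{pq+1}. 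I would prove that auxiliary identity by induction on $n$, peeling off one prime at a time, which is why I'd expect the whole proposition to ultimately follow by a short induction once the $n=2$ base case (Lemma~\ref{pq+1}) is in hand.
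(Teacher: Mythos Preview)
Your overall strategy---clear the factor $1-x^N$ and check regularity at every root of unity of squarefree order $d\mid N$---matches the paper's, but you have inverted which cases are substantive. The case $d=N_\ell$ is in fact the easy one: every factor appearing in $A_i(x)$ is already a polynomial (both $1-x^{cN_j}$ and $x^N-x^{cN_j}$ are divisible by $1-x^{N_j}$), so for $i\ne\ell$ the summand has no pole at a primitive $N_\ell$-th root of unity---you overlooked that the numerator of the $j=\ell$ factor vanishes along with its denominator. Only the $i=\ell$ summand carries a pole, and its residue matches that of the first term directly via the congruence $\mo{p_\ell^{-1}}{p_j}N_j\equiv N_{\ell j}\pmod{N_\ell}$; there is no residue sum to telescope and no $(n-1)$-variable analogue of Lemma~\ref{pq+1} to prove by induction.

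The genuine gap is that you dismissed the case where $d$ has exactly $n-2$ prime factors, say $d=N_{ab}$. There your first term $\prod_{i<j}(1-x^{N_{ij}})/\prod_i(1-x^{N_i})$ is \emph{not} regular: the denominator vanishes to order $2$ (the factors $i=a$ and $i=b$) while the numerator vanishes only to order $1$ (the single pair $\{a,b\}$), so there is a simple pole. In the sum, the summands with $i\ne a,b$ vanish at such a root because they contain the factor $1-x^{N_{ab}}$, so one must show that the poles of the $i=a$ and $i=b$ summands together cancel the pole of the first term. After cancelling the common nonvanishing factors this reduces precisely to Lemma~\ref{pq+1} applied to the pair $p_a,p_b$ in the variable $y=x^{N_{ab}}$---and this is exactly where the paper invokes that lemma. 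Your case split folds this into the ``routine binomial count'' and thereby skips the one step that carries the content of the proof.
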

\begin{proof}
Note that the denominator of each term on the left is the same as the denominator of $P_N(x)$. Therefore it suffices to show that, modulo the denominator $\prod_{i=1}^n \left(1-x^{N_i}\right)$,
\[
\sum_{i=1}^n \biggl(
\prod_{(i,j) \in S}\left(1-x^{\mo{p_i^{-1}}{p_j}\cdot N_j}\right)\cdot
\prod_{(j,i) \in S}\left(x^N-x^{\mo{p_i^{-1}}{p_j}\cdot N_j}\right)\cdot
\prod_{\substack{1 \leq j_1 < j_2 \leq n\\j_1,j_2 \neq i}}\left(1-x^{N_{j_1j_2}}\right)
\biggr)
\equiv
\prod_{1 \leq j_1 < j_2 \leq n}\left(1-x^{N_{j_1j_2}}\right).
\]
Note that $\prod_{i=1}^n \left(1-x^{N_i}\right)$ is the least common multiple of $\left(1-x^{N_{i_1i_2\dots i_m}}\right)^m$ as $\{i_1, \dots, i_m\}$ ranges over nonempty subsets of $[n]$ because any root of order $N_{i_1\dots i_m}$ appears $m$ times. Therefore it suffices to check that the congruence holds modulo each $\left(1-x^{N_{i_1\dots i_m}}\right)^m$.

Note that $1-x^{N_{i_1\dots i_m}}$ divides the $i$th term on the left side $(m-1)+\binom{m-1}{2} = \binom{m}{2}$ times when $i \in \{i_1, \dots, i_m\}$ and $m+\binom{m}{2} = \binom{m+1}{2}$ times otherwise. It also divides the right side $\binom{m}{2}$ times.

When $m=1$, $1-x^{N_{i_1}}$ divides all the terms on the left except when $i=i_1$. Reducing the exponents in that term modulo $N_i$, it suffices to observe that \[\mo{p_i^{-1}}{p_j} \cdot N_j = N_i\cdot\frac{p_i\mo{p_i^{-1}}{p_j}-1}{p_j}+N_{ij} \equiv N_{ij} \pmod{N_i},\]
so each factor is congruent to the corresponding factor on the right.

When $m=2$, both sides are divisible by $1-x^{N_{i_1i_2}}$, and all but the $i_1$th and $i_2$th terms on the left are divisible by its square. Let $y = x^{N_{i_1i_2}}$, and without loss of generality, assume $(i_1, i_2) \in S$. In the $i_1$th term, the factor corresponding to $(i_1, i_2)$ is divisible by $1-y$, and any factor corresponding to $(i_1, j)$ or $(j, i_1)$ is congruent modulo $1-y$ to $1-x^{N_{i_1j}}$ by the calculation in the previous paragraph. Similarly reducing the $i_2$th term and ignoring the common factors on both sides, we find that it suffices to show that
\[(1-y^{p_{i_1}\mo{p_{i_1}^{-1}}{p_{i_2}}}) + (y^{p_{i_1}p_{i_2}} - y^{p_{i_2}\mo{p_{i_2}^{-1}}{p_{i_1}}}) \equiv 1-y \pmod{(1-y)^2}.\]
This follows from Lemma~\ref{pq+1}.

When $m \geq 3$, $\binom{m}{2} \geq m$, so both sides are divisible by $\left(1-x^{N_{i_1\dots i_m}}\right)^m$. This completes the proof.
\end{proof}

Using Proposition~\ref{pn}, we may now state a result similar to Proposition~\ref{pq-coeffs}. Let us write $\overline{P}_N(x)$ for the reduction of $P_N(x)$ modulo $1-x^N$ (so $\overline{P}_N(x)$ has degree less than $N$).
\begin{thm} \label{pn-coeffs}
Let $S$ be as in Proposition~\ref{pn}, and let 
\[
f_i(k)=\prod_{(i,j) \in S} \{\mo{kN_j^{-1}}{p_j} < \mo{p_i^{-1}}{p_j}\} \cdot
\prod_{(j,i) \in S}-\{\mo{kN_j^{-1}}{p_j} \geq \mo{p_i^{-1}}{p_j}\}.
\]
The coefficient of $x^k$ in $\overline{P}_N(x)$ is
\[
\sum_{i=1}^n \sum_{A \subset A_i} (-1)^{|A|}f_i(k-N_A),
\]
where $A_i$ is the set of all two-element subsets of $[n] \backslash \{i\}$ and $N_A = \sum_{\{j_i,j_2\} \in A} N_{j_1j_2}$. In particular, if $\deg P_N<N$, then this is also the coefficient of $x^k$ in $P_N(x)$.
\end{thm}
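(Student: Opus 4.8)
The plan is to expand the polynomial from Proposition~\ref{pn} and read off the coefficient of $x^k$ term by term, exactly mimicking the passage from Proposition~\ref{pq} to Proposition~\ref{pq-coeffs}. Fix the set $S$. For each $i$, the $i$th summand in Proposition~\ref{pn} is a product of three pieces: the geometric-series factor $\frac{1-x^N}{1-x^{N_i}} = 1 + x^{N_i} + x^{2N_i} + \dots + x^{(p_i-1)N_i}$; the ``residue'' factors indexed by pairs $(i,j)\in S$ and $(j,i)\in S$; and the ``extra'' product $\prod_{j_1<j_2,\ j_1,j_2\neq i}(1-x^{N_{j_1j_2}})$. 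I would first treat the product of the geometric factor with the residue factors, show that modulo $1-x^N$ its coefficient of $x^k$ is precisely $f_i(k)$, and then account for the extra product by inclusion-exclusion, which is what produces the sum over $A\subset A_i$ with signs $(-1)^{|A|}$ and shifts $k\mapsto k-N_A$.

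The heart of the computation is the first piece. For a pair $(i,j)\in S$, write $\frac{1-x^{\mo{p_i^{-1}}{p_j}\cdot N_j}}{1-x^{N_j}} = 1 + x^{N_j} + \dots + x^{(\mo{p_i^{-1}}{p_j}-1)N_j}$, a sum of powers $x^{aN_j}$ with $0\le a < \mo{p_i^{-1}}{p_j}$; for $(j,i)\in S$, similarly expand $\frac{x^N - x^{\mo{p_i^{-1}}{p_j}\cdot N_j}}{1-x^{N_j}} = x^{\mo{p_i^{-1}}{p_j}\cdot N_j} + x^{(\mo{p_i^{-1}}{p_j}+1)N_j} + \dots + x^{(p_j-1)N_j}$, i.e. powers $x^{aN_j}$ with $\mo{p_i^{-1}}{p_j}\le a < p_j$, carrying the sign $-1$ that I will factor out once per such pair. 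Multiplying all of these together with the geometric factor $\sum_{a_i} x^{a_i N_i}$, $0\le a_i<p_i$, gives a sum of monomials $x^{\sum_j a_j N_j}$ where the index $j$ ranges over all of $[n]$ (with $j=i$ coming from the geometric factor, constrained only by $0\le a_i<p_i$). Since the exponents $\{N_j\}_{j\in[n]}$ and the moduli $\{p_j\}$ satisfy $\mo{(\sum_j a_j N_j)N_{j_0}^{-1}}{p_{j_0}} = a_{j_0}$ for each $j_0$ (because $N_j\equiv 0\pmod{p_{j_0}}$ for $j\neq j_0$ and $N_{j_0}$ is invertible mod $p_{j_0}$) — this is the multivariate Chinese Remainder Theorem analogue of the identity $\mo{(ap+bq)p^{-1}}q=a$ used in Proposition~\ref{pq-coeffs} — the map $(a_1,\dots,a_n)\mapsto \sum_j a_j N_j \bmod N$ is a bijection onto $\{0,1,\dots,N-1\}$. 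Hence the coefficient of $x^k$ in this product (mod $1-x^N$) is nonzero exactly when $k$'s residue data $a_{j_0} = \mo{kN_{j_0}^{-1}}{p_{j_0}}$ satisfies $a_j < \mo{p_i^{-1}}{p_j}$ for every $(i,j)\in S$ and $a_j\ge \mo{p_i^{-1}}{p_j}$ for every $(j,i)\in S$ (there is no constraint from $j=i$), and in that case it equals the product of the signs, which is exactly $f_i(k)$.

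Having identified the coefficient of $x^k$ in the geometric-times-residue piece as $f_i(k)$, multiplying by $\prod_{\{j_1,j_2\}\in A_i}(1 - x^{N_{j_1j_2}})$ and expanding the product gives $\sum_{A\subset A_i}(-1)^{|A|} x^{N_A}$ times that piece; taking coefficients of $x^k$ and using that reduction mod $1-x^N$ commutes with extracting coefficients (all exponents involved are $<2N$, or one can simply work in $\mathbf{Z}[x]/(1-x^N)$ throughout), the coefficient of $x^k$ in the $i$th summand is $\sum_{A\subset A_i}(-1)^{|A|} f_i(k-N_A)$, with indices read modulo $N$ inside $f_i$; summing over $i$ gives the claimed formula for the coefficient of $x^k$ in $\overline{P}_N(x)$. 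The final sentence follows immediately: if $\deg P_N < N$ then $P_N = \overline{P}_N$, so the same formula computes the coefficients of $P_N(x)$ itself. The only genuinely delicate point is the bijectivity claim in the previous paragraph and the bookkeeping that the geometric factor contributes the ``free'' coordinate $a_i$ with no inequality constraint — once that is set up cleanly, everything else is routine expansion of finite geometric series and inclusion-exclusion.
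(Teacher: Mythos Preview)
Your argument is correct and follows essentially the same route as the paper: expand the $i$th summand of Proposition~\ref{pn} as a signed sum of monomials $x^{\sum_j a_j N_j}$, use the CRT identification $a_j = \mo{kN_j^{-1}}{p_j}$ to recognize the coefficient of $x^k$ modulo $1-x^N$ as $f_i(k)$, and then expand the remaining product $\prod_{\{j_1,j_2\}\in A_i}(1-x^{N_{j_1j_2}})$ to obtain the inclusion--exclusion sum over $A\subset A_i$. One small slip: the displayed expansion of $\frac{x^N - x^{\mo{p_i^{-1}}{p_j}N_j}}{1-x^{N_j}}$ is missing a minus sign (you acknowledge it immediately afterward), and the parenthetical ``all exponents involved are $<2N$'' is not literally true once several factors $(1-x^{N_{j_1j_2}})$ are multiplied in---but your alternative of working throughout in $\mathbf Z[x]/(1-x^N)$ is the clean fix and is exactly what the paper does implicitly.
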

\begin{proof}
The expression
\[
\frac{1-x^N}{1-x^{N_i}}\cdot
\prod_{(i,j) \in S} \frac{1-x^{\mo{p_i^{-1}}{p_j} \cdot N_j}}{1-x^{N_j}}\cdot
\prod_{(j,i) \in S} \frac{x^N-x^{\mo{p_i^{-1}}{p_j} \cdot N_j}}{1-x^{N_j}}
\]
is the sum of terms $\pm x^k$, where $k=\sum a_jN_j$ such that $0 \leq a_i < p_i$, $0 \leq a_j < \mo{p_i^{-1}}{p_j}$ if $(i,j) \in S$, and $\mo{p_i^{-1}}{p_j} \leq a_j < p_j$ if $(j,i) \in S$, and the sign is given by the parity of the number of $j$ in this last case. Note that modulo $p_j$, $k \equiv a_jN_j$, so $a_j \equiv \mo{kN_j^{-1}}{p_j}$. Thus the above expression modulo $1-x^N$ is just $\sum_{k=0}^{N-1} f_i(k)$. The result now follows easily from Proposition~\ref{pn}.
\end{proof}

If $\overline{P}_N=P_N$, we find the following corollary.

\begin{cor} \label{order}
If $\deg P_N<N$, then the coefficient of $x^k$ in $P_N(x)$ for $k<N$ depends only on, for each $j$, the relative order of the $2^{n-1}+1$ residues $\mo{kN_j^{-1}}{p_j}$ and $\mo{\sum_{j' \in T} p_{j'}^{-1}}{p_j}$ for all $T \subset [n] \backslash\{j\}$.
\end{cor}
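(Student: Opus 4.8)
The plan is to read the corollary directly off the formula in Theorem~\ref{pn-coeffs}. Fix any valid choice of $S$ as in Proposition~\ref{pn}; then (using the hypothesis $\deg P_N<N$) the coefficient of $x^k$ in $P_N(x)$ equals $\sum_{i=1}^n\sum_{A\subset A_i}(-1)^{|A|}f_i(k-N_A)$, and each $f_i(k-N_A)$ is a signed product of indicators of the form $\{\mo{(k-N_A)N_j^{-1}}{p_j}<\mo{p_i^{-1}}{p_j}\}$, or the complementary $\{\mo{(k-N_A)N_j^{-1}}{p_j}\ge\mo{p_i^{-1}}{p_j}\}$, one factor for each $j$ with $(i,j)\in S$ or $(j,i)\in S$. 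So it suffices to show that each such indicator is a function only of the relative order, for the single modulus $p_j$, of the residues listed in the statement.

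The first step is an elementary residue calculation. Since $N_{j_1j_2}\equiv 0\pmod{p_j}$ whenever $j\notin\{j_1,j_2\}$, and $N_{jj'}N_j^{-1}\equiv p_{j'}^{-1}\pmod{p_j}$ (because $N_{jj'}/N_j=1/p_{j'}$), we find, writing $T_A=\{j'\in[n]\setminus\{i,j\}:\{j,j'\}\in A\}$, that
\[\mo{(k-N_A)N_j^{-1}}{p_j}=\mo{\,kN_j^{-1}-\textstyle\sum_{j'\in T_A}p_{j'}^{-1}\,}{p_j}.\]
Setting $a=\mo{kN_j^{-1}}{p_j}$, $u=\mo{\sum_{j'\in T_A}p_{j'}^{-1}}{p_j}$ and $v=\mo{p_i^{-1}+\sum_{j'\in T_A}p_{j'}^{-1}}{p_j}$ — and noting that $u$ and $v$ are among the listed residues, coming from $T=T_A$ and $T=T_A\cup\{i\}\subset[n]\setminus\{j\}$ — this residue equals $\mo{a-u}{p_j}$, while $\mo{p_i^{-1}}{p_j}=\mo{v-u}{p_j}$, and $v\ne u$ since $\mo{p_i^{-1}}{p_j}\ne 0$.

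The heart of the argument is then the observation that for integers $0\le a,u,v<m$ with $u\ne v$, the inequality $\mo{a-u}{m}<\mo{v-u}{m}$ holds precisely when $a$ lies in the cyclic interval from $u$ (inclusive) to $v$ (exclusive), and whether this happens is determined by the relative order of the triple $a,u,v$ inside $\{0,\dots,m-1\}$ (treating the cases $u<v$ and $u>v$ separately; note $a=u$ makes it true and $a=v$ makes it false). Applying this with $m=p_j$ and combining with the previous paragraph, each indicator occurring in $f_i(k-N_A)$, and hence the entire coefficient of $x^k$, is a function of, for each $j$, the relative order of the $2^{n-1}+1$ residues $\mo{kN_j^{-1}}{p_j}$ and $\mo{\sum_{j'\in T}p_{j'}^{-1}}{p_j}$ over all $T\subset[n]\setminus\{j\}$, as claimed.

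I expect the only real obstacle to be making the cyclic-interval observation watertight, in particular handling ties — which is why "relative order" has to be understood in the weak sense that records which of the residues coincide, since coincidences of the form $a=u$ or $a=v$ genuinely flip the indicators. The residue arithmetic and the reduction to Theorem~\ref{pn-coeffs} are routine bookkeeping.
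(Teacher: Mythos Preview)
Your proposal is correct and follows essentially the same route as the paper's proof: both reduce to Theorem~\ref{pn-coeffs}, compute $\mo{(k-N_A)N_j^{-1}}{p_j}$ in terms of $\mo{kN_j^{-1}}{p_j}$ and $\mo{\sum_{j'\in T}p_{j'}^{-1}}{p_j}$, and then observe that each indicator in $f_i(k-N_A)$ is determined by the relative order of $\mo{kN_j^{-1}}{p_j}$ and the two residues corresponding to $T=T_A$ and $T=T_A\cup\{i\}$. You spell out the cyclic-interval step more carefully than the paper does, but the argument is the same.
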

\begin{proof}
Observe that $f_i(k)$ depends only on the relative order of $\mo{kN_j^{-1}}{p_j}$, $0$, and $\mo{p_i^{-1}}{p_j}$ for each $j$. For any $A \subset A_i$ as in Theorem~\ref{pn-coeffs}, consider $k'=k-N_A$. We have $\mo{N_{j_1j_2}N_j^{-1}}{p_j}=0$ if $j \notin \{j_1, j_2\}$, while if instead $\{j_1, j_2\} = \{j,j'\}$, then $\mo{N_{jj'}N_j^{-1}}{p_j} = \mo{p_{j'}^{-1}}{p_j}$. Let $T_j=\{j' \mid \{j,j'\} \in A\}$. Then $k'N_j^{-1} \equiv kN_j^{-1}-\sum_{j' \in T_j}p_{j'}^{-1} \pmod{p_j}$. Thus $f_i(k')$ depends only on the relative order of $\mo{kN_j^{-1}}{p_j}$, $\mo{\sum_{j' \in T_j}p_{j'}^{-1}}{p_j}$, and $\mo{\sum_{j'\in T_j \cup \{i\}} p_{j'}^{-1}}{p_j}$ for each $j$. Considering all possible $A$ and $i$ and using Theorem~\ref{pn-coeffs} gives the result.
\end{proof}

In particular, these coefficients do not even depend on the specific primes $p_i$ as long as the order of the residues is given. We will say $N$ is \emph{generic} if $\deg P_N<N$ and all the $\mo{\sum_{j \in T} p_j^{-1}}{p_i}$ are distinct for any fixed $i$. In this case, it follows that if we plot the integers $0 \leq k <n$ at $(\mo{kN_1^{-1}}{p_1}, \dots, \mo{kN_n^{-1}}{p_n})$, there exists an analogous diagram to Figure~\ref{2d} with $n$ dimensions and $2^{n(n-1)}$ regions. We will investigate this diagram in the case $n=3$ in more detail in the next section.

The condition that $\deg P_N < N$ is fairly weak in that it holds always for small $n$ and ``most'' of the time for large $n$.

\begin{prop} \label{176}
If $\sum_{i=1}^n \frac{1}{p_i} < \frac{2n}{n-1}$, then $\deg P_N<N$. In particular, this holds if either $n<176$ or every prime $p_i$ is at least $\frac{n-1}{2}$.
\end{prop}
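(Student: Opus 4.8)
The plan is to convert $\deg P_N<N$ into an elementary inequality among the reciprocals of the $p_i$, deduce the stated sufficient condition from one use of the power--mean inequality, dispatch the clause about $p_i\ge\frac{n-1}{2}$ directly, and then treat the clause $n<176$ --- which, as it turns out, does \emph{not} follow from the $\frac{2n}{n-1}$ bound --- by a short monotonicity reduction together with a finite computation.

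First I would read off the degree from the definition: the numerator of $P_N(x)$ has degree $N+\sum_{i<j}N_{ij}$ and the denominator has degree $\sum_i N_i$, and since $P_N(x)$ is a polynomial whose leading coefficient is $\pm1$, we get $\deg P_N=N+\sum_{i<j}N_{ij}-\sum_i N_i$. Dividing by $N$, the condition $\deg P_N<N$ is equivalent to
\[\sum_{1\le i<j\le n}\frac{1}{p_ip_j}<\sum_{i=1}^n\frac1{p_i}.\]
Write $s=\sum_i 1/p_i$. Then $\sum_{i<j}1/(p_ip_j)=\tfrac12\bigl(s^2-\sum_i 1/p_i^2\bigr)$, and the power--mean bound $\sum_i 1/p_i^2\ge s^2/n$ gives $\sum_{i<j}1/(p_ip_j)\le\frac{n-1}{2n}s^2$. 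If $s<\frac{2n}{n-1}$ then $\frac{n-1}{2n}s^2<s$, which is exactly the displayed inequality, proving the first assertion. The clause ``$p_i\ge\frac{n-1}{2}$'' is now immediate: then each $1/p_i\le\frac2{n-1}$, and since the $p_i$ are distinct they cannot all equal $\frac{n-1}{2}$, so $s<n\cdot\frac2{n-1}=\frac{2n}{n-1}$.

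For the clause $n<176$ one must do more, because for $N$ the product of the first $175$ primes one has $s\approx2.2$, well above $\frac{2\cdot175}{174}\approx2.01$; the point is that for distinct $p_i$ the quantity $\sum_i 1/p_i^2$ is comparatively large. Set $\phi(\mathbf p)=\sum_i 1/p_i-\sum_{i<j}1/(p_ip_j)$, so $\deg P_N<N\iff\phi(\mathbf p)>0$, and note $\phi(\mathbf p)=\tfrac12\bigl(s(2-s)+\sum_i 1/p_i^2\bigr)$, which is positive whenever $s\le2$. Assume then $s>2$. If one replaces a single prime by a smaller one, raising the corresponding reciprocal from $a$ to $a'$, a short computation shows $\phi$ changes by $(a'-a)(1-s+a)$, which is negative since $a'>a$ and $s>2>1+a$. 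Ordering $p_1<\dots<p_n$ so that $p_i\ge q_i$ (the $i$th prime), one can pass from $(p_1,\dots,p_n)$ to $(q_1,\dots,q_n)$ by a sequence of such single-prime replacements (while the current set is not $\{q_1,\dots,q_n\}$, replace some element exceeding an omitted $q_j$ by $q_j$); each step raises $s$, so we stay in the regime $s>2$ and each step can only decrease $\phi$. Hence $\phi(\mathbf p)\ge\phi(q_1,\dots,q_n)$, and it suffices to verify $\phi(q_1,\dots,q_n)>0$, i.e.\ $\deg P_{q_1\cdots q_n}<q_1\cdots q_n$, for each $n\le175$; this is a finite computation, and $n=176$ is precisely where it fails, which is the source of the constant.

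The main obstacle is the clause $n<176$: recognizing that it is not a corollary of the $\frac{2n}{n-1}$ bound and must instead exploit distinctness of the primes, and arranging the monotonicity reduction so that it is invoked only in the regime $s>2$ (the regime $s\le2$ being handled trivially by the identity for $\phi$). The closing numerical verification is routine but genuinely needed, as $176$ is the true threshold rather than an artifact of a lossy estimate.
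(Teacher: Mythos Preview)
Your argument is correct and matches the paper's approach closely: the same degree computation, the same quadratic bound (the paper phrases it as Maclaurin's inequality, you as the power--mean/QM--AM inequality, but these are identical here), the same direct check for $p_i\ge\frac{n-1}{2}$, and the same monotonicity-in-each-prime reduction to the product of the first $n$ primes for the clause $n<176$, followed by a finite verification. Your write-up is a bit more explicit than the paper's about why the monotonicity step is valid throughout the replacement process (you separate out the trivial $s\le2$ regime and note that $s$ only increases along the way), and you helpfully point out that the $\frac{2n}{n-1}$ bound alone does not cover $n<176$; but the substance is the same.
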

\begin{proof}
Let $A=\sum_{i=1}^n \frac{1}{p_i}$. By Maclaurin's inequality,
\[
1-\frac {\deg P_N}N = A-\sum_{1 \leq i < j \leq n} \frac{1}{p_ip_j}>A-\frac{n-1}{2n}\cdot A^2.
\]
Thus if $A< \frac{2n}{n-1}$, then $N>\deg P_N$.

If every prime $p_i$ is at least $\frac{n-1}{2}$, then clearly $A < \frac{2n}{n-1}$.

Suppose $\deg P_N\geq N$. We claim that $\deg P_{N'} \geq N'$, where $N'$ is the product of the first $n$ primes. It suffices to check that $A-\sum \frac{1}{p_ip_j}$ decreases if we reduce any $p_i$ and keep the others fixed. The coefficient of $\frac{1}{p_i}$ in this expression is $1-\sum_{j\neq i} \frac{1}{p_j} = 1+\frac{1}{p_i}-A$, which is negative since $A\geq \frac{2n}{n-1} > 2 > 1+\frac{1}{p_i}$, proving the claim. Therefore it suffices to check that whenever $N$ is the product of the first $n<176$ primes, $\deg P_N<N$. This follows from a straightforward computer calculation.
\end{proof}

The smallest $N$ for which $\deg P_N \geq N$ is the product of the first 176 primes, roughly $2.4182 \times 10^{439}$.


One can use Theorem~\ref{pn-coeffs} to give a bound on the largest absolute value of a coefficient of $P_N(x)$.

\begin{prop} \label{bound}
If $\deg P_N < N$, then every coefficient of $P_N(x)$ has absolute value at most $n\cdot 2^{\binom{n-2}{2}-1}$.
\end{prop}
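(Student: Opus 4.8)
The plan is to bound the coefficient of $x^k$ in $P_N(x)$ directly using the formula from Theorem~\ref{pn-coeffs}, namely $\sum_{i=1}^n \sum_{A \subset A_i} (-1)^{|A|} f_i(k-N_A)$, by choosing the auxiliary set $S$ cleverly so that most of the terms in the inner alternating sum cancel. First I would observe that $|f_i(k)| \leq 1$ always, since each factor of $f_i$ is $0$ or $\pm 1$; a naive bound then gives $\sum_i |A_i|$-many terms, i.e. $n \cdot 2^{\binom{n-1}{2}}$, which is too weak. The point is to do better by fixing one index, say $j_0 \neq i$, and pairing up subsets $A \subset A_i$ according to whether they contain a given two-element set $\{j_0, j_1\}$.

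The key step is the following cancellation. Fix $i$, and fix some $j_1 \in [n]\setminus\{i\}$, and consider the pairs $A, A \cup \{\{j_0,j_1\}\}$ where $j_0$ is also some fixed element of $[n]\setminus\{i\}$ distinct from $j_1$ (so this pairing handles the presence or absence of one particular two-element subset). By Theorem~\ref{pn-coeffs}, $f_i(k - N_A) - f_i(k - N_{A \cup \{\{j_0,j_1\}\}})$ differs only in the factor indexed by $j_1$ (and possibly $j_0$): adding $N_{j_0 j_1}$ to $N_A$ shifts $\mo{kN_{j_1}^{-1}}{p_{j_1}}$ by $\mo{p_{j_0}^{-1}}{p_{j_1}}$ and shifts $\mo{kN_{j_0}^{-1}}{p_{j_0}}$ by $\mo{p_{j_1}^{-1}}{p_{j_0}}$, while leaving all other factors unchanged. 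So the difference of the two $f_i$ values is nonzero only when the shift in coordinate $j_1$ (or $j_0$) crosses the threshold $\mo{p_i^{-1}}{p_{j_1}}$ (resp. $\mo{p_i^{-1}}{p_{j_0}}$) — this happens for a controlled fraction of residues. Iterating this pairing over a well-chosen set of two-element subsets — essentially choosing $S$ so that we can successively eliminate the dependence on roughly $\binom{n-1}{2} - \binom{n-2}{2} = n-2$ of the coordinates for each $i$ — reduces the $2^{\binom{n-1}{2}}$ terms down to something like $2^{\binom{n-2}{2}}$ surviving terms, each still bounded by $1$ in absolute value; summing over $i$ and accounting for one extra factor of $\tfrac12$ from a final halving of the last coordinate gives the claimed bound $n \cdot 2^{\binom{n-2}{2}-1}$.

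The main obstacle will be organizing the cancellation argument so that the bookkeeping is correct: one needs the freedom in choosing $S$ (which orientation of each pair $(i,j)$ vs $(j,i)$ to include) to be used consistently across all $i$ simultaneously, and one needs to verify that the "shift crosses a threshold" events across the different coordinates being eliminated are genuinely independent enough that the count of surviving terms multiplies down as claimed rather than merely additively. I would handle this by setting up, for each $i$, a spanning-tree-like structure on $[n]\setminus\{i\}$ (so $n-2$ edges, matching the exponent drop from $\binom{n-1}{2}$ to $\binom{n-2}{2}$), pairing subsets along each tree edge in turn, and checking at each stage that the residual sum telescopes. The factor of $n$ is just the sum over $i$, and the final $\tfrac12$ should come from symmetry between $k$ and $N - k$ or from one more application of the pairing to the last remaining coordinate; if that last halving turns out to be delicate, the fallback is to prove the slightly weaker bound $n \cdot 2^{\binom{n-2}{2}}$, which is still exponential in $n^2$ and suffices for the qualitative conclusion that $M(n)$ exists.
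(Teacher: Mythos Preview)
Your approach has a real gap: the cancellation mechanism you describe does not lead to the claimed count of surviving terms. Pairing $A$ with $A\cup\{\{j_0,j_1\}\}$ shifts \emph{two} of the $n-1$ residue coordinates (those for $j_0$ and $j_1$), and the difference $f_i(k-N_A)-f_i(k-N_{A\cup\{\{j_0,j_1\}\}})$ vanishes only when \emph{neither} shift crosses its threshold. For a fixed $k$ this threshold-crossing pattern varies with $A$, so after one pairing you do not simply halve the number of nonzero terms; you replace each pair by a difference that may or may not vanish, and the surviving terms are not indexed by subsets of a smaller $A_i'$ in any clean way. Iterating along a spanning tree does not telescope: the shifts from different tree edges overlap in the same coordinates, so the events are not independent, and there is no mechanism forcing the count down from $2^{\binom{n-1}{2}}$ to $2^{\binom{n-2}{2}}$. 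There is also a bookkeeping confusion in the proposal: the exponent $\binom{n-1}{2}$ counts two-element subsets of $[n]\setminus\{i\}$, not coordinates, so ``eliminating $n-2$ coordinates'' is not the same as dropping $n-2$ elements of $A_i$.

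The paper's argument sidesteps all of this by exploiting the freedom in $S$ in a different way: rather than fixing one clever $S$, it \emph{averages} the formula of Theorem~\ref{pn-coeffs} over all $2^{\binom n2}$ choices of $S$. Because $f_i^S(k)$ is a product over $j\ne i$ of factors each of which is, for a given $k$, either $\{0,1\}$ or $\{0,-1\}$ depending on the orientation of $(i,j)$, the average $g_i(k)=2^{-\binom n2}\sum_S f_i^S(k)$ factors and has absolute value exactly $2^{-(n-1)}$ for every $k$. The trivial triangle-inequality bound on $\sum_i\sum_{A\subset A_i}(-1)^{|A|}g_i(k-N_A)$ then gives $n\cdot 2^{|A_i|}\cdot 2^{-(n-1)}=n\cdot 2^{\binom{n-2}{2}-1}$ immediately. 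The savings come not from cancellation among the $A$'s but from shrinking each individual term; that is the idea your proposal is missing.
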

\begin{proof}
Let us denote by $f_i^S$ the function defined in Theorem~\ref{pn-coeffs} corresponding to the set $S$. Then since the theorem holds for all $f_i^S$ (and the expression for the coefficients is linear in the $f_i^S$), it will also hold for the average $g_i = 2^{-\binom{n}{2}} \sum_S f_i^S$. Note that $|g_i(k)| = 2^{-(n-1)}$ for all $k$. Then
\[\left\lvert \sum_{i=1}^n \sum_{A \subset A_i} (-1)^{|A|}g_i(k-N_A) \right\rvert \leq
n \cdot 2^{|A_i|} \cdot 2^{-(n-1)} = n \cdot 2^{\binom{n-2}{2}-1}.\qedhere\]
\end{proof}

Although this bound looks rather weak, it is in fact fairly tight as we shall see in Section~5. As a special case, we can apply it when $n=3$ to recover the result of Gallot and Moree \cite{GallotMoree}.

\begin{cor}
$M(3)=1$. In other words, all of the coefficients of $(1-x)\Phi_{pqr}(x)$ have absolute value at most 1.
\end{cor}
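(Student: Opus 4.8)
The plan is to obtain this as an immediate consequence of Proposition~\ref{bound} together with an integrality observation. First I would check that the hypothesis $\deg P_N < N$ is satisfied when $n=3$: this is immediate from Proposition~\ref{176}, since $3 < 176$. (Alternatively one can observe directly that for any three distinct primes $p,q,r$ we have $\frac1p+\frac1q+\frac1r \leq \frac12+\frac13+\frac15 < 2 < 3 = \frac{2n}{n-1}$, so Proposition~\ref{176} applies.)

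With the hypothesis in hand, Proposition~\ref{bound} tells us that every coefficient of $P_N(x)$ has absolute value at most $n\cdot 2^{\binom{n-2}{2}-1}$, which for $n=3$ equals $3\cdot 2^{\binom{1}{2}-1} = 3\cdot 2^{-1} = \frac32$. The key final step is to invoke the fact (from the first Proposition of Section~2) that $P_N(x)$ has integer coefficients: an integer of absolute value at most $\frac32$ has absolute value at most $1$. Hence the height of $P_{pqr}(x) = (1-x)\Phi_{pqr}(x)$ is at most $1$, i.e.\ $M(3) \leq 1$.

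For the matching lower bound, I would note that $P_{pqr}(x)$ is a nonzero polynomial — for instance its constant term is $1$ — so $M(3) \geq 1$, and therefore $M(3) = 1$. There is no real obstacle here, since all the substantive work is already packaged into Proposition~\ref{bound}; the only point worth emphasizing is the integrality argument, which is exactly what converts the a priori fractional bound $\frac32$ coming from the averaging in Proposition~\ref{bound} into the sharp integer bound $1$. (One could also remark that this reproves the Gallot--Moree result that adjacent coefficients of $\Phi_{pqr}(x)$ differ by at most $1$.)
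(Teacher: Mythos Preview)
Your proposal is correct and follows essentially the same approach as the paper: invoke Proposition~\ref{176} to verify the hypothesis of Proposition~\ref{bound}, obtain the bound $\frac32$, and then use integrality of the coefficients to conclude $M(3)\leq 1$. The paper's proof is terser (it does not spell out the lower bound $M(3)\geq 1$), but your additions are harmless and accurate.
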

\begin{proof}
Proposition~\ref{bound} (along with Proposition~\ref{176}) gives $M(3) \leq \frac 32$.
\end{proof}

Applying Proposition~\ref{bound} for $n=4$ gives $M(4) \leq 4$. We will show that in fact $M(4)=2$ in the next section.

We will need the following alternate descriptions of the coefficients of $P_N(x)$.

\begin{prop}\label{minus-n}
Let $p$ be a prime not dividing $N$, and let $a_N(k)$ be the coefficient of $x^k$ in $P_N(x)$. For any $T \subset [n]$, write $N_T=\sum_{i\in T}N_i$. Then
\[a_{pN}(k)-a_{pN}(k-N) = \sum (-1)^{|T|}a_N(p^{-1}(k-N_T)),\]
where the sum ranges over all $T \subset [n]$ for which $k \equiv N_T \pmod{p}$ (or, equivalently, for which $kN^{-1} \equiv \sum_{i \in T} \mo{p_i^{-1}}{p} \pmod{p}$).
\end{prop}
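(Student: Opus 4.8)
The plan is to reduce the statement to the single polynomial identity
\[(1-x^N)\,P_{pN}(x)=\Bigl(\prod_{i=1}^n(1-x^{N_i})\Bigr)P_N(x^p)\]
and then compare the coefficients of $x^k$ on the two sides.

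First I would prove this identity directly from the definition. When we form $P_{pN}(x)$ for the $n+1$ primes $p,p_1,\dots,p_n$, the index pairs $\{p_i,p_j\}$ with $1\le i<j\le n$ contribute factors $1-x^{pN_{ij}}$ to the numerator, the pairs $\{p,p_i\}$ contribute factors $1-x^{N_i}$, the single primes $p_i$ contribute $1-x^{pN_i}$ to the denominator, and the single prime $p$ contributes $1-x^{N}$. Hence
\[P_{pN}(x)=\frac{(1-x^{pN})\prod_{i<j}(1-x^{pN_{ij}})\,\prod_{i}(1-x^{N_i})}{(1-x^N)\,\prod_{i}(1-x^{pN_i})}.\]
Substituting $x\mapsto x^p$ into the definition of $P_N$ shows $\dfrac{(1-x^{pN})\prod_{i<j}(1-x^{pN_{ij}})}{\prod_i(1-x^{pN_i})}=P_N(x^p)$, and cancelling the factor $1-x^{pN}$ gives the claimed identity as an identity of rational functions. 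Both sides are in fact polynomials (the left because $P_{pN}(x)$ is, the right because $P_N(x)$ is, as established at the start of Section~2), so the identity holds in $\mathbf Z[x]$. The only place requiring care is the bookkeeping of which primes and which pairs land in the numerator versus the denominator when one passes from $N$ to $pN$.

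Next I would extract the coefficient of $x^k$. On the left it is $a_{pN}(k)-a_{pN}(k-N)$. On the right I would expand $\prod_{i=1}^n(1-x^{N_i})=\sum_{T\subseteq[n]}(-1)^{|T|}x^{N_T}$ and $P_N(x^p)=\sum_m a_N(m)x^{pm}$; the coefficient of $x^k$ in the product is the sum of $(-1)^{|T|}a_N(m)$ over all pairs $(T,m)$ with $N_T+pm=k$. For a fixed $T$ such an $m$ exists precisely when $p\mid k-N_T$, i.e.\ $k\equiv N_T\pmod p$, and then $m=p^{-1}(k-N_T)$. Equating the two expressions for the coefficient of $x^k$ yields the proposition. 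For the parenthetical reformulation of the index set, I would note that since $\gcd(N,p)=1$ we may multiply by $N^{-1}$: $k\equiv N_T\pmod p$ iff $kN^{-1}\equiv N_TN^{-1}\equiv\sum_{i\in T}p_i^{-1}\equiv\sum_{i\in T}\mo{p_i^{-1}}{p}\pmod p$.

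I do not expect a serious obstacle here: the argument is a substitution identity followed by routine coefficient extraction, and the only subtlety is getting the combinatorics of the defining product right in the first step.
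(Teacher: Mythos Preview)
Your proposal is correct and is exactly the paper's approach: the paper states the identity $(1-x^N)P_{pN}(x)=P_N(x^p)\prod_{i=1}^n(1-x^{N_i})$ and then simply says ``Computing the coefficient of $x^k$ on both sides gives the result.'' You have just filled in the details the paper omits.
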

\begin{proof}
We can write
\[(1-x^N)P_{pN}(x) = P_N(x^p) \cdot \prod_{i=1}^n (1-x^{N_i}).\]
Computing the coefficient of $x^k$ on both sides gives the result.
\end{proof}

Note that if $N$ is generic, then this sum always has either 0 or 1 term.

\begin{prop} \label{general-coeffs}
Let $p$ be a prime not dividing $N$, and let $a_N(k)$ and $N_T$ be defined as in Proposition~\ref{minus-n}. If $m_k=p^{-1}(k-N\mo{kN^{-1}}{p})$, then
\[a_{pN}(k) - a_{pN}(k-pN) = \sum_{T \subset [n]} (-1)^{|T|} a_N(m_{k-N_T}).\]
\end{prop}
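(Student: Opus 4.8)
The plan is to obtain Proposition~\ref{general-coeffs} by iterating Proposition~\ref{minus-n} across one full period of length $pN$. Starting from the trivial telescoping identity
\[a_{pN}(k) - a_{pN}(k-pN) = \sum_{s=0}^{p-1}\bigl(a_{pN}(k-sN) - a_{pN}(k-(s+1)N)\bigr),\]
I would apply Proposition~\ref{minus-n}, with $k$ replaced by $k-sN$, to each summand on the right. This rewrites the right-hand side as a double sum over $0 \le s \le p-1$ and over those $T \subset [n]$ for which $k - sN \equiv N_T \pmod{p}$, with summand $(-1)^{|T|} a_N\bigl(p^{-1}(k - sN - N_T)\bigr)$.

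The key observation is that, since $N$ is invertible modulo $p$, for each fixed $T \subset [n]$ there is exactly one $s \in \{0,1,\dots,p-1\}$ with $k - sN \equiv N_T \pmod{p}$, namely $s = s_T := \mo{(k-N_T)N^{-1}}{p}$. Hence, as $s$ ranges over $\{0,\dots,p-1\}$, the index sets $\{T : k-sN \equiv N_T \!\!\pmod p\}$ partition the collection of all subsets of $[n]$, each $T$ appearing exactly once. Therefore the double sum collapses to $\sum_{T \subset [n]} (-1)^{|T|} a_N\bigl(p^{-1}(k - s_T N - N_T)\bigr)$.

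It then remains to match the argument with $m_{k-N_T}$. By the definition of $m$, $m_{k-N_T} = p^{-1}\bigl((k-N_T) - N\mo{(k-N_T)N^{-1}}{p}\bigr) = p^{-1}(k - N_T - s_T N)$, which is precisely the argument appearing above (and it is a genuine nonnegative-or-out-of-range integer since $k - N_T \equiv s_T N \pmod p$, so $a_N$ evaluates it in the usual way). Substituting gives the claimed identity. The argument is essentially pure bookkeeping; the only point requiring care is that $m$ is applied to $k - N_T$ rather than to $k$, so that the reduction $\mo{(k-N_T)N^{-1}}{p}$ absorbs the otherwise arbitrary shift $N_T$ and returns it to the residue class modulo $p$ in which Proposition~\ref{minus-n} can be invoked. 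I do not anticipate any genuine obstacle beyond verifying this partition-of-$T$'s claim, which follows immediately from invertibility of $N$ modulo $p$.
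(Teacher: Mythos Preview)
Your proposal is correct and is essentially identical to the paper's own proof: the paper also telescopes $a_{pN}(k)-a_{pN}(k-pN)$ into the $p$ differences $a_{pN}(k-cN)-a_{pN}(k-(c+1)N)$, applies Proposition~\ref{minus-n} to each, and observes that every $T\subset[n]$ contributes to exactly one value of $c$, namely $c=\mo{(k-N_T)N^{-1}}{p}$, which yields $m_{k-N_T}$ as the argument.
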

\begin{proof}
Expanding the left side as the telescoping sum \[(a_{pN}(k) - a_{pN}(k-N)) + (a_{pN}(k-N) - a_{pN}(k-2N)) + \dots + (a_{pN}(k-(p-1)N) - a_{pN}(k-pN))\]and  using Proposition~\ref{minus-n}, we see that each subset $T \subset [n]$ contributes to exactly one term $a_{pN}(k-cN)-a_{pN}(k-(c+1)N)$, where $k-cN \equiv N_T \pmod{p}$, or equivalently if $c=\mo{(k-N_T)N^{-1}}{p}$. Since $m_{k-N_T}=p^{-1}(k-N_T-cN)$, the result follows.
\end{proof}


We can use this to bound the growth of $M(n)$ without the restriction that $\deg P_N < N$.

\begin{prop}\label{general-bound}
$M(n) \leq 2^{\frac{1}{2} n^2 + O(n\log n)}$.
\end{prop}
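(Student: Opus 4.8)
The plan is to iterate Proposition~\ref{general-coeffs}, peeling off one prime at a time, and control how the coefficient bound grows at each step. The key observation is that if $N$ has $n$ prime factors and $p$ is a new prime, then Proposition~\ref{general-coeffs} expresses $a_{pN}(k) - a_{pN}(k-pN)$ as an alternating sum of $2^n$ values of $a_N$. Since each $P_{pN}(x)$ has degree at most $pN$ (trivially, as it divides $(1-x^{pN})\prod(1-x^{(pN)_{ij}})$... actually one should just argue directly that $\deg P_{pN} < 2pN$, which is immediate from the degree formula), the reduction $a_{pN}(k) - a_{pN}(k-pN)$ recovers $a_{pN}(k)$ for $0 \le k < pN$ and $-a_{pN}(k-pN)$ for $pN \le k < 2pN$; in either case $|a_{pN}(k)| \le \sum_{T\subset[n]} |a_N(m_{k-N_T})| \le 2^n M'$, where $M'$ bounds the coefficients of $P_N$. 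So each time we adjoin a prime, the height bound multiplies by at most $2^n$.

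Carrying this out from the base case: starting from $M(1) = 1$ (or $M(2)=1$), adjoining primes to go from $n-1$ factors to $n$ factors multiplies the bound by $2^{n-1}$. Hence
\[
M(n) \le \prod_{m=2}^{n} 2^{m-1} = 2^{\sum_{m=2}^n (m-1)} = 2^{\binom{n}{2}} = 2^{\frac12 n^2 - \frac12 n},
\]
which is already $2^{\frac12 n^2 + O(n)}$, comfortably within the claimed $2^{\frac12 n^2 + O(n\log n)}$. (In fact this crude argument beats the stated bound, so either formulation is fine; the $O(n \log n)$ slack presumably leaves room for a less careful accounting elsewhere, or matches the lower bound construction in Section~5.)

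The one genuine point requiring care is the degree bound needed to pass from the \emph{difference} $a_{pN}(k) - a_{pN}(k-pN)$ back to the individual coefficient $a_{pN}(k)$. We need $\deg P_{pN} < 2pN$ so that for each $k$ at most one of $a_{pN}(k)$, $a_{pN}(k-pN)$ is nonzero. This follows from the degree formula: $\deg P_{pN} = pN + \sum_{i<j}(pN)_{ij} - \sum_i (pN)_i < pN + \binom{n+1}{2}\cdot\frac{pN}{4} $... — more simply, $\deg P_{pN} \le pN \le 2pN - 1$ would suffice if $P_{pN}$ had degree at most $pN$, but it need not; however one checks directly that $\sum_{1\le i<j\le n+1}(pN)_{ij} - \sum_{i=1}^{n+1}(pN)_i \le 0$ fails in general too, so the honest statement is that $\deg P_{pN} = pN(1 - \sum 1/q_i + \sum 1/(q_iq_j)) $ over the $n+1$ primes $q_i$, and since $\sum 1/(q_iq_j) \le \frac12(\sum 1/q_i)^2$ and each term is positive, this is always strictly less than $pN(1 + \frac12(\sum 1/q_i)^2) < 2pN$ provided $\sum 1/q_i < \sqrt 2$, which holds for all $n$ we'd ever multiply up to in a worst case — but not universally. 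The clean fix, and the step I expect to be the main obstacle, is: instead of requiring a single-block separation, note that $\deg P_{pN} < pN \cdot n$ always (very crudely), so $a_{pN}$ is supported on $[0, pnN)$, and recovering it from the period-$pN$ differences costs an extra factor of $n$ in the bound — i.e. $|a_{pN}(k)| \le n \cdot 2^n \cdot M'$. This introduces a harmless extra $\prod_{m} m = n!$ factor overall, which is exactly $2^{O(n\log n)}$, accounting precisely for the $O(n\log n)$ term in the statement. So the final estimate is
\[
M(n) \le n! \cdot 2^{\binom n2} = 2^{\frac12 n^2 + O(n\log n)},
\]
as claimed.
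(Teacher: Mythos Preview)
Your final argument is correct and matches the paper's: both telescope $a_N(k)$ into successive differences $a_N(k-iN)-a_N(k-(i+1)N)$, bound each difference by $2^{n-1}M(n-1)$ via Proposition~\ref{general-coeffs}, and bound the number of nonzero differences by a polynomial in $n$ using a crude degree estimate, giving $M(n)/M(n-1)\le 2^{\,n+O(\log n)}$ and hence $M(n)\le 2^{\binom n2 + O(n\log n)}$. Your false start with $\deg P_{pN}<2pN$ is rightly abandoned; the paper bounds the number of telescoping terms by $\lceil \deg P_N/N\rceil\le n+1$ rather than your $n$, but either suffices.
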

\begin{proof}
Note that it suffices to check that $M(n)/M(n-1) \leq 2^{n+O(\log n)}$.

We may write $a_N(k) = \sum_{i=0}^{\infty}(a_N(k-iN)-a_N(k-(i+1)N)$. By Proposition~\ref{general-coeffs}, each term on the right side is bounded by $2^{n-1}M(n-1)$. Moreover, the number of nonzero terms on the right side is bounded by
\[\left\lceil \frac{\deg P_N}N \right\rceil \leq 1+\frac{\deg P_N}N < 1+ \sum_{i=1}^n\frac{1}{p_i}<n+1 = 2^{O(\log n)}.\]
Combining these gives the result.
\end{proof}
Note that although this bound is weak, it still grows like $2^{\frac12n^2}$ just like the bound found in Proposition~\ref{bound}.

In a certain special case, we can simplify Proposition~\ref{general-coeffs} to a form that will be useful later.

\begin{prop} \label{truncation}
Let $p$ be a prime not dividing $N=p_1\dotsm p_n$, and suppose that $\sum_{i=1}^n \frac{1}{p_i} < 1$. Then for any integer $k$,
\[a_{pN}(k) = \sum_{T \subset [n]}(-1)^{|T|}a_N(m_{k-N_T}') \cdot \{m_{k-N_T}' \leq kp^{-1}\},\]
where $m_k' = \mo{kp^{-1}}{N}$.
\end{prop}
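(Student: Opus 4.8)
The plan is to derive the identity by telescoping Proposition~\ref{general-coeffs} and then using the hypothesis to collapse the resulting double sum. First I would record that the assumption $\sum_i \frac1{p_i}<1$ is used in only two ways: since $1<\frac{2n}{n-1}$, Proposition~\ref{176} gives $\deg P_N<N$, so $a_N$ is supported on $\{0,1,\dots,N-1\}$; and $N_T=\sum_{i\in T}N_i\le N\sum_i\frac1{p_i}<N$, so $0\le N_T<N$ for every $T\subseteq[n]$.

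Since $P_{pN}$ is a polynomial, $a_{pN}(k-cpN)=0$ for all large $c$, so $a_{pN}(k)$ equals the finite telescoping sum $\sum_{c\ge0}\bigl(a_{pN}(k-cpN)-a_{pN}(k-(c+1)pN)\bigr)$. Applying Proposition~\ref{general-coeffs} to each summand, noting that $m_{k-cpN-N_T}=m_{k-N_T}-cN$ because subtracting $cpN$ does not change $\mo{(k-N_T)N^{-1}}{p}$, and interchanging the two finite sums, I obtain
\[a_{pN}(k)=\sum_{T\subseteq[n]}(-1)^{|T|}\sum_{c\ge0}a_N\bigl(m_{k-N_T}-cN\bigr).\]
For fixed $T$, the integers $m_{k-N_T}-cN$ (for $c\ge0$) all lie in one residue class modulo $N$, of which exactly one representative lies in the support interval $[0,N)$ of $a_N$, namely $\mo{m_{k-N_T}}{N}=\mo{(k-N_T)p^{-1}}{N}=m'_{k-N_T}$; moreover this representative actually occurs among the $m_{k-N_T}-cN$ exactly when $m_{k-N_T}\ge0$. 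Hence the inner sum equals $a_N(m'_{k-N_T})\cdot\{m_{k-N_T}\ge0\}$.

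It then remains to prove the equivalence $\{m_{k-N_T}\ge0\}=\{m'_{k-N_T}\le kp^{-1}\}$, i.e.\ $\{p\,m'_{k-N_T}\le k\}$; summing over $T$ finishes. From $m_{k-N_T}=p^{-1}(k-N_T-c_0N)$ with $c_0=\mo{(k-N_T)N^{-1}}{p}\in\{0,\dots,p-1\}$, one checks that $m_{k-N_T}$ is the largest integer in its residue class modulo $N$ not exceeding $(k-N_T)/p$. If $m_{k-N_T}\ge0$, then $m'_{k-N_T}\le m_{k-N_T}\le (k-N_T)/p\le k/p$. Conversely, if $p\,m'_{k-N_T}\le k$, then $k-p\,m'_{k-N_T}$ is a nonnegative integer congruent to $N_T$ modulo $N$, so, using $0\le N_T<N$, it equals $N_T+sN$ for some $s\ge0$; thus $p\,m'_{k-N_T}\le k-N_T$, so $m'_{k-N_T}$ is a nonnegative integer in the class not exceeding $(k-N_T)/p$, whence $m_{k-N_T}\ge m'_{k-N_T}\ge0$.

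I expect the main obstacle to be exactly this last equivalence, together with arranging the argument so that it holds uniformly in $k$. It is tempting to skip the telescoping: by Proposition~\ref{general-coeffs} and $\deg P_{pN}<pN$ one gets $a_{pN}(k)=\sum_T(-1)^{|T|}a_N(m_{k-N_T})$ for $0\le k<pN$, but this does not produce the stated indicator for general $k$, and patching the range $k\ge pN$ is awkward. Telescoping by full periods $pN$ is what makes the condition $\{m'_{k-N_T}\le kp^{-1}\}$ come out correctly for every integer $k$, and the one genuinely delicate computation is the residue identity $k-p\,m'_{k-N_T}=N_T+sN$ with $s\ge0$, which is precisely where $\sum_i\frac1{p_i}<1$ (via $N_T<N$) is essential.
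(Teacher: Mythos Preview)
Your proof is correct and follows essentially the same route as the paper: telescope Proposition~\ref{general-coeffs} over shifts by $pN$, interchange the two sums, use $\deg P_N<N$ (via Proposition~\ref{176}) to reduce the inner sum to a single term $a_N(m'_{k-N_T})$ guarded by the condition $m_{k-N_T}\ge0$, and then convert that condition to $p\,m'_{k-N_T}\le k$ using $0\le N_T<N$. The paper phrases the guard as $m'_{k-N_T}\le m_{k-N_T}$ and drops the $N_T$ from $p\,m'_{k-N_T}\le k-N_T$ via the same mod-$N$ observation, but this is the same argument.
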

\begin{proof}
By telescoping the sum in Proposition~\ref{general-coeffs}, we find that
\[
a_{pN}(k) = \sum_{T \subset [n]}(-1)^{|T|} \sum_{j=0}^\infty a_N(m_{k-N_T}-jN).
\]
By Proposition~\ref{176}, since $\sum_{i=1}^n \frac{1}{p_i} < 1$, $\deg P_N<N$. Thus each infinite sum has at most one nonzero term. Such a nonzero term must be $a_N(\mo{m_{k-N_T}}N) = a_N(m'_{k-N_T})$, and it is present if and only if $m'_{k-N_T} \leq m_{k-N_T}$. Since $m_{k-N_T}$ was constructed to be the largest integer $m$ such that $pm \equiv k-N_T \pmod N$ and $pm \leq k-N_T$, and $m'_{k-N_T}$ satisfies the first condition, we have that $m'_{k-N_T} \leq m_{k-N_T}$ if and only if $pm'_{k-N_T} \leq k-N_T$. But since both sides of this inequality are congruent modulo $N$, and $N_T= N\cdot\sum_{i \in T} \frac 1{p_i}<N$, we can equivalently drop the $N_T$ on the right side, and the result follows.
\end{proof}
In other words, to find $a_{pN}(k)$, we find a number of terms of the form $\pm a_N(m)$, order them by $m$, and sum some initial segment of them. Moreover, note that increasing $k$ by $N$ only changes the initial segment and not the terms. If $N$ is generic, then by Proposition~\ref{minus-n}, the sum over any initial segment will equal $a_{pN}(k')$ for some $k' \equiv k \pmod {N}$.

Another consequence of Proposition~\ref{minus-n} is that we can use it to show that many coefficients must vanish.

\begin{prop} \label{zero}
Suppose $\deg P_N < N$ and choose any $p_i$ dividing $N$. Let $0< k<N_i$ be an integer such that $kN_i^{-1}$ is not congruent to $\sum_{j \in T} \mo{p_j^{-1}}{p_i}$ modulo $p_i$ for any $T \subset [n]\backslash\{i\}$. Then $a_N(k)=0$.
\end{prop}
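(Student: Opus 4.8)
The plan is to strip the chosen prime $p_i$ off of $N$ and feed the result into the recursion of Proposition~\ref{minus-n}. Write $N_i = N/p_i$, a product of the $n-1$ distinct primes $\{p_j : j \neq i\}$, none of which equals $p_i$, so that $p_i \cdot N_i = N$. I would apply Proposition~\ref{minus-n} with $N_i$ playing the role of ``$N$'' and $p_i$ the role of ``$p$''; under this substitution the quantities called ``$N_j$'' there become $N_i/p_j = N_{ij}$, and a subset sum ``$N_T$'' becomes $\sum_{j \in T} N_{ij}$ for $T \subset [n]\setminus\{i\}$. The proposition then gives
\[
a_N(k) - a_N(k - N_i) = \sum_{T} (-1)^{|T|}\, a_{N_i}\!\left(p_i^{-1}\Big(k - \sum_{j \in T} N_{ij}\Big)\right),
\]
where, by the equivalent form of the index condition recorded in Proposition~\ref{minus-n}, the sum is over exactly those $T \subset [n]\setminus\{i\}$ with $k N_i^{-1} \equiv \sum_{j \in T} \mo{p_j^{-1}}{p_i} \pmod{p_i}$.

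Now the hypothesis on $k$ says precisely that no such $T$ exists, so the right-hand side is an empty sum and equals $0$; hence $a_N(k) = a_N(k - N_i)$. Finally, since $0 < k < N_i$, the argument $k - N_i$ is strictly negative, so $a_N(k-N_i) = 0$ because $P_N(x)$ is a polynomial. Therefore $a_N(k) = 0$.

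I do not expect a genuine obstacle here; the content is just the recognition that ``$k$ avoids every residue sum $\sum_{j \in T} \mo{p_j^{-1}}{p_i}$'' is exactly the condition annihilating every term of the Proposition~\ref{minus-n} recursion. The only points needing a little care are the bookkeeping when $N$ is replaced by $N_i$ (so that ``$N_j$'' becomes $N_{ij}$ and the congruence condition turns into the residue-sum condition of the hypothesis) and the endpoint remark that $k - N_i < 0$, which is where the bound $k < N_i$ is used.
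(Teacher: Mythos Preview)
Your argument is correct and is exactly the paper's proof, just written out in full detail: apply Proposition~\ref{minus-n} with $N_i$ in place of $N$ and $p_i$ in place of $p$, note the hypothesis makes the right-hand sum empty, and use $k-N_i<0$ to conclude $a_N(k)=a_N(k-N_i)=0$.
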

\begin{proof}
By Proposition~\ref{minus-n}, $a_N(k)= a_N(k-N_i)=0$ (since the sum on the righthand side is empty and $k-N_i<0$).
\end{proof}

Proposition~\ref{zero} implies that a large number of the regions defined by Corollary~\ref{order} must yield coefficients of zero. Conversely, if $a_N(k) \neq 0$ and $kN^{-1}$ is not congruent to any $\sum_{j \in T} \mo{p_j^{-1}}{p_i}$ modulo $p_i$, then $k>N_i$.

We will now give a more explicit (and visual) description of Corollary~\ref{order} for the case $n=3$ in the next section.

\section{The cases $n=3$ and $n=4$}

In this section, we will give a more explicit description of $P_N(x)$ when $n=3$. We will then use this to show that $M(4)=2$.

Let $N=pqr$. As per Corollary~\ref{order}, we first need to determine the possible orders of the residues in $\{0, \mo{p^{-1}}r, \mo{q^{-1}}r, \mo{p^{-1}+q^{-1}}r\}$ (and the analogous sets for the other primes). Let us write $\mo{c}m^+ = \mo{c}m$ if $\mo{c}m \neq 0$ and $\mo{c}m^+=m$ if $\mo{c}m = 0$.

\begin{lemma} \label{pqr}
Let $p$, $q$, and $r$ be distinct primes. Then, up to permutation of $p$, $q$, and $r$, one of the following four possibilities holds:
\begin{enumerate}
\item $\mo{p^{-1}+q^{-1}}r^+<\mo{p^{-1}}r\leq\mo{q^{-1}}r$,\\
$\mo{r^{-1}}q\leq\mo{p^{-1}}q<\mo{p^{-1}+r^{-1}}q^+$,\\
$\mo{q^{-1}}p\leq\mo{r^{-1}}p<\mo{q^{-1}+r^{-1}}p^+$;
\item $\mo{p^{-1}+q^{-1}}r^+<\mo{p^{-1}}r\leq\mo{q^{-1}}r$,\\
$\mo{r^{-1}}q\leq\mo{p^{-1}}q<\mo{p^{-1}+r^{-1}}q^+$,\\
$\mo{r^{-1}}p\leq\mo{q^{-1}}p<\mo{q^{-1}+r^{-1}}p^+$;
\item The same as (1) but with all inequalities reversed;
\item The same as (2) but with all inequalities reversed.
\end{enumerate}
\end{lemma}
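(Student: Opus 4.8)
The plan is to exploit the single arithmetic identity that governs everything here: for distinct primes $p,q,r$, the sum $qr\mo{p^{-1}}r + \dots$ — more usefully, working modulo $r$ alone — we have $\mo{p^{-1}}r + \mo{q^{-1}}r \equiv \mo{p^{-1}+q^{-1}}r \pmod r$, so the three residues $\mo{p^{-1}}r,\ \mo{q^{-1}}r,\ \mo{p^{-1}+q^{-1}}r$ satisfy exactly one of $\mo{p^{-1}+q^{-1}}r = \mo{p^{-1}}r + \mo{q^{-1}}r$ (the ``non-wrapping'' case) or $\mo{p^{-1}+q^{-1}}r = \mo{p^{-1}}r + \mo{q^{-1}}r - r$ (the ``wrapping'' case). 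Equivalently, using the $\mo{\cdot}r^+$ notation, the wrapping case is precisely $\mo{p^{-1}+q^{-1}}r^+ \le \max(\mo{p^{-1}}r, \mo{q^{-1}}r)$ and in fact $\mo{p^{-1}+q^{-1}}r^+ < \mo{p^{-1}}r$ and $< \mo{q^{-1}}r$ when we also know the two summands are distinct (which they are, since $p^{-1}\not\equiv q^{-1} \bmod r$ as $p\neq q$). So: attach to each of the three primes $r$, $q$, $p$ a single bit $w_r, w_q, w_p \in \{0,1\}$ recording whether wrapping occurs in that modulus. This gives $2^3 = 8$ a priori cases, and the claim is that only $2$ of them occur up to permutation, each in two ``orientations'' (the straight version and the all-reversed version), yielding the four listed possibilities.

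The key step is therefore to rule out the cases $w_p = w_q = w_r = 0$ and $w_p = w_q = w_r = 1$, and then to observe that the remaining six cases collapse under permutation to exactly two. For the all-zero case: $w_r = 0$ means $\mo{p^{-1}}r + \mo{q^{-1}}r < r$, and summing the analogous strict inequalities — or better, multiplying back up — one gets a contradiction from a counting/size argument. Concretely, I would use the global identity that $qr\mo{p^{-1}}r + pr\mo{q^{-1}}r + pq\mo{r^{-1}}p$-type sums are pinned down: recall from the proof of Lemma~\ref{pq+1} that $p\mo{p^{-1}}q + q\mo{q^{-1}}p = pq+1$. The cleanest route is probably to note that $w_r = 0 \iff \mo{p^{-1}}r + \mo{q^{-1}}r = \mo{p^{-1}+q^{-1}}r$, and to sum the integers: consider $\sigma = \mo{p^{-1}}r + \mo{q^{-1}}r + (\text{cyclic})$ over all three moduli, and show that the ``all non-wrapping'' and ``all wrapping'' assignments force $\sigma$ to be simultaneously too small and too large, using that each $\mo{p^{-1}+q^{-1}}r$ lies strictly between $0$ and $r$ while each $\mo{p^{-1}}r$ is at least $1$. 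I expect this to reduce to checking that one cannot have $\mo{p^{-1}}r+\mo{q^{-1}}r < r$ \emph{and} $\mo{q^{-1}}p+\mo{r^{-1}}p < p$ \emph{and} $\mo{r^{-1}}q + \mo{p^{-1}}q < q$ all at once — a contradiction obtainable by clearing denominators and comparing against $pqr$, much as in Lemma~\ref{pq+1}.

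After the two extreme cases are eliminated, six cases remain, each with exactly one or exactly two wrapping coordinates; these are swapped in pairs by the reversal symmetry (replacing each residue $a$ by its ``complement,'' which exchanges wrapping and non-wrapping), so it suffices to handle the ``exactly one wrap'' family, say $w_r = 1$, $w_p = w_q = 0$. Within this family the remaining freedom is how $\mo{p^{-1}}r$ and $\mo{q^{-1}}r$ are ordered (and similarly in the other two moduli, but those orderings are then forced by the wrapping data together with one more application of the $p\mo{p^{-1}}q + q\mo{q^{-1}}p = pq+1$ identity relating the two ``mixed'' moduli), and this is exactly the dichotomy between possibility (1) and possibility (2). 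The main obstacle I anticipate is bookkeeping: making the ``reversal'' symmetry precise enough to legitimately halve the case count, and verifying that once two of the three wrapping bits and one ordering are fixed, the third modulus's inequalities are genuinely determined — this last point is where I expect to need the $pq+1$ identity applied to the pair of primes \emph{not} involved in the chosen ordering, and getting the direction of every inequality right (including the placement of the $\le$ versus $<$, which tracks exactly which residues can collide and which cannot) will require care rather than cleverness.
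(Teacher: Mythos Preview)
Your plan is essentially the paper's: the paper also assigns a bit to each prime (phrased as whether $\frac{1}{r}(\mo{p^{-1}}r+\mo{q^{-1}}r)$ exceeds $1$), rules out the all-zero and all-one patterns via the exact identity
\[
\frac{1}{r}\bigl(\mo{p^{-1}}r+\mo{q^{-1}}r\bigr)+\frac{1}{q}\bigl(\mo{p^{-1}}q+\mo{r^{-1}}q\bigr)+\frac{1}{p}\bigl(\mo{q^{-1}}p+\mo{r^{-1}}p\bigr)=3+\frac{1}{pq}+\frac{1}{pr}+\frac{1}{qr},
\]
which is exactly what your ``clearing denominators and comparing against $pqr$'' would yield once you sum three instances of $p\mo{p^{-1}}q+q\mo{q^{-1}}p=pq+1$, and then uses that same $pq+1$ identity to pin down the remaining orderings.

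One bookkeeping correction: after fixing $w_r=1$, $w_p=w_q=0$ and taking WLOG $\mo{p^{-1}}r\le\mo{q^{-1}}r$, it is the $q$-modulus ordering that is \emph{forced} (the paper shows $\mo{r^{-1}}q\le\mo{p^{-1}}q$ by contradiction), while the $p$-modulus ordering remains genuinely free --- \emph{that} is the (1) versus (2) dichotomy, not the $r$-ordering you named, which was already absorbed into the WLOG. Also, your ``reversal symmetry'' between the one-wrap and two-wrap families is not an actual symmetry of the data (there is no involution on the primes that flips every wrapping bit); the paper simply runs the parallel argument for the two-wrap case in a few lines, and you should expect to do the same.
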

\begin{proof}
Note that if $\frac{1}{r}(\mo{p^{-1}}r+\mo{q^{-1}}r)$ is at most 1, then $\mo{p^{-1}+q^{-1}}r^+$ is bigger than both $\mo{p^{-1}}r$ and $\mo{q^{-1}}r$, while if it is greater than 1, then these inequalities are reversed. Since $p\mo{p^{-1}}q+q\mo{q^{-1}}p = pq+1$, it follows that
\[\frac{1}{r}(\mo{p^{-1}}r+\mo{q^{-1}}r)+\frac{1}{q}(\mo{p^{-1}}q+\mo{r^{-1}}q)+\frac{1}{p}(\mo{q^{-1}}p+\mo{r^{-1}}p) = 3+\frac{1}{pq}+\frac{1}{pr}+\frac{1}{qr}. \label{eq:star} \tag{$*$}\]
Thus the three terms on the left side cannot all be at most than 1. Moreover, they cannot all be greater than 1, for then the left side would be at least $3+\frac 1p+\frac 1q+\frac 1r$. Thus either one or two of these terms is greater than 1.

Suppose that exactly one term on the left side of \eqref{eq:star} is greater than 1, say $\frac{1}{r}(\mo{p^{-1}}r+\mo{q^{-1}}r)$. We may assume without loss of generality that $\mo{p^{-1}}r\leq\mo{q^{-1}}r$. If $\mo{r^{-1}}q>\mo{p^{-1}}q$, then
\[
qr+1=q\mo{q^{-1}}r+r\mo{r^{-1}}q > q\mo{p^{-1}}r+r\mo{p^{-1}}q, \text{ so}\]
\[1\geq\frac 1r \mo{p^{-1}}r+\frac 1q \mo{p^{-1}}q = 2+\frac{1}{pr}+\frac{1}{pq}-\frac 1p (\mo{r^{-1}}p+ \mo{q^{-1}}p) \geq 1+\frac{1}{pr}+\frac{1}{pq}>1,\]
which is a contradiction. It follows that $\mo{r^{-1}}q\leq\mo{p^{-1}}q$, and the only possibilities are then (1) and (2).

Suppose two of the three terms on the left side of \eqref{eq:star} are greater than 1. We may assume without loss of generality that $\mo{q^{-1}}r \leq \mo{p^{-1}}r < \mo{p^{-1}+q^{-1}}r^+$. If $\mo{r^{-1}}q<\mo{p^{-1}}q$, then
\[
qr+1 = q\mo{q^{-1}}r+r\mo{r^{-1}}q < q\mo{p^{-1}}r+r\mo{p^{-1}}q, \text { so}\]
\[
1<\frac 1r \mo{p^{-1}}r+\frac 1q \mo{p^{-1}}q = 2+\frac 1{pr}+\frac 1{pq}-\frac 1p (\mo{r^{-1}}p+ \mo{q^{-1}}p) \leq 1+\frac 1{pr}+\frac{1}{pq}-\frac{1}{p}<1,
\]
which is a contradiction (since $\frac 1r+\frac 1q<1$). This yields either (3) and (4).
\end{proof}

It is now a simple matter to use Theorem~\ref{pn-coeffs} to calculate the coefficients of $P_{pqr}$. We will assume that $N$ is generic (all other $N$ can be obtained by degenerating these diagrams). For any integer $0 \leq k <pqr$, let \[h(k)=(\mo{k(qr)^{-1}}{p},\mo{k(pr)^{-1}}{q},\mo{k(pq)^{-1}}{r}).\]
By Theorem~\ref{pn-coeffs}, the coefficient of $x^k$ can be written as a sum of three terms depending on the projections of $h(k)$ onto the three coordinate planes. If $p$, $q$, and $r$ satisfy the first condition of Lemma~\ref{pqr} and the set $S$ in Theorem~\ref{pn-coeffs} is taken to be $\{(q,p), (r, p), (r, q)\}$, then we obtain Figure~\ref{case1}. The coefficient of $x^k$ is then the sum of the values corresponding to each of the three projections of $h(k)$ (where $+$ represents 1 and $-$ represents $-1$). This allows us to explicitly compute the coefficient of $x^k$ for each of the 64 regions as shown in Figure~\ref{case1}. (Note that each region is a product of three half-open intervals that contain their lower endpoints but not their upper endpoints.)

If $p$, $q$, and $r$ satisfy the second condition of Lemma~\ref{pqr}, the resulting coefficients are given in Figure~\ref{case2}. The coefficients for when the third or fourth condition is satisfied are given by these same two figures if we reverse the direction of each of the axes. Note that it is evident from this that all of the coefficients of $P_{pqr}$ are at most 1 in absolute value, so $M(3)=1$.

\begin{ex}
Let $p=5$, $q=11$, and $r=23$. Then
\begin{center}
\begin{tabular}{ccc}
$\mo{q^{-1}}{p}=1,$&$\mo{r^{-1}}{p}=2,$&$\mo{q^{-1}+r^{-1}}{p}=3,$\\
$\mo{r^{-1}}{q}=1,$&$\mo{p^{-1}}{q}=9,$&$\mo{p^{-1}+r^{-1}}{q}=10,$\\
$\mo{p^{-1}+q^{-1}}{r}=12,$&$\mo{p^{-1}}{r}=14,$&$\mo{q^{-1}}{r}=21,$
\end{tabular}
\end{center}
which is part of case 1 of Lemma~\ref{pqr}. To find the coefficient of $x^{71}$, we first calculate
\[h(71) = (\mo{71\cdot (11\cdot 23)^{-1}}{5}, \mo{71 \cdot (5 \cdot 23)^{-1}}{11}, \mo{71 \cdot (5 \cdot 11)^{-1}}{23}) = (2, 1, 13).\]
(Indeed, $2 \cdot (11 \cdot 23) + 1 \cdot (5 \cdot 23) + 13 \cdot (5 \cdot 11) = 1336 \equiv 71 \pmod{5\cdot 11\cdot 23}$.) Thus the coefficient corresponds to the region from $\mo{r^{-1}}{p}$ to $\mo{q^{-1}+r^{-1}}{p}$, from $\mo{r^{-1}}{q}$ to $\mo{p^{-1}}{q}$, and from $\mo{p^{-1}+q^{-1}}{r}$ to $\mo{p^{-1}}{r}$. Then Figure~\ref{case1} shows that the coefficient of $x^{71}$ equals 1 (as indicated by the box).
\end{ex}

\begin{figure}
\begin{center}
\begin{tikzpicture}[scale=0.3,z={(0,7)},y={(3.2,-.75)}, x={(-6.4,1)}]
\tikzstyle{every node}=[font=\small]

\draw (0.5, 0.5, .5) node{$1$};
\draw (1.5, 0.5, .5) node{$1$};
\draw (2.5, 0.5, .5) node{$0$};
\draw (3.5, 0.5, .5) node{$0$};
\draw (0.5, 1.5, .5) node{$0$};
\draw (1.5, 1.5, .5) node{$0$};
\draw (2.5, 1.5, .5) node{$0$};
\draw (3.5, 1.5, .5) node{$0$};
\draw (0.5, 2.5, .5) node{$0$};
\draw (1.5, 2.5, .5) node{$-1$};
\draw (2.5, 2.5, .5) node{$-1$};
\draw (3.5, 2.5, .5) node{$0$};
\draw (0.5, 3.5, .5) node{$0$};
\draw (1.5, 3.5, .5) node{$0$};
\draw (2.5, 3.5, .5) node{$0$};
\draw (3.5, 3.5, .5) node{$0$};

\draw (0.5, 0.5, 1.5) node{$0$};
\draw (1.5, 0.5, 1.5) node{$0$};
\draw (2.5, 0.5, 1.5) node{$0$};
\draw (3.5, 0.5, 1.5) node{$-1$};
\draw (0.5, 1.5, 1.5) node{$0$};
\draw (1.5, 1.5, 1.5) node{$0$};
\draw (2.5, 1.5, 1.5) node[draw, rectangle]{$1$};
\draw (3.5, 1.5, 1.5) node{$0$};
\draw (0.5, 2.5, 1.5) node{$0$};
\draw (1.5, 2.5, 1.5) node{$-1$};
\draw (2.5, 2.5, 1.5) node{$0$};
\draw (3.5, 2.5, 1.5) node{$0$};
\draw (0.5, 3.5, 1.5) node{$-1$};
\draw (1.5, 3.5, 1.5) node{$-1$};
\draw (2.5, 3.5, 1.5) node{$0$};
\draw (3.5, 3.5, 1.5) node{$-1$};

\draw (0.5, 0.5, 2.5) node{$0$};
\draw (1.5, 0.5, 2.5) node{$0$};
\draw (2.5, 0.5, 2.5) node{$-1$};
\draw (3.5, 0.5, 2.5) node{$-1$};
\draw (0.5, 1.5, 2.5) node{$0$};
\draw (1.5, 1.5, 2.5) node{$0$};
\draw (2.5, 1.5, 2.5) node{$0$};
\draw (3.5, 1.5, 2.5) node{$0$};
\draw (0.5, 2.5, 2.5) node{$1$};
\draw (1.5, 2.5, 2.5) node{$0$};
\draw (2.5, 2.5, 2.5) node{$0$};
\draw (3.5, 2.5, 2.5) node{$1$};
\draw (0.5, 3.5, 2.5) node{$0$};
\draw (1.5, 3.5, 2.5) node{$0$};
\draw (2.5, 3.5, 2.5) node{$0$};
\draw (3.5, 3.5, 2.5) node{$0$};

\draw (0.5, 0.5, 3.5) node{$0$};
\draw (1.5, 0.5, 3.5) node{$1$};
\draw (2.5, 0.5, 3.5) node{$0$};
\draw (3.5, 0.5, 3.5) node{$0$};
\draw (0.5, 1.5, 3.5) node{$-1$};
\draw (1.5, 1.5, 3.5) node{$0$};
\draw (2.5, 1.5, 3.5) node{$0$};
\draw (3.5, 1.5, 3.5) node{$0$};
\draw (0.5, 2.5, 3.5) node{$0$};
\draw (1.5, 2.5, 3.5) node{$0$};
\draw (2.5, 2.5, 3.5) node{$0$};
\draw (3.5, 2.5, 3.5) node{$1$};
\draw (0.5, 3.5, 3.5) node{$0$};
\draw (1.5, 3.5, 3.5) node{$1$};
\draw (2.5, 3.5, 3.5) node{$1$};
\draw (3.5, 3.5, 3.5) node{$1$}; 

\foreach \x in {0, ..., 3}
{
	\draw (\x, 0, -.75) -- (\x, 4, -.75);
	\draw (0, \x, -.75) -- (4, \x, -.75);
	\draw (\x, -4.25, 0) -- (\x, -4.25, 4);
	\draw (0, -4.25, \x) -- (4, -4.25, \x); 
	\draw (-4.25, \x, 0) -- (-4.25, \x, 4);
	\draw (-4.25, 0, \x) -- (-4.25, 4, \x);
}
\draw[dashed] (4, 0, -.75) -- (4, 4, -.75);
\draw[dashed] (0, 4, -.75) -- (4, 4, -.75);
\draw[dashed] (4, -4.25, 0) -- (4, -4.25, 4);
\draw[dashed] (0, -4.25, 4) -- (4, -4.25, 4); 
\draw[dashed] (-4.25, 4, 0) -- (-4.25, 4, 4);
\draw[dashed] (-4.25, 0, 4) -- (-4.25, 4, 4);

\draw (0, 0, -.75) node[above]{$0$};
\draw (1, 0, -.75) node[above left]{$\mo{q^{-1}}{p}$};
\draw (2, 0, -.75) node[above left]{$\mo{r^{-1}}{p}$};
\draw (3, 0, -.75) node[above left]{$\mo{q^{-1}+r^{-1}}{p}$};
\draw (4, 0, -.75) node[left]{$p$};
\draw (0, 1, -.75) node[above right]{$\mo{r^{-1}}{q}$};
\draw (0, 2, -.75) node[above right]{$\mo{p^{-1}}{q}$};
\draw (0, 3, -.75) node[above right]{$\mo{p^{-1}+r^{-1}}{q}$};
\draw (0, 4, -.75) node[right]{$q$};

\draw (0, -4.25, 0) node[below right]{$0$};
\draw (1, -4.25, 0) node[below]{$\mo{q^{-1}}{p}$};
\draw (2, -4.25, 0) node[below]{$\mo{r^{-1}}{p}$};
\draw (3, -4.25, 0) node[below=4pt]{$\mo{q^{-1}+r^{-1}}{p}$};
\draw (4, -4.25, 0) node[below left]{$p$};
\draw (0, -4.25, 1) node[right]{$\mo{p^{-1}+q^{-1}}{r}$};
\draw (0, -4.25, 2) node[right]{$\mo{p^{-1}}{r}$};
\draw (0, -4.25, 3) node[right]{$\mo{q^{-1}}{r}$};
\draw (0, -4.25, 4) node[right]{$r$};

\draw (-4.25, 0, 0) node[below left]{$0$};
\draw (-4.25, 1, 0) node[below]{$\mo{r^{-1}}{q}$};
\draw (-4.25, 2, 0) node[below]{$\mo{p^{-1}}{q}$};
\draw (-4.25, 3, 0) node[below=4pt]{$\mo{p^{-1}+r^{-1}}{q}$};
\draw (-4.25, 4, 0) node[below right]{$q$};
\draw (-4.25, 0, 1) node[left]{$\mo{p^{-1}+q^{-1}}{r}$};
\draw (-4.25, 0, 2) node[left]{$\mo{p^{-1}}{r}$};
\draw (-4.25, 0, 3) node[left]{$\mo{q^{-1}}{r}$};
\draw (-4.25, 0, 4) node[left]{$r$};

\draw (.5, .5, -.75) node{$+$};
\draw (1.5, .5, -.75) node{$+$};
\draw (1.5, 2.5, -.75) node{$-$};
\draw (2.5, 2.5, -.75) node{$-$};

\draw (0.5, -4.25, 3.5) node{$-$};
\draw (2.5, -4.25, 1.5) node{$+$};

\draw (-4.25, .5, 1.5) node{$-$};
\draw (-4.25, .5, 2.5) node{$-$};
\draw (-4.25, 3.5, 1.5) node{$-$};
\draw (-4.25, 2.5, 2.5) node{$+$};
\draw (-4.25, 2.5, 3.5) node{$+$};
\draw (-4.25, 3.5, 3.5) node{$+$};

\end{tikzpicture}
\end{center}
\caption{Coefficients of $P_{pqr}(x)$ in case 1 of Lemma~\ref{pqr}. To find the coefficient of $x^k$, compute $h(k)=(\mo{k(qr)^{-1}}{p},\mo{k(pr)^{-1}}{q},\mo{k(pq)^{-1}}{r})$ and determine which of the 64 regions it lies in. (All intervals contain their lower endpoints but not their upper endpoints.) According to Theorem~\ref{pn-coeffs}, the coefficient of $x^k$ can be determined by summing the $+$'s and $-$'s from its three projections of its region as indicated.} For instance, the coefficient of $x^{71}$ in $P_{5 \cdot 11 \cdot 23}(x)$ is the boxed 1, which received a contribution of $+1$ from the projection along the $q$ direction and no contributions from the other two projections..
\label{case1}
\end{figure}

\begin{figure}
\begin{center}
\begin{tikzpicture}[scale=0.3,z={(0,7)},y={(3.2,-.75)}, x={(-6.4,1)}]
\tikzstyle{every node}=[font=\small]

\draw (0.5, 0.5, .5) node{$1$};
\draw (1.5, 0.5, .5) node{$0$};
\draw (2.5, 0.5, .5) node{$0$};
\draw (3.5, 0.5, .5) node{$0$};
\draw (0.5, 1.5, .5) node{$0$};
\draw (1.5, 1.5, .5) node{$0$};
\draw (2.5, 1.5, .5) node{$0$};
\draw (3.5, 1.5, .5) node{$0$};
\draw (0.5, 2.5, .5) node{$0$};
\draw (1.5, 2.5, .5) node{$0$};
\draw (2.5, 2.5, .5) node{$-1$};
\draw (3.5, 2.5, .5) node{$0$};
\draw (0.5, 3.5, .5) node{$0$};
\draw (1.5, 3.5, .5) node{$0$};
\draw (2.5, 3.5, .5) node{$0$};
\draw (3.5, 3.5, .5) node{$0$};

\draw (0.5, 0.5, 1.5) node{$0$};
\draw (1.5, 0.5, 1.5) node{$0$};
\draw (2.5, 0.5, 1.5) node{$0$};
\draw (3.5, 0.5, 1.5) node{$-1$};
\draw (0.5, 1.5, 1.5) node{$0$};
\draw (1.5, 1.5, 1.5) node{$1$};
\draw (2.5, 1.5, 1.5) node{$1$};
\draw (3.5, 1.5, 1.5) node{$0$};
\draw (0.5, 2.5, 1.5) node{$0$};
\draw (1.5, 2.5, 1.5) node{$1$};
\draw (2.5, 2.5, 1.5) node{$0$};
\draw (3.5, 2.5, 1.5) node{$0$};
\draw (0.5, 3.5, 1.5) node{$-1$};
\draw (1.5, 3.5, 1.5) node{$0$};
\draw (2.5, 3.5, 1.5) node{$0$};
\draw (3.5, 3.5, 1.5) node{$-1$};

\draw (0.5, 0.5, 2.5) node{$0$};
\draw (1.5, 0.5, 2.5) node{$-1$};
\draw (2.5, 0.5, 2.5) node{$-1$};
\draw (3.5, 0.5, 2.5) node{$-1$};
\draw (0.5, 1.5, 2.5) node{$0$};
\draw (1.5, 1.5, 2.5) node{$0$};
\draw (2.5, 1.5, 2.5) node{$0$};
\draw (3.5, 1.5, 2.5) node{$0$};
\draw (0.5, 2.5, 2.5) node{$1$};
\draw (1.5, 2.5, 2.5) node{$1$};
\draw (2.5, 2.5, 2.5) node{$0$};
\draw (3.5, 2.5, 2.5) node{$1$};
\draw (0.5, 3.5, 2.5) node{$0$};
\draw (1.5, 3.5, 2.5) node{$0$};
\draw (2.5, 3.5, 2.5) node{$0$};
\draw (3.5, 3.5, 2.5) node{$0$};

\draw (0.5, 0.5, 3.5) node{$0$};
\draw (1.5, 0.5, 3.5) node{$-1$};
\draw (2.5, 0.5, 3.5) node{$0$};
\draw (3.5, 0.5, 3.5) node{$0$};
\draw (0.5, 1.5, 3.5) node{$-1$};
\draw (1.5, 1.5, 3.5) node{$-1$};
\draw (2.5, 1.5, 3.5) node{$0$};
\draw (3.5, 1.5, 3.5) node{$0$};
\draw (0.5, 2.5, 3.5) node{$0$};
\draw (1.5, 2.5, 3.5) node{$0$};
\draw (2.5, 2.5, 3.5) node{$0$};
\draw (3.5, 2.5, 3.5) node{$1$};
\draw (0.5, 3.5, 3.5) node{$0$};
\draw (1.5, 3.5, 3.5) node{$0$};
\draw (2.5, 3.5, 3.5) node{$1$};
\draw (3.5, 3.5, 3.5) node{$1$}; 

\foreach \x in {0, ..., 3}
{
	\draw (\x, 0, -.75) -- (\x, 4, -.75);
	\draw (0, \x, -.75) -- (4, \x, -.75);
	\draw (\x, -4.25, 0) -- (\x, -4.25, 4);
	\draw (0, -4.25, \x) -- (4, -4.25, \x); 
	\draw (-4.25, \x, 0) -- (-4.25, \x, 4);
	\draw (-4.25, 0, \x) -- (-4.25, 4, \x);
}
\draw[dashed] (4, 0, -.75) -- (4, 4, -.75);
\draw[dashed] (0, 4, -.75) -- (4, 4, -.75);
\draw[dashed] (4, -4.25, 0) -- (4, -4.25, 4);
\draw[dashed] (0, -4.25, 4) -- (4, -4.25, 4); 
\draw[dashed] (-4.25, 4, 0) -- (-4.25, 4, 4);
\draw[dashed] (-4.25, 0, 4) -- (-4.25, 4, 4);

\draw (0, 0, -.75) node[above]{$0$};
\draw (1, 0, -.75) node[above left]{$\mo{r^{-1}}{p}$};
\draw (2, 0, -.75) node[above left]{$\mo{q^{-1}}{p}$};
\draw (3, 0, -.75) node[above left]{$\mo{q^{-1}+r^{-1}}{p}$};
\draw (4, 0, -.75) node[left]{$p$};
\draw (0, 1, -.75) node[above right]{$\mo{r^{-1}}{q}$};
\draw (0, 2, -.75) node[above right]{$\mo{p^{-1}}{q}$};
\draw (0, 3, -.75) node[above right]{$\mo{p^{-1}+r^{-1}}{q}$};
\draw (0, 4, -.75) node[right]{$q$};

\draw (0, -4.25, 0) node[below right]{$0$};
\draw (1, -4.25, 0) node[below]{$\mo{r^{-1}}{p}$};
\draw (2, -4.25, 0) node[below]{$\mo{q^{-1}}{p}$};
\draw (3, -4.25, 0) node[below=4pt]{$\mo{q^{-1}+r^{-1}}{p}$};
\draw (4, -4.25, 0) node[below left]{$p$};
\draw (0, -4.25, 1) node[right]{$\mo{p^{-1}+q^{-1}}{r}$};
\draw (0, -4.25, 2) node[right]{$\mo{p^{-1}}{r}$};
\draw (0, -4.25, 3) node[right]{$\mo{q^{-1}}{r}$};
\draw (0, -4.25, 4) node[right]{$r$};

\draw (-4.25, 0, 0) node[below left]{$0$};
\draw (-4.25, 1, 0) node[below]{$\mo{r^{-1}}{q}$};
\draw (-4.25, 2, 0) node[below]{$\mo{p^{-1}}{q}$};
\draw (-4.25, 3, 0) node[below=4pt]{$\mo{p^{-1}+r^{-1}}{q}$};
\draw (-4.25, 4, 0) node[below right]{$q$};
\draw (-4.25, 0, 1) node[left]{$\mo{p^{-1}+q^{-1}}{r}$};
\draw (-4.25, 0, 2) node[left]{$\mo{p^{-1}}{r}$};
\draw (-4.25, 0, 3) node[left]{$\mo{q^{-1}}{r}$};
\draw (-4.25, 0, 4) node[left]{$r$};

\draw (.5, .5, -.75) node{$+$};
\draw (2.5, 2.5, -.75) node{$-$};

\draw (0.5, -4.25, 3.5) node{$-$};
\draw (1.5, -4.25, 3.5) node{$-$};
\draw (1.5, -4.25, 1.5) node{$+$}; 
\draw (2.5, -4.25, 1.5) node{$+$};

\draw (-4.25, .5, 1.5) node{$-$};
\draw (-4.25, .5, 2.5) node{$-$};
\draw (-4.25, 3.5, 1.5) node{$-$};
\draw (-4.25, 2.5, 2.5) node{$+$};
\draw (-4.25, 2.5, 3.5) node{$+$};
\draw (-4.25, 3.5, 3.5) node{$+$};

\end{tikzpicture}
\end{center}
\caption{Coefficients of $P_{pqr}(x)$ in case 2 of Lemma~\ref{pqr}.}
\label{case2}
\end{figure}
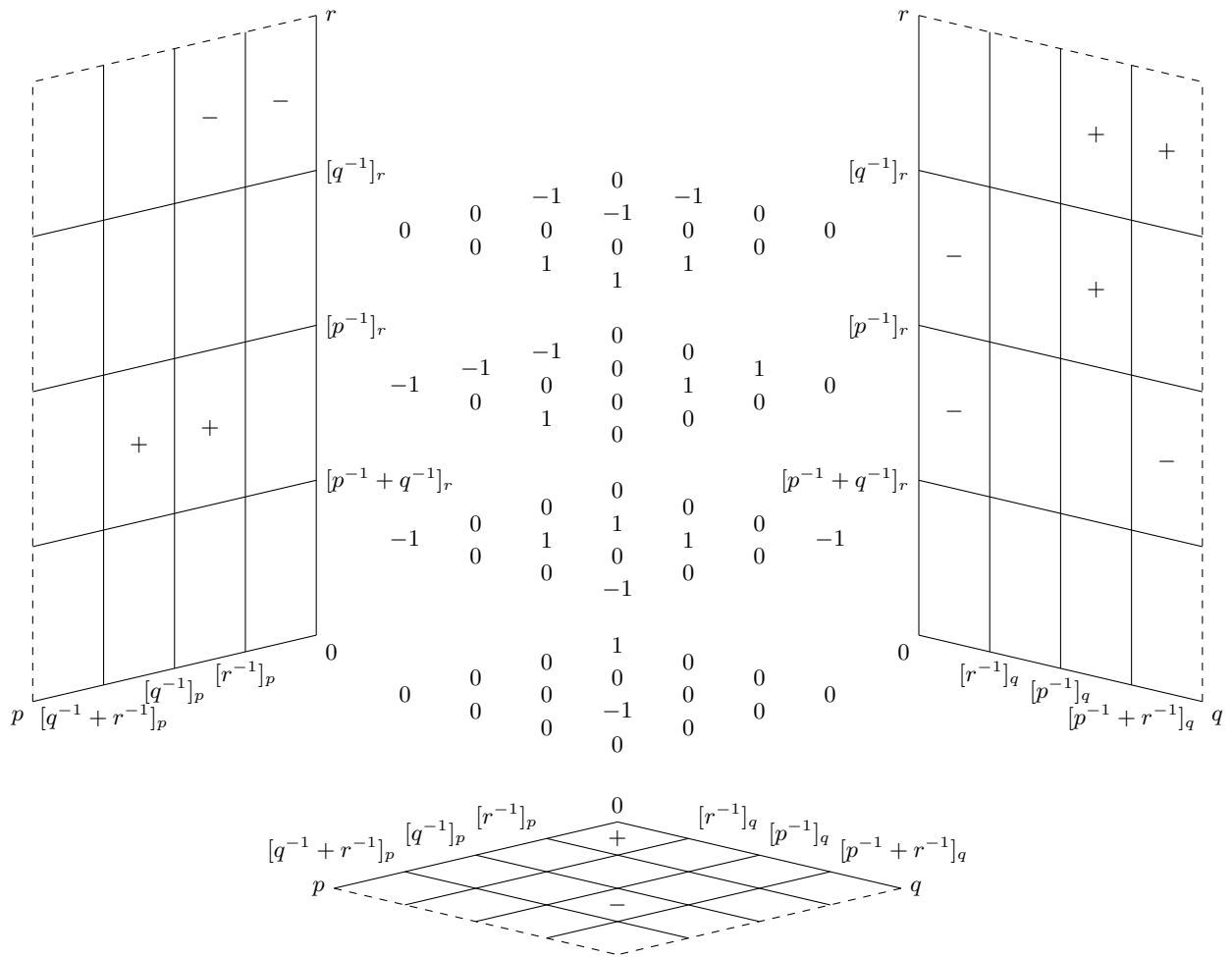



We can also use these diagrams showing the coefficients of $P_{pqr}(x)$ to prove that $M(4) =2$.

\begin{thm}
$M(4)=2.$
\end{thm}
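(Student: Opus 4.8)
The plan is to prove $M(4)=2$ in two parts: first exhibit an $N=p_1p_2p_3p_4$ with a coefficient equal to $2$ in absolute value (so $M(4)\geq 2$), and then show no coefficient can exceed $2$ in absolute value (so $M(4)\leq 2$). For the lower bound, I would use Proposition~\ref{truncation} (or Proposition~\ref{general-coeffs}) with $n=3$: write $pN$ with $N=p_1p_2p_3$, and choose the primes so that $\sum 1/p_i<1$ so that $\deg P_N<N$ and the truncation formula applies cleanly. Since the $n=3$ coefficients are fully described by Figures~\ref{case1} and~\ref{case2}, one can engineer a value of $k$ so that the initial segment of signed terms $\pm a_N(m'_{k-N_T})$ summed in Proposition~\ref{truncation} picks up, say, two $+1$'s (or two $-1$'s) from distinct $T$'s before any cancelling term appears. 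Concretely one searches for small primes $p<p_1<p_2<p_3$ (a computer search suffices) realizing this; the excerpt's framework guarantees such a configuration exists once the relative orders of the residues line up, and I would simply report the resulting $N$ and the exponent $k$.

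For the upper bound $M(4)\leq 2$, the key tool is again Proposition~\ref{general-coeffs} (with $n=3$): for $N=p_1p_2p_3$ and $p=p_4$ a prime not dividing $N$,
\[
a_{pN}(k)-a_{pN}(k-pN) = \sum_{T \subset [3]} (-1)^{|T|} a_N(m_{k-N_T}).
\]
Each term $a_N(m_{k-N_T})$ is a coefficient of $P_{pqr}$, hence lies in $\{-1,0,1\}$ by the $n=3$ analysis, and there are $2^3=8$ subsets $T$, so naively the right-hand side is bounded by $8$. The whole point is to do better by exploiting the explicit structure of Figures~\ref{case1} and~\ref{case2}. The idea: as $T$ ranges over subsets of $[3]$, the arguments $m_{k-N_T}$ correspond (via the map $h$ of Section~4) to eight points in the $p_1\times p_2\times p_3$ box that differ from one another by the fixed shifts $\mo{p_j^{-1}}{p_i}$ along each axis — i.e. they form the vertices of a combinatorial ``box'' in the coordinate grid whose $i$-th coordinate takes one of two values in each direction. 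One then checks, using the $64$-region tables, that over any such box of $8$ vertices the alternating sum $\sum_T (-1)^{|T|} a_N(m_{k-N_T})$ never exceeds $2$ in absolute value. Combined with $a_{pN}(k)=\sum_{i\ge 0}\bigl(a_{pN}(k-ipN)-a_{pN}(k-(i+1)pN)\bigr)$ and the fact that for $n=4$ one has $\deg P_N<N$ (Proposition~\ref{176}, since $4<176$), so this telescoping sum has only one nonzero block, we get $|a_{pN}(k)|\leq 2$.

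The main obstacle is the finite but genuinely intricate case analysis in the last step: one must verify that the alternating sum over the $8$ box-vertices is bounded by $2$, and this depends on which of the four cases of Lemma~\ref{pqr} governs each of the three ``new'' prime relations introduced by adjoining $p_4$ — i.e. how $\mo{p_4^{-1}}{p_i}$ sits relative to the other residues modulo each $p_i$ (and symmetrically how $\mo{p_i^{-1}}{p_4}$ behaves modulo $p_4$). A clean way to organize this is to note that $f_i(k-N_A)$ in Theorem~\ref{pn-coeffs} is a product of indicator functions, so the alternating sum factors nicely along each coordinate direction; the bound then reduces to a one-dimensional statement about how many sign changes an initial-segment sum can accumulate, which one bounds by examining the relative order of the at most $2^{n-1}+1=5$ relevant residues modulo each prime (Corollary~\ref{order}). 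I expect the cleanest writeup pushes as much of the argument as possible into this ``direction-by-direction'' factorization so that the residual casework is small enough to enumerate explicitly (or defer to a short computer check over the finitely many possible orderings), and I would structure the proof around Lemma~\ref{pqr} to keep the number of cases manageable.
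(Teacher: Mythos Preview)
Your overall strategy matches the paper's --- express each coefficient of $P_{pqrs}$ as a signed combination of eight coefficients of $P_{pqr}$ sitting at the corners of a box in the $64$-region picture, then analyze those eight values --- but the execution has a real gap. The quantity you propose to bound, the full alternating sum $\sum_{T\subset[3]}(-1)^{|T|}a_N(m_{k-N_T})$ taken over all eight box vertices, is always~$0$: this is exactly the observation the paper makes (``the sum of all eight of the $g_{abc}$ is $0$''). The coefficient $a_{pN}(k)$ is not that full sum but rather a \emph{partial} sum --- an initial segment of the eight terms when they are ordered by the value of $m'_{k-N_T}$ (this is the content of Proposition~\ref{truncation}). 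So the task is to show no such initial segment reaches $\pm 3$, and for that you need to control the \emph{order} in which the eight signed values are summed, not just their multiset.

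This is where your proposal is missing the key ingredient. The paper's argument hinges on Proposition~\ref{zero}: whenever two box vertices lie in the same region and carry opposite signs, Proposition~\ref{zero} forces one of the corresponding exponents to precede the other, so their contributions cancel before any partial sum can grow. The case analysis is then short: if the eight regions are all distinct, only two box placements in Figure~\ref{case1} have six or more nonzero entries, and in each the ordering constraints from Proposition~\ref{zero} prevent the three $+1$'s from all preceding the three $-1$'s; if two opposite faces of the box land in the same slab (say $x_0=x_1$), a partial sum of $\pm 3$ would force three of the four values $f(x_0y_bz_c)$ to share a sign with sign flips in two directions, which a glance at the table rules out. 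Your ``factorization along each coordinate direction'' does not substitute for this, and the ``one-dimensional statement about sign changes'' is not made precise enough to carry the bound. For the lower bound, the paper simply exhibits $a_{5\cdot 7\cdot 11\cdot 13}(233)=-2$; your search plan would find this, but you should just state the example.
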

\begin{proof}
Let $p$, $q$, $r$, and $s$ be arbitrary distinct primes. Also let us assume that $pqr$ satisfies the first condition of Lemma~\ref{pqr}; the other cases are similar. For convenience, let us denote each of the 64 regions in Figure~\ref{case1} by a triple of integers from $0$ to $3$ denoting its position in the $p$, $q$, and $r$ directions, and let $f(xyz)$ denote the coefficient corresponding to region $xyz$. For instance, region $211$ will refer to the region containing the coefficient of $x^{71}$ in the example above, and so $f(211)=1$.

Fix a residue $\bar{k}$ modulo $N=pqr$. By Proposition~\ref{truncation}, the coefficients of $x^k$ for $k \equiv \bar{k} \pmod{N}$ in $P_{Ns}(x)$ are partial sums of a signed sequence of eight coefficients of $P_{N}(x)$ that when plotted into the regions of Figure~\ref{case1} lie at the corners of a rectangular box. Let us denote the corresponding exponents by $k_{000}, k_{001}, k_{010}, \dots, k_{111}$ and suppose that they lie in the eight (not necessarily distinct) regions $x_0y_0z_0,\dots, x_1y_1z_1$, so the corresponding signed term is $g_{abc}=(-1)^{a+b+c} f(x_ay_bz_c)$. The order in which the $g_{abc}$ are summed is given by the order of the $k_{abc}$. Note that switching $k_{0bc}$ and $k_{1bc}$ for all $b$ and $c$ will only swap $x_0$ and $x_1$ and will therefore just change the sign of all the $g_{abc}$ and not the order in which we sum them. Since this will not affect the absolute value of any partial sum, we will assume that $h(k_{000})$ is minimal in all three coordinate directions.

Note that the sum of all eight of the $g_{abc}$ is 0 since the coefficients of $x^k$ vanish for $k$ sufficiently large. Then in order for a partial sum of these terms to be at least 3 in absolute value, at least six of the $g_{abc}$ must be nonzero.

First suppose all the $x_ay_bz_c$ are distinct. Then by examining Figure~\ref{case1} we see that there are only two places this can occur, namely either
\[x_ay_bz_c \in \{1,3\} \times \{0,3\} \times \{1,3\} \quad\text{or}\quad x_ay_bz_c \in \{1,3\} \times \{2,3\} \times \{1,3\}.\]
In the first case, we find
\[(g_{000}, g_{001}, g_{010}, g_{011}, g_{100}, g_{101}, g_{110}, g_{111}) = (0, -1, -1, 1, 1, 0, -1, 1).\]
In order to have a coefficient of absolute value at least 3, we must have that when the $g_{abc}$ are ordered according to the $k_{abc}$, all the 1's must come before all the $-1$'s or vice versa. But this cannot happen: by Proposition~\ref{zero}, $k_{011}<k_{001}$ (since $f(133)$ and $f(103)$ are both nonzero) and likewise $k_{110}<k_{100}$. The second case is similar.

Therefore we may suppose that not all of the $x_ay_bz_c$ are distinct, say $x_0=x_1$ (the other cases are similar). Then by Proposition~\ref{zero}, $k_{0bc} \leq k_{1bc}$ if $g_{0bc}=-g_{1bc}$ is nonzero. Then in order for some partial sum of the $g_{abc}$ to have absolute value at least 3, at least three of the $g_{0bc}$ must have the same sign. Then some $f(x_0y_bz_c)$ must differ in sign from both $f(x_0y_{1-b}z_c)$ and $f(x_0y_bz_{1-c})$. But an inspection of Figure~\ref{case1} shows that this cannot happen. 

We have shown that $M(4)\leq 2$. Since the coefficient of $x^{233}$ in $P_{5\cdot7\cdot11\cdot13}(x)$ is $-2$, we must have $M(4)=2$, as desired.
\end{proof}

Having analyzed $M(n)$ for small values of $n$, we will now give an asymptotic bound for $M(n)$ in the next section.

\section{Asymptotics of $M(n)$}

In this section, we will show that, although from the values $M(2)=M(3)=1$ and $M(4)=2$ it might appear that $M(n)$ grows slowly, in fact it grows exponentially in $n^2$. To place a lower bound on $M(n)$, we will inductively construct a polynomial $P_N(x)$ with a large coefficient by applying Proposition~\ref{truncation}.

We first need to show that there exist $N$ which yield arbitrarily large regions. Let $N=p_1\dotsm p_n$ be generic, and let $S_j(N) = \{\mo{\sum_{i \in T} p_{i}^{-1}}{p_j} \mid T \subset [n]\backslash\{j\}\}$. Let us write $d(S_j(N))$ for the smallest difference between two elements of $S_j(N) \cup \{p_j\}$ (so that it is the smallest length of any region in the $j$th direction). We will examine what happens when we add to $p_j$ a multiple of $N_j$. Clearly this will not change $S_i(N)$ for any $i \neq j$.

\begin{lemma}\label{z}
Fix $T \subset [n]\backslash\{j\}$, and define $z_T$ to be the fractional part of $\sum_{i \in T}(1-\frac{1}{p_i}\mo{p_j^{-1}}{p_i})$. Then $p_j\cdot z_T + \sum_{i \in T} \frac 1{p_i}$ is an integer congruent to $\sum_{i \in T}p_i^{-1}$ modulo $p_j$. In particular, if $\sum_{i\in T} \frac{1}{p_i} < 1$, then $\mo{\sum_{i \in T} p_i^{-1}}{p_j}^+ = \lceil p_j \cdot z_T \rceil$.
\end{lemma}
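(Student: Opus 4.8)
The claim has two parts: first, that $p_j z_T + \sum_{i\in T}\frac1{p_i}$ is an integer congruent to $\sum_{i\in T}p_i^{-1}$ modulo $p_j$; and second, that under the hypothesis $\sum_{i\in T}\frac1{p_i}<1$ we get the clean formula $\mo{\sum_{i\in T}p_i^{-1}}{p_j}^+ = \lceil p_j z_T\rceil$. I would attack the first part by a direct manipulation of the defining expression for $z_T$. Write $w_T = \sum_{i\in T}\left(1-\frac1{p_i}\mo{p_j^{-1}}{p_i}\right)$, so $z_T$ is the fractional part of $w_T$, i.e. $z_T = w_T - \lfloor w_T\rfloor$. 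Then
\[
p_j z_T + \sum_{i\in T}\frac1{p_i} = p_j w_T - p_j\lfloor w_T\rfloor + \sum_{i\in T}\frac1{p_i} = \sum_{i\in T}\left(p_j - \frac{p_j}{p_i}\mo{p_j^{-1}}{p_i} + \frac1{p_i}\right) - p_j\lfloor w_T\rfloor.
\]
The key observation is that each summand $p_j - \frac{p_j}{p_i}\mo{p_j^{-1}}{p_i} + \frac1{p_i} = \frac{1}{p_i}\left(p_i p_j - p_j\mo{p_j^{-1}}{p_i} + 1\right) = \frac{1}{p_i}\left(p_ip_j + 1 - p_j\mo{p_j^{-1}}{p_i}\right)$, and by the identity $p_i\mo{p_i^{-1}}{p_j} + p_j\mo{p_j^{-1}}{p_i} = p_ip_j + 1$ (which appeared in Lemma~\ref{pq+1}), this equals $\frac{1}{p_i}\cdot p_i\mo{p_i^{-1}}{p_j} = \mo{p_i^{-1}}{p_j}$. (Here I should be careful whether $\mo{p_j^{-1}}{p_i}$ or $\mo{p_i^{-1}}{p_j}$ could be zero, but since $p_i, p_j$ are distinct primes greater than $1$, neither inverse is $0$.) Hence $p_j z_T + \sum_{i\in T}\frac1{p_i} = \sum_{i\in T}\mo{p_i^{-1}}{p_j} - p_j\lfloor w_T\rfloor$, which is manifestly an integer and manifestly congruent to $\sum_{i\in T}\mo{p_i^{-1}}{p_j} \equiv \sum_{i\in T} p_i^{-1}\pmod{p_j}$. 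This proves the first assertion.

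**Second part.** Now assume $\sum_{i\in T}\frac1{p_i}<1$. Call the integer just computed $m := p_j z_T + \sum_{i\in T}\frac1{p_i}$. We know $m\equiv \sum_{i\in T}p_i^{-1}\pmod{p_j}$, so $m \equiv \mo{\sum_{i\in T}p_i^{-1}}{p_j}\pmod{p_j}$, and I want to show $m$ is the representative in the range that makes $\mo{\cdot}{p_j}^+$ come out right — i.e. $1\le m\le p_j$. From the formula $m = p_j z_T + \sum_{i\in T}\frac1{p_i}$ with $z_T\in[0,1)$ the fractional part and $0<\sum_{i\in T}\frac1{p_i}<1$: if $T=\emptyset$ the sum $\sum_{i\in T}\frac1{p_i}=0$ but then $z_T = 0$ as well and $m=0$, which is a degenerate case — I should check whether the lemma is only intended for nonempty $T$, or interpret $\lceil p_j\cdot 0\rceil = 0$ consistently (and indeed $\mo{0}{p_j}^+$ would then need handling; most likely $T$ is implicitly nonempty or the convention works out). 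For nonempty $T$ we have $0 < m < p_j\cdot 1 + 1 = p_j + 1$, so $1\le m\le p_j$, and therefore $\mo{\sum_{i\in T}p_i^{-1}}{p_j}^+ = m$. Finally, since $0 < \sum_{i\in T}\frac1{p_i} < 1$, we have $p_j z_T < m < p_j z_T + 1$, i.e. $m = \lceil p_j z_T\rceil$ provided $p_j z_T$ is not itself an integer — but if $p_j z_T$ were an integer then $m - p_j z_T = \sum_{i\in T}\frac1{p_i}$ would be an integer strictly between $0$ and $1$, impossible. Hence $m = \lceil p_j z_T\rceil$, completing the proof.

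**Main obstacle.** The genuinely substantive step is the first one: recognizing that each summand in the rewritten expression collapses, via the reciprocity identity $p_i\mo{p_i^{-1}}{p_j} + p_j\mo{p_j^{-1}}{p_i} = p_ip_j+1$, to exactly $\mo{p_i^{-1}}{p_j}$. Once that telescoping-style simplification is seen, everything else is bookkeeping about fractional parts and the ranges of the quantities involved. The secondary care needed is the edge-case analysis — whether $T$ is allowed to be empty and whether any $\mo{\cdot}{\cdot}$ equals zero — but since all the $p_i$ are distinct primes these subtleties resolve immediately, and the $\mo{\cdot}{\cdot}^+$ notation was introduced precisely to paper over the $\equiv 0$ boundary. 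I do not expect any deeper difficulty.
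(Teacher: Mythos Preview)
Your proof is correct and follows essentially the same approach as the paper: the key step in both is to use the reciprocity identity $p_i\mo{p_i^{-1}}{p_j}+p_j\mo{p_j^{-1}}{p_i}=p_ip_j+1$ to rewrite each summand $p_j\bigl(1-\tfrac{1}{p_i}\mo{p_j^{-1}}{p_i}\bigr)+\tfrac{1}{p_i}$ as $\mo{p_i^{-1}}{p_j}$, then sum over $i\in T$. You simply spell out the fractional-part bookkeeping and the range argument for the ``in particular'' clause more carefully than the paper does.
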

\begin{proof}
Recall that $p_ip_j+1=p_i\mo{p_i^{-1}}{p_j}+p_j\mo{p_j^{-1}}{p_i}$, so that \[\mo{p_i^{-1}}{p_j} = p_j\left(1-\frac 1{p_i}\mo{p_j^{-1}}{p_i}\right)+\frac 1{p_i}.\]
Summing over all $i$ gives the result.
\end{proof}
Note that $z_T$ only depends on the residue of $p_j$ modulo $N_j$. This means that increasing $p_j$ by some multiple of $N_j$ will tend to keep the residues we care about in the same order while increasing the gaps in between them.

\begin{lemma}\label{enlarge}
Suppose that $\sum_{i=1}^n \frac{1}{p_i}<1$ and $N$ is generic. Then there exists $N'=p_1'p_2' \cdots p_n'$ and a positive constant $c<1$ such that for all $j$, $p_j'>p_j$, the corresponding elements of $S_j(N)$ and $S_j(N')$ are in the same order, and $\lfloor\frac{n}{2}\rfloor+1<cp_j'<d(S_j(N'))$.
\end{lemma}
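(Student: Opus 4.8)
The plan is to take $p_j' = p_j + m_j N_j$ for suitably large integers $m_j\geq 0$, one coordinate at a time, and track how this affects the sets $S_j(N)$. The key point, recorded in Lemma~\ref{z} and the remark following it, is that replacing $p_j$ by $p_j + m_j N_j$ does not change $S_i(N)$ for $i\neq j$ at all, and for the $j$th set it acts "linearly": for each $T\subset[n]\setminus\{j\}$ with $\sum_{i\in T}\frac1{p_i}<1$ (which holds for every such $T$ by the hypothesis $\sum_{i=1}^n\frac1{p_i}<1$), the corresponding element satisfies $\mo{\sum_{i\in T}p_i^{-1}}{p_j}^+ = \lceil p_j z_T\rceil$, where $z_T$ is the fractional part of $\sum_{i\in T}(1-\frac1{p_i}\mo{p_j^{-1}}{p_i})$ and $z_T$ depends only on $p_j \bmod N_j$, hence is unchanged when we add a multiple of $N_j$. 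Consequently, if we write $S_j(N)=\{\lceil p_j z_{T}\rceil : T\}$ and $S_j(N')=\{\lceil p_j' z_T\rceil : T\}$ with the same collection of constants $z_T\in[0,1)$, then as $p_j'\to\infty$ through the arithmetic progression $p_j + \mathbf{Z}_{\geq0}N_j$, the points $\lceil p_j' z_T\rceil$ behave like $p_j' z_T$ up to an error of $1$.

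First I would handle one coordinate $j$. Since $N$ is generic, the values $z_T$ are pairwise distinct (they determine the residues $\mo{\sum_{i\in T}p_i^{-1}}{p_j}$, which are distinct), and none of them is $0$ except for $T=\emptyset$, which gives the point $0$; also include the "wraparound" point $p_j$ corresponding to the full modulus. Let $\delta>0$ be the minimum gap between consecutive values in the finite set $\{z_T : T\}\cup\{0,1\}\subset[0,1]$. Then for $p_j'$ large, the points $\lceil p_j' z_T\rceil$ (together with $0$ and $p_j'$) appear in the same cyclic/linear order as the $z_T$ — because the true values $p_j' z_T$ are separated by at least $\delta p_j' > 2$ once $p_j' > 2/\delta$, so rounding up by at most $1$ cannot reorder them — and the gap between consecutive ones is at least $\delta p_j' - 2$. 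Hence $d(S_j(N')) \geq \delta p_j' - 2$. Choosing $m_j$ large enough that $p_j' = p_j + m_j N_j$ satisfies $p_j' > p_j$ and $\delta p_j' - 2 > \frac{\delta}{2} p_j'$, say, and also $\frac{\delta}{2}p_j' > \lfloor n/2\rfloor + 1$, we obtain $\lfloor n/2\rfloor + 1 < \frac{\delta}{2} p_j' < d(S_j(N'))$.

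Finally I would run this over all $j$ simultaneously. Since modifying $p_j$ only affects $S_j$ and leaves every $S_i$ ($i\neq j$) untouched, we may choose each $m_j$ independently, so set $p_j' = p_j + m_j N_j$ with each $m_j$ chosen as above; note that $N_j$ here refers to the original $N$, and since the choice is coordinatewise there is no circularity. It remains to pick the single constant $c<1$: take $c = \min_j \frac{\delta_j}{2}$ where $\delta_j$ is the minimum gap for coordinate $j$, shrunk if necessary to be less than $1$; then $cp_j' \leq \frac{\delta_j}{2}p_j' < d(S_j(N'))$ and $cp_j' \geq (\text{something we arranged to exceed}) \lfloor n/2\rfloor+1$. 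One should also double-check that $N'$ is still generic — this is automatic because the $z_T$ are distinct and the $S_j(N')$ are in the same order as $S_j(N)$, so all the $\mo{\sum_{i\in T}p_i^{-1}}{p_j}$ remain distinct, and $\deg P_{N'}<N'$ follows from $\sum 1/p_i' < \sum 1/p_i < 1$ via Proposition~\ref{176}. The main obstacle is the bookkeeping in the first paragraph: verifying carefully that the rounding $\lceil p_j' z_T\rceil$ does not disturb the order or collapse the gaps, which comes down to the elementary estimate that consecutive true values are separated by more than $2$ once $p_j'$ is large, together with invoking Lemma~\ref{z} to guarantee the $z_T$ really are fixed along the progression.
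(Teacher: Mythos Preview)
Your overall strategy is right and close to the paper's, but the step where you ``run this over all $j$ simultaneously'' with $N_j$ referring to the \emph{original} $N$ does not work. The claim that modifying $p_j$ leaves every $S_i$ ($i\neq j$) untouched is a one-at-a-time statement: replacing $p_j$ by $p_j+m_jN_j$ while the other primes stay fixed does preserve $S_i$, because $p_j'\equiv p_j\pmod{p_i}$. But once you also replace $p_i$ by $p_i'$, the set $S_i(N')$ is computed modulo $p_i'$, and now you would need $p_j'\equiv p_j\pmod{p_i'}$---which a multiple of the original $N_j$ (divisible by $p_i$, not $p_i'$) does not give. Equivalently, the $z_T$ for index $i$ depend on $\tfrac{1}{p_j}\mo{p_i^{-1}}{p_j}$ for $j\in T$, and these change when the other $p_j$ change. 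A quick check with $n=2$, $p_1=3$, $p_2=5$, $m_1=m_2=2$ already shows the predicted $S_1$ (namely $\{0,8\}$, from $z_{\{2\}}=3/5$) disagrees with the actual $S_1(N')=\{0,6\}$.

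The repair, which is what the paper does, is to proceed sequentially and at each step add a multiple of the \emph{current} product of the other primes; then each modification of $p_j$ leaves every already-processed $S_i$ (now mod $p_i'$) literally unchanged. Even with this fix, note that the $z_T$ for index $j$ at the moment you process $j$ need not equal the original ones (earlier primes have moved), so a constant $c=\min_j\delta_j/2$ built from the original gaps may not bound $d(S_j(N'))/p_j'$ from below. This is why the paper makes two passes: first push each integer gap up to $d(S_j)\ge 3$, then fix $c<\min_j 1/p_j''$, then enlarge again; the intermediate bound $d\ge 3$ supplies a uniform floor on the relevant $z$-gaps at the time of the second enlargement regardless of how they drifted. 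You should also invoke Dirichlet's theorem on primes in arithmetic progressions to keep the $p_j'$ prime (and hence pairwise coprime), which the paper does and your sketch omits.
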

\begin{proof}
We first construct $N''=p_1''p_2'' \cdots p_n''$ such that for all $j$, $p_j''>p_j$, the corresponding elements of $S_j(N)$ and $S_j(N'')$ are in the same order, and $d(S_j(N'')) \geq 3$.

Fix some $j$. By Lemma~\ref{z}, the elements of $S_j(N)$ (which are distinct since $N$ is generic) are in the same order as the $z_T$. Moreover, if we add a multiple of $N_j$ to any $p_j$ to get $p''_j$, this does not change the values (or order) of the $z_T$. By choosing $p''_j$ sufficiently large, we can ensure that the difference between any two $p''_j \cdot z_T$ is at least 3 (or as large as needed). Applying this process for all $j$ gives the result. (In order to ensure that the $p''_j$ are prime, we use the fact that there are infinitely many primes congruent to $p_j$ modulo $N_j$.)

Now choose $c$ to be smaller than each $\frac{1}{p_j''}$. Then, as above, replace each $p_j''$ with some $p_j'>\frac{\lfloor n/2\rfloor+1}{c}$ by adding multiples of the products of the other primes. We claim that the resulting $N'$ satisfies the desired properties. By the argument above, we need only check the desired inequalities. Therefore it suffices to show that $\frac{1}{p_j''} < \frac{d(S_j(N'))}{p_j'}$. Fix $j$, and suppose that we are replacing $p_j''$ with $p_j'$. Let the smallest difference $d(S_j(N'))$ between two elements of $S_j(N') \cup \{p_j'\}$ occur between $\lceil p_j' \cdot z_T \rceil$ and $\lceil p_j' \cdot z_{T'}\rceil$. (If the difference involves $p_j'$, take $1$ for the corresponding $z_T$.) Then $\frac{d(S_j(N'))+1}{p_j'} > |z_T-z_{T'}|$. But we know from before we replaced $p_j''$ that $|\lceil p_j'' \cdot z_T \rceil - \lceil p_j'' \cdot z_{T'} \rceil | \geq d(S_j(N''))$, so
$|z_T-z_{T'}| > \frac{d(S_j(N''))-1}{p_j''}.$ We find that
\[\frac{d(S_j(N'))}{p_j'} > |z_T-z_{T'}| - \frac{1}{p_j'} > \frac{d(S_j(N''))-1}{p_j''} - \frac{1}{p_j'} > \frac{2}{p_j''} - \frac{1}{p_j''} = \frac{1}{p_j''}.\qedhere\]
\end{proof}



By ensuring that $d(S_j(N'))$ is large, we are guaranteed that the regions in which the coefficients are constant are large. Recall that when we add another prime, the coefficients of the new polynomial can be written as signed sums of coefficients of the old polynomial. Therefore, having large regions will allow us to use the maximum coefficient many times in these sums, thereby generating a large coefficient in the new polynomial.

\begin{lemma} \label{central-binom}
Let $N=p_1p_2\cdots p_n$ be generic with $\sum_{i=1}^n \frac{1}{p_i} < 1$. Then there exists $N' = p_1'p_2'\cdots p_n'$ and $q$ prime such that $N'q$ is generic with $\sum_{i=1}^{n} \frac{1}{p_i'}+\frac{1}{q}<1$ such that the height of $P_{N'q}(x)$ is at least $\binom{n-1}{\left\lfloor\frac{n-1}{2}\right\rfloor}$ times the height of $P_N(x)$.
\end{lemma}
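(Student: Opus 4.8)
\section*{Proof plan for Lemma~\ref{central-binom}}

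The plan is to inflate the primes of $N$ so that the regions of Corollary~\ref{order} become enormous, and then to adjoin one further prime $q$ chosen with great care, extracting a large coefficient of $P_{N'q}$ from Proposition~\ref{truncation}. Applied with $(p,N)=(q,N')$, that proposition writes $a_{N'q}(k)$ as a partial sum of the $2^n$ signed terms $(-1)^{|T|}a_{N'}(m'_{k-N'_T})$, $T\subseteq[n]$, taken in increasing order of $m'_{k-N'_T}=\mo{(k-N'_T)q^{-1}}{N'}$ (here $N'_T=\sum_{i\in T}N'_i$), and every such partial sum over an initial segment is itself a coefficient of $P_{N'q}$ by the remark following that proposition, since $N'$ will be generic. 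The goal is therefore to force: (a) all $2^n$ arguments $m'_{k-N'_T}$ to land inside one region of $P_{N'}$, chosen so that $P_{N'}$ attains its height $h$ there, so that every $a_{N'}(m'_{k-N'_T})$ equals a common value $\varepsilon h$ with $\varepsilon=\pm1$; and (b) the order of the $T$ by $m'_{k-N'_T}$ to refine their order by $|T|$, so that $\{T:|T|\ge\lceil n/2\rceil\}$ is an initial segment. Then the partial sum over that initial segment is $\varepsilon h\sum_{|T|\ge\lceil n/2\rceil}(-1)^{|T|}=\varepsilon h(-1)^{\lceil n/2\rceil}\binom{n-1}{\lceil n/2\rceil-1}=\pm h\binom{n-1}{\lfloor(n-1)/2\rfloor}$, so the height of $P_{N'q}$ is at least $\binom{n-1}{\lfloor(n-1)/2\rfloor}$ times that of $P_{N'}$, which equals the height of $P_N$.

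For the setup, I would first apply Lemma~\ref{enlarge} --- whose proof allows the $p_j'$ to be taken arbitrarily large --- to get $N'=p_1'\cdots p_n'$ with $\sum 1/p_j'<1$, the same relative order of residues as $N$, and $d(S_j(N'))>c\,p_j'$ with each $p_j'$ huge, for a fixed $c>0$; then $N'$ is generic ($\deg P_{N'}<N'$ by Proposition~\ref{176}), and by Corollary~\ref{order} the coefficients of $P_{N'}$ and of $P_N$ are exactly the values of the same region-indexed function of Theorem~\ref{pn-coeffs}, so $P_{N'}$ has the same height $h$ as $P_N$. Next I would use Dirichlet to pick a large prime $q$ (large enough that $\sum 1/p_j'+1/q<1$) with $\mo{q^{-1}}{p_j'}=\beta_j$ for each $j$, where $\beta_j:=\operatorname{round}(c'p_j')$ for a small fixed $c'$ (say $c'=c/3$). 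The point of this choice is that the ``weights'' $w_j:=\beta_jN'_j=\beta_jN'/p_j'$ then all lie within a factor $1+2/n$ of one another --- and $1+2/n<\lceil n/2\rceil/(\lceil n/2\rceil-1)$ for $n\ge 3$ --- while $\beta_j$ stays far below $d(S_j(N'))$, and $\sum_jw_j<N'/2$.

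For the execution, observe that the position of $m'_{k-N'_T}$ in the $j$th coordinate of the diagram of $P_{N'}$ is $\alpha_j$ if $j\notin T$ and $\alpha_j-\beta_j\pmod{p_j'}$ if $j\in T$, where $\alpha_j=\mo{kq^{-1}(N'_j)^{-1}}{p_j'}$ depends only on $k\bmod N'$. Choosing each $\alpha_j$ inside the max-height region and at least $\beta_j$ past its left endpoint puts all $2^n$ points into that region, giving (a); and a short dichotomy --- using $\sum_jw_j<N'/2$ and the fact that adding $\sum_jw_j$ to $C:=\mo{kq^{-1}}{N'}$ shifts each $\alpha_j$ by $\beta_j$ and so keeps all points in the region (its side exceeding $2\beta_j$) --- produces a choice of $k\bmod N'$ with $C\ge\sum_jw_j$, hence with $m'_{k-N'_T}=C-\sum_{i\in T}w_i$ for all $T$, with no reduction mod $N'$. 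Then the $m'$-order of the $T$ is the reverse of the order by $\sum_{i\in T}w_i$, and since the $w_i$ lie within the factor $1+2/n$, the sum of the $\lceil n/2\rceil$ smallest of them exceeds the sum of the $\lceil n/2\rceil-1$ largest, so the $T$ with $|T|\ge\lceil n/2\rceil$ are precisely those with the largest values of $\sum_{i\in T}w_i$ --- an initial segment, giving (b). Finally one checks $N'q$ is generic: $\deg P_{N'q}<N'q$ by Proposition~\ref{176}, the residues $\mo{\sum_{i\in T}(p_i')^{-1}}{q}$ are distinct since $q>N'$, and modulo each $p_j'$ the required distinctness amounts to $S_j(N')$ and $S_j(N')+\beta_j$ being disjoint in $\mathbf{Z}/p_j'\mathbf{Z}$, which holds because $d(S_j(N'))\gg\beta_j$.

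The main obstacle is the tension built into (a) and (b): (b) wants the weights $w_j=\beta_jN'_j$ nearly equal, so that the $m'$-order sorts the index sets essentially by cardinality, whereas (a) wants each $\beta_j$ small compared with the region width $d(S_j(N'))$, so that all $2^n$ arguments fit inside one region. This is exactly what forces us first to enlarge $N$ (to make $d(S_j(N'))$ large) and then to take $\beta_j$ roughly proportional to $p_j'$ rather than equal to $1$; the attendant bookkeeping --- eliminating the wrap-around mod $N'$ and re-verifying genericity of $N'q$ --- is where the remaining care lies. (An alternative keeping $\beta_j=1$ is to force all $p_j'$ into a single interval $[P,P(1+\epsilon)]$ with $\epsilon<1/n$, so that $\sum_{i\in T}1/p_i'$ already refines the cardinality order, at the cost of invoking the distribution of primes in short intervals in arithmetic progressions.)
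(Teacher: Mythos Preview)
Your proposal is correct and follows essentially the same route as the paper: inflate $N$ via Lemma~\ref{enlarge}, choose $q$ so that $[q^{-1}]_{p_j'}\approx c'p_j'$, force all $2^n$ arguments into the maximal-height region, and exploit the near-equality of the weights $w_j=\beta_jN_j'$ to make $\{T:|T|\ge\lceil n/2\rceil\}$ an initial segment in the order of Proposition~\ref{truncation}. The one technical difference is in eliminating the wrap-around modulo $N'$: the paper observes that since each $a_{N'}(m'_{k-N_T'})=M\neq 0$, the converse of Proposition~\ref{zero} already forces the desired equality $m'_{k-N_T'}=[q^{-1}\bar k]_{N'}-\sum_{j\in T}w_j$ without any reduction, so no dichotomy or shift is needed; your shift argument also works, but note that the suggestion $c'=c/3$ need not give $\sum_j w_j<N'/2$ for general $n$, so you should instead take $c'<\min(c/3,\,1/(2n))$, and when setting up the dichotomy you should place each $\alpha_j$ at least $\beta_j$ from \emph{both} endpoints of the region (not just the left one).
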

\begin{proof}
First find $N'$ and $c$ as in Lemma~\ref{enlarge}, and let $M$ be the coefficient of maximum absolute value in $P_{N'}(x)$. Then, by the Chinese Remainder Theorem, choose $q$ such that $q>N'$ (which implies $\sum \frac{1}{p_i'}+\frac 1q < 1$) and $q^{-1} \equiv \lfloor c p_j' \rfloor \pmod{p_j'}$ for all $j$. Since $cp_j' < d(S_j(N'))$, $N'q$ is generic, and we can find an integer $\bar{k}$ such that the coefficients of $P_{N'}(x)$ corresponding to exponents $\mo{q^{-1}(\bar{k}- \sum_{j \in T} N_j')}{N'}$ are all equal to $M$. Since $M\neq 0$, by the converse to Proposition~\ref{zero},
\[\mo{q^{-1}(\bar{k}- \sum_{j \in T} N_j')}{N'} = \mo{q^{-1}\bar{k}}{N'} - \sum_{j \in T}(\lfloor cp_j' \rfloor  N_j').\]

Observe that
\[cN'-N_j' = (cp_j'-1)N_j' < \lfloor cp_j'\rfloor N_j' \leq cp_j' N_j' = cN'.\]
Let $s = \lfloor\frac{n}{2}\rfloor$, and suppose $T_1, T_2 \subset [n]$ with $|T_1| \leq s$ and $|T_2| > s$. Without loss of generality, assume $p_1'$ is the smallest $p_j'$. Then, since $cp_1'>s+1$,
\[
\sum_{j \in T_2}(\lfloor cp_j' \rfloor  N_j') > (s+1)(cN'-N_1') = csN'+(cp_1'-s-1)N_1' > csN' \geq \sum_{j \in T_1}(\lfloor cp_j' \rfloor  N_j')
\]
Thus all the exponents corresponding to sets $T_1$ of size at most $s$ are bigger than those corresponding to sets $T_2$ of size at least $s+1$. 
Choose $k \equiv \bar{k} \pmod {N'}$ such that
\[
\mo{q^{-1}(\bar{k}- \sum_{j \in T_1} N_j')}{N'} > kq^{-1} > \mo{q^{-1}(\bar{k}- \sum_{j \in T_2} N_j')}{N'}
\]
for all $T_1$ and $T_2$. (Such a $k$ exists because $q^{-1}N'<1$.) But now by Proposition~\ref{truncation},
\begin{align*}
a_{qN'}(k) &= \sum_{T \subset [n]} (-1)^{|T|} a_{N'}(\mo{q^{-1}(\bar{k}- \sum_{j \in T} N_j')}{N'}) \cdot \{\mo{q^{-1}(\bar{k}- \sum_{j \in T} N_j')}{N'}  \leq kq^{-1}\}\\
&= \sum_{\substack{T \subset [n]\\|T| > s}} (-1)^{|T|} M
= \sum_{i=s+1}^{n} (-1)^i \binom{n}{i} M
= (-1)^{s+1} \binom{n-1}{n-s-1} M.
\end{align*}
Thus we have found the desired coefficient with large absolute value.
\end{proof}

By iterating this procedure, we can construct $P_N(x)$ with large height. We obtain the following result.

%

\begin{thm}
$M(n) = 2^{\frac 12 n^2 + O(n\log n)}$.
\end{thm}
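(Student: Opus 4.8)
The plan is to combine the upper bound already in hand with a matching lower bound obtained by iterating Lemma~\ref{central-binom}. Proposition~\ref{general-bound} gives $M(n) \le 2^{\frac12 n^2 + O(n\log n)}$ directly, so the entire remaining content is the lower bound $M(n) \ge 2^{\frac12 n^2 - O(n\log n)}$, which when combined with the upper bound yields the stated asymptotic.

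For the lower bound I would argue inductively. Let $h(m)$ be the largest height of $P_N(x)$ over generic $N$ with $m$ prime factors satisfying $\sum 1/p_i < 1$. As a base case take $N = 2\cdot 3$: then $P_N(x) = \Phi_6(x) = 1 - x + x^2$ has height $1$, and one checks directly that $N$ is generic and $\tfrac12 + \tfrac13 < 1$, so $h(2) \ge 1$. For the inductive step, given a generic $N = p_1\cdots p_m$ with $\sum 1/p_i < 1$ achieving height $h(m)$, Lemma~\ref{central-binom} produces a generic $N'q$ with $m+1$ prime factors, still satisfying $\sum 1/p_i' + 1/q < 1$, whose height is at least $\binom{m-1}{\lfloor (m-1)/2\rfloor}$ times the height of $P_N(x)$. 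Hence $h(m+1) \ge \binom{m-1}{\lfloor (m-1)/2\rfloor}\, h(m)$; since the hypotheses of Lemma~\ref{central-binom} are exactly the data it reproduces, we may iterate to get
\[
M(n) \ge h(n) \ge \prod_{m=2}^{n-1} \binom{m-1}{\lfloor (m-1)/2 \rfloor} = \prod_{k=1}^{n-2} \binom{k}{\lfloor k/2 \rfloor}.
\]

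It remains to estimate this product. Using the elementary bound $\binom{k}{\lfloor k/2\rfloor} \ge 2^k/(k+1)$ (it is the largest of the $k+1$ entries of row $k$, which sum to $2^k$), we obtain
\[
\log_2 M(n) \ge \sum_{k=1}^{n-2}\bigl(k - \log_2(k+1)\bigr) = \frac{(n-2)(n-1)}{2} - \log_2\bigl((n-1)!\bigr).
\]
By Stirling's formula $\log_2\bigl((n-1)!\bigr) = n\log_2 n + O(n)$, so the right-hand side equals $\tfrac12 n^2 - n\log_2 n + O(n) = \tfrac12 n^2 + O(n\log n)$. Together with Proposition~\ref{general-bound} this gives $\log_2 M(n) = \tfrac12 n^2 + O(n\log n)$, as claimed.

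The estimates above are routine; the substantive work was already carried out in Lemmas~\ref{enlarge} and \ref{central-binom}. The only thing to watch is bookkeeping: one must confirm that genericity together with $\sum 1/p_i < 1$ is precisely the package that both enters and exits Lemma~\ref{central-binom}, so the induction never stalls, and one should track the index carefully (the passage from $m$ to $m+1$ primes contributes the factor $\binom{m-1}{\lfloor (m-1)/2\rfloor}$, not $\binom{m}{\lfloor m/2\rfloor}$). This off-by-one is immaterial for the $\frac12 n^2 + O(n\log n)$ conclusion but would matter for any attempt to sharpen the second-order term.
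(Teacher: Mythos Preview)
Your proof is correct and follows essentially the same approach as the paper: iterate Lemma~\ref{central-binom} starting from a small generic base case to get $M(n)\ge\prod_{k=1}^{n-2}\binom{k}{\lfloor k/2\rfloor}$, then estimate this product and combine with the upper bound from Proposition~\ref{general-bound}. The only cosmetic difference is that the paper applies Stirling's formula directly to each $\log\binom{i}{\lfloor i/2\rfloor}$, whereas you use the elementary inequality $\binom{k}{\lfloor k/2\rfloor}\ge 2^k/(k+1)$ and then bound $\log_2((n-1)!)$; both routes give $\tfrac12 n^2 + O(n\log n)$.
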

\begin{proof}
By Lemma~\ref{central-binom}, we find that $M(n)$ is bounded below by the product of the first $n-2$ central binomial coefficients. By Stirling's formula,
\[\log \binom{i}{i/2} = i \log i - i - 2\left(\frac{i}{2}\log \frac{i}{2}-\frac{i}{2}\right)+O(\log i) = i \log 2 + O(\log i).\]
Summing over the first $n-2$ values of $i$ gives $\log 2 \cdot \frac{n^2}{2} + O(n \log n)$, so exponentiating gives the desired lower bound. Combining this with the upper bound from Proposition~\ref{general-bound} gives the result.
\end{proof}

\section{$P_N(x)$ with height 1}

While we have shown that the maximum height of a polynomial $P_N(x)$ with $n$ distinct prime factors grows exponentially in $n^2$, we will now show that the minimum height of such a polynomial is in fact 1. We first describe how to construct such an $N$.

Recall that if $N=p_1\cdots p_n$, then $S_j(N) = \{\mo{\sum_{i\in T}p_i^{-1}}{p_j} \mid T \subset [n]\backslash\{j\}\}$, and $d(S_j(N))$ is the smallest difference between two distinct elements of $S_j(N)\cap \{p_j\}$.

\begin{lemma} \label{pickprimes}
There exist primes $p_1<p_2<\dots<p_n$ satisfying the following conditions for $1\leq u < v \leq n$:
\begin{enumerate}
\item[(a)] $\mo{p_v^{-1}}{p_u} < d(S_u(p_1p_2\cdots p_{v-1}))$;
\item[(b)] $p_v-\mo{p_u^{-1}}{p_v} < d(S_v(p_1p_2\cdots p_{u-1}p_v))$;
\item[(c)] $\sum_{i=1}^n p_i^{-1} < 1$.
\end{enumerate}
\end{lemma}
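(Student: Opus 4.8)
The plan is to construct the primes $p_1 < p_2 < \dots < p_n$ one at a time, greedily, maintaining at each stage the property that the current partial product $N^{(m)} = p_1 p_2 \cdots p_m$ is generic and that all the inequalities (a) and (b) that only involve indices $\leq m$ already hold. The key observation, which I would extract as the engine of the induction, is the stability phenomenon already used in Lemmas~\ref{z} and \ref{enlarge}: once $p_1, \dots, p_{v-1}$ have been chosen, the quantities $d(S_u(p_1\cdots p_{v-1}))$ for $u < v$ are fixed, and adding $p_v$ only introduces new residues $\mo{\sum_{i \in T}p_i^{-1}}{p_u}$ (those $T$ containing $v$) into $S_u$; by Lemma~\ref{z} these new residues are controlled by $\lceil p_u \cdot z_T\rceil$ where $z_T$ depends only on $p_v \bmod N_v$ — or more relevantly, on $\mo{p_v^{-1}}{p_u}$. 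So condition (a) literally says: the new residues introduced into $S_u$ must stay within distance $d(S_u(p_1\cdots p_{v-1}))$ of the "base" residues — which, since those residues are themselves shifts of the old ones by $\mo{p_v^{-1}}{p_u}$, is exactly the requirement that the shift be small. Symmetrically (b) controls the residues in $S_v$ itself, which are governed by the $\mo{p_u^{-1}}{p_v}$.

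Concretely, I would proceed as follows. Pick $p_1$ arbitrarily (say $p_1 = 2$, or better a large prime to help with (c) later). Having chosen $p_1 < \dots < p_{v-1}$ generic, I want to choose $p_v$ so that: (i) $\mo{p_v^{-1}}{p_u}$ is small — smaller than the fixed quantity $d(S_u(p_1\cdots p_{v-1}))$ — for every $u < v$; (ii) $p_v - \mo{p_u^{-1}}{p_v}$ is small for every $u < v$; and (iii) $p_v$ is large enough that $p_v > $ (everything chosen so far) and large enough to keep $\sum p_i^{-1}$ summable toward (c). The crucial point is that requirements (i) and (ii) together are a finite list of congruence conditions on $p_v$: by the Chinese Remainder Theorem I can prescribe $p_v \bmod p_u$ for each $u < v$. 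To make $\mo{p_v^{-1}}{p_u}$ equal to some small target $t_u \geq 1$ I set $p_v \equiv t_u^{-1} \pmod{p_u}$; to make $p_v - \mo{p_u^{-1}}{p_v}$ small I need $\mo{p_u^{-1}}{p_v}$ close to $p_v$, i.e. $p_u \cdot (\text{something small}) \equiv -1$-ish mod $p_v$ — but this is a condition relating $p_v$ to itself, so instead I observe $p_v - \mo{p_u^{-1}}{p_v} = \mo{-p_u^{-1}}{p_v}{}$-type quantity which is small exactly when $\mo{p_u^{-1}}{p_v}$ is large, and I can instead arrange this by choosing $p_v$ in a residue class mod $p_u$ making $p_u^{-1} \bmod p_v$ land near $p_v$; the cleanest route is to use the identity $p_u \mo{p_u^{-1}}{p_v} + p_v \mo{p_v^{-1}}{p_u} = p_u p_v + 1$ from Lemma~\ref{pq+1}, which gives $p_v - \mo{p_u^{-1}}{p_v} = \frac{p_v(\mo{p_v^{-1}}{p_u} - 1) + 1}{p_u}$, so $p_v - \mo{p_u^{-1}}{p_v}$ is small precisely when $\mo{p_v^{-1}}{p_u} = 1$, i.e. when $p_v \equiv 1 \pmod{p_u}$. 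So in fact the single choice $p_v \equiv 1 \pmod{p_u}$ for all $u < v$ makes $\mo{p_v^{-1}}{p_u} = 1$ (handling (a), since $d(S_u(\cdots)) \geq 1$ is automatic once things are generic — and one can always enlarge the earlier primes via Lemma~\ref{enlarge} to make $d(S_u)$ as large as needed, say exceeding $n$, before committing) and simultaneously makes $p_v - \mo{p_u^{-1}}{p_v} = \frac{1}{p_u}\cdot(p_v \cdot 0 + 1) = 1$, handling (b). By Dirichlet's theorem there are infinitely many primes $\equiv 1 \pmod{\prod_{u<v} p_u}$, so I can also take $p_v$ as large as I like, ensuring $p_v > p_{v-1}$, genericity of the new product (a finite set of residues to avoid), and $p_v > 2^v$ say, which forces $\sum p_i^{-1} \leq \sum 2^{-i} < 1$, giving (c).

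The main obstacle — and the reason the greedy step must be sequenced carefully — is that conditions (a) and (b) refer to $d(S_u(p_1\cdots p_{v-1}))$ and $d(S_v(p_1\cdots p_{u-1}p_v))$ respectively, quantities that are genuinely determined only \emph{after} enough primes are chosen, and genericity must be preserved throughout. I would handle this by interleaving an "enlargement" step (Lemma~\ref{enlarge}) into the induction: before fixing $p_v$, I may need to first replace $p_1, \dots, p_{v-1}$ by larger primes in the same residue classes (which by Lemma~\ref{z}/\ref{enlarge} preserves the order of all residues, hence preserves all previously-verified instances of (a) and (b), while inflating every $d(S_u)$ past any desired bound such as $n$). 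Then choosing $p_v \equiv 1$ mod the product of the (enlarged) earlier primes makes every new shift equal to $1 < d(S_u)$, so (a) holds; and $p_v - \mo{p_u^{-1}}{p_v} = 1 < d(S_v(\cdots))$ as soon as the latter exceeds $1$, which genericity guarantees and which we can likewise inflate. One must double-check that enlarging $p_1, \dots, p_{v-1}$ does not disturb (c), but since we only ever need finitely many enlargements and each keeps $\sum 1/p_i$ bounded below $1$ (we have slack from $p_v > 2^v$), this is fine. Verifying that "order of residues preserved $\Rightarrow$ previously-checked inequalities still hold" is the one spot demanding care, since (a) and (b) are inequalities between residues and a gap $d(S)$, not merely order statements — but as noted they reduce to the shifts $\mo{p_v^{-1}}{p_u}$ and $p_v - \mo{p_u^{-1}}{p_v}$ being equal to $1$, which is a congruence condition stable under enlargement of the other primes.
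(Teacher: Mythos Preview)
Your overall strategy --- induct, enlarge the previously chosen primes via Lemma~\ref{enlarge} so that all gaps exceed~$1$, then take $p_v\equiv 1\pmod{p_1\cdots p_{v-1}}$ large by Dirichlet --- is exactly the paper's approach, and your treatment of~(a) and~(c) is fine. The problem is your verification of~(b), which rests on an arithmetic slip.

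From $p_u\mo{p_u^{-1}}{p_v}+p_v\mo{p_v^{-1}}{p_u}=p_up_v+1$ one gets
\[
p_v-\mo{p_u^{-1}}{p_v}=\frac{p_v\,\mo{p_v^{-1}}{p_u}-1}{p_u},
\]
not $\frac{p_v(\mo{p_v^{-1}}{p_u}-1)+1}{p_u}$. So when $\mo{p_v^{-1}}{p_u}=1$ (i.e.\ $p_v\equiv 1\pmod{p_u}$) the left side is $(p_v-1)/p_u$, which is \emph{large}, not~$1$. Condition~(b) is therefore far from automatic: you need $(p_v-1)/p_u<d\bigl(S_v(p_1\cdots p_{u-1}p_v)\bigr)$, and both sides scale with~$p_v$, so no amount of enlarging $p_v$ helps. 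What is actually required is control over the \emph{relative} sizes of the $p_u$.

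The paper closes this gap by additionally arranging (during the enlargement step) that $p_{i+1}>2p_i$ for $1\le i\le n-2$. With $p_n\equiv 1\pmod{p_1\cdots p_{n-1}}$ one then has $p_n-\mo{p_u^{-1}}{p_n}=\lfloor p_n/p_u\rfloor$, and the doubling condition forces these quantities to be so well separated that one can compute $d\bigl(S_n(p_1\cdots p_{u-1}p_n)\bigr)=\lfloor p_n/p_{u-1}\rfloor$ exactly, whence~(b) reduces to $\lfloor p_n/p_u\rfloor<\lfloor p_n/p_{u-1}\rfloor$. Your argument is missing precisely this ingredient.

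A smaller point: your claim that ``the shifts being equal to~$1$ is a congruence condition stable under enlargement of the other primes'' is not literally correct --- replacing $p_u$ by $p_u+cN_u$ will in general destroy $p_v\equiv 1\pmod{p_u}$. What \emph{is} stable under the enlargement of Lemma~\ref{enlarge} is the relative order of the elements of each $S_i$, and the paper observes that conditions~(a) and~(b) are equivalent to such an ordering condition; that is the right invariant to track.
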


Note that conditions (a) and (b) are equivalent to specifying a particular ordering of $S_i(p_1p_2\dots p_n)$ for each $i$.

\begin{proof}
For $n=2$, take $p_1<p_2$ to be any distinct primes.

Suppose that we have constructed $p_1, \dots, p_{n-1}$ satisfying the given conditions. As in Lemma~\ref{enlarge}, we can, in order, increase each $p_i$ by a multiple of the others so that the orders of all the $S_i(p_1p_2\dots p_{n-1})$ are preserved, $d(S_i(p_1p_2\dots p_{n-1})) > 1$ for all $i$, and $p_{i+1}>2p_i$ for $1 \leq i \leq n-2$.

Now let $p_n \equiv 1 \pmod {p_1p_2\dots p_{n-1}}$ be a prime large enough to satisfy (c). We need only check conditions (a) and (b) when $v=n$. Condition (a) is obviously satisfied. For condition (b), note that since $p_i\mo{p_i^{-1}}{p_n} + p_n\mo{p_n^{-1}}{p_i} = p_ip_n+1$,
\[p_n-\mo{p_i^{-1}}{p_n} = \frac{p_n\mo{p_n^{-1}}{p_i}-1}{p_i} = \frac{p_n-1}{p_i}=\lfloor p_i^{-1}p_n\rfloor.\]
Then $d(S_n(p_1p_n))=\min\{\lfloor p_1^{-1}p_n\rfloor, \lfloor (1-p_1^{-1})p_n\rfloor\} = \lfloor p_1^{-1}p_n\rfloor$. Since $\lfloor p_2^{-1}p_n \rfloor \leq \lfloor \frac12 p_1^{-1}p_n\rfloor \leq \frac12 \lfloor p_1^{-1}p_n \rfloor$, $d(S_n(p_1p_2p_n)) = \lfloor p_2^{-1}p_n \rfloor$. Continuing in this manner, we see that $d(S_n(p_1p_2\dots p_ip_n)) = \lfloor p_i^{-1}p_n \rfloor < \lfloor p_{i-1}^{-1}p_n \rfloor = d(S_n(p_1p_2\dots p_{i-1}p_n))$ for all $i$, as desired.
\end{proof}

We claim that if we let $N=p_1p_2\cdots p_n$, then the height of $P_N(x)$ will be 1. The following lemma will allow us to work with the ordering of the $S_i(p_1p_2\dots p_n)$ more explicitly.

Let $h_i^N(k)= h_i(k) = \mo{kN_i^{-1}}{p_i}$, and let $x_i^N(k)=x_i(k)$ be the $i$th coordinate of the region containing $k$. In other words, $x_i(k)+1$ is the number of elements of $S_i(N)$ that are at most $h_i(k)$, so that $x_i(k)$ ranges from 0 to $2^n-1$. (We will sometimes think of $h_i(k)$ as a residue module $p_i$.)

For subsets $V, W \subset [n]\backslash i$, let us write $V \prec_i h \prec_i W$ (and $V \prec_i W$) if \[\mo{\textstyle\sum_{j \in V} p_j^{-1}}{p_i} \leq h < \mo{\textstyle\sum_{j \in W} p_j^{-1}}{p_i}^+.\] Then let  $V^N_i(k) = V_i(k)$ and $W^N_i(k)=W_i(k)$ be the subsets of $[n]\backslash\{i\}$ such that the equations $V_i(k) \prec_i h_i(k) \prec_i W_i(k)$ define the region containing $k$.

\begin{lemma}\label{mod4}
Let $N=p_1p_2\dots p_n$ be defined as in Lemma~\ref{pickprimes}. For $i=n$, $n-1$, or $n-2$, the set $V_i(k) \cap \{n-2, n-1, n\}$ is determined by the residue of $x_i(k)$ modulo 4. Specifically, suppose $V$ and $W$ are subsets of $[n-3]$ such that $V \prec_i W$.
\begin{enumerate}
\item[(i)] If $i=n-2$, then \[V \prec_{n-2} V\cup \{n\} \prec_{n-2} V\cup\{n-1\} \prec_{n-2} V\cup\{n-1,n\} \prec_{n-2} W.\]
\item[(ii)] If $i=n-1$, then
\[V \prec_{n-1} V\cup \{n\} \prec_{n-1} W\cup\{n-2\} \prec_{n-1} W\cup\{n-2,n\} \prec_{n-1} W.\]
\item[(iii)] If $i=n$, then
\[V \prec_{n} W\cup \{n-2,n-1\} \prec_{n} W\cup\{n-2\} \prec_{n} W\cup\{n-1\} \prec_{n} W.\]
\end{enumerate}
\end{lemma}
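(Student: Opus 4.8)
The plan is to reduce the three assertions to elementary bookkeeping about how the residues $\mo{\sum_{j\in V}p_j^{-1}}{p_i}$, for $V\subset[n-3]$, get perturbed when $V$ is allowed to also contain indices from $\{n-2,n-1,n\}\setminus\{i\}$. Adjoining a single index $m$ to $V$ replaces $\mo{\sum_{j\in V}p_j^{-1}}{p_i}$ by that residue shifted by $\mo{p_m^{-1}}{p_i}$ modulo $p_i$, and the sign of the effective shift depends on whether $m$ exceeds $i$. If $m>i$, then condition~(a) of Lemma~\ref{pickprimes}, applied with $u=i$ and $v=m$, gives $0<\mo{p_m^{-1}}{p_i}<d(S_i(p_1\cdots p_{m-1}))$, so it is an upward shift by a tiny amount; if $m<i$, then condition~(b), applied with $u=m$ and $v=i$, gives $0<p_i-\mo{p_m^{-1}}{p_i}<d(S_i(p_1\cdots p_{m-1}p_i))$, so, read cyclically modulo $p_i$, it is a downward shift by a tiny amount. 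Thus in case~(i) both extra indices $n-1,n$ produce small upward shifts; in case~(ii) the index $n$ produces a small upward shift while $n-2$ produces a small downward shift; and in case~(iii) both of $n-2,n-1$ produce small downward shifts. I would record these signs first.

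The next step is a comparison of the two shift magnitudes occurring in each case. Write $S^{(0)}=\{\mo{\sum_{j\in V}p_j^{-1}}{p_i}:V\subset[n-3]\}$, let $S^{(1)}$ be the residue set obtained by additionally allowing the one of the two extra indices that produces the shift of larger absolute value, and let $S^{(2)}=S_i(N)$; these form a nested chain $S^{(0)}\subset S^{(1)}\subset S^{(2)}$. On the one hand, $S^{(1)}$ contains, for every $V$, both $\mo{\sum_{j\in V}p_j^{-1}}{p_i}$ and its perturbation by the larger shift, which differ by the larger shift magnitude; hence $d(S^{(1)})$ is at most that magnitude. On the other hand, the smaller shift magnitude is $<d(S^{(1)})$ by the appropriate instance of condition~(a) or (b) (in each case $S^{(1)}$ is exactly the set $S_i(\cdot)$ appearing there). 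Combining these with the bound $|\text{larger shift}|<d(S^{(0)})$ (again condition~(a) or (b)) yields the chain $|\text{smaller shift}|<d(S^{(1)})\le|\text{larger shift}|<d(S^{(0)})$. In all three cases the extra prime with the smaller subscript turns out to give the larger shift, which is what dictates the order in which the two ``intermediate'' sets appear in (i)--(iii); for instance, in case~(i) this chain contains $\mo{p_n^{-1}}{p_{n-2}}<\mo{p_{n-1}^{-1}}{p_{n-2}}$.

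To conclude, I would fix $V$ and $W$ as in the statement with $V\prec_i W$; by monotonicity it suffices to treat the case in which $W$ is the immediate successor of $V$ in the cyclic order on $S^{(0)}\cup\{p_i\}$ (with $p_i$ in the role of $W$, via the $\mo{\cdot}{p_i}^+$ convention, when $\mo{\sum_{j\in V}p_j^{-1}}{p_i}$ is maximal in $S^{(0)}$). One then checks the four $\prec_i$-links of the relevant chain in turn. Given the sign and magnitude data above, the links not involving $\mo{\sum_{j\in W}p_j^{-1}}{p_i}^+$ are immediate: a single shift cannot carry a point past the nearest element of the coarser residue set it sits over, and the order of the two intermediate points is forced by which shift is larger. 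The one link requiring a moment's care is the claim that the residue obtained by adjoining \emph{both} extra indices still precedes $\mo{\sum_{j\in W}p_j^{-1}}{p_i}^+$: here one observes that the residue obtained by adjoining only the larger-shift index lies in $S^{(1)}$, that its successor in $S^{(1)}\cup\{p_i\}$ is more than $|\text{smaller shift}|$ away (since $|\text{smaller shift}|<d(S^{(1)})$), and that $\mo{\sum_{j\in W}p_j^{-1}}{p_i}^+$ itself lies in $S^{(1)}\cup\{p_i\}$ and already exceeds that residue. This shows that exactly the three elements listed in (i)/(ii)/(iii) are inserted, in order, between each consecutive pair of $S^{(0)}$; hence $x_i(k)$ increases by exactly four across each gap of $S^{(0)}$, and $V_i(k)\cap\{n-2,n-1,n\}$ cycles through the four listed subsets in step with $x_i(k)$, so it depends only on $x_i(k)$ modulo $4$.

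I expect the main obstacle to be organizing the wraparound and degenerate cases cleanly rather than any deep difficulty. Because the downward shifts can carry a residue below $0$, all the order relations must be read in the cyclic order with the $\mo{\cdot}{p_i}^+$ convention rather than as plain inequalities, and the cases $V=\varnothing$, $W=\varnothing$, or $\mo{\sum_{j\in V}p_j^{-1}}{p_i}$ maximal in $S^{(0)}$ each require the word ``successor'' to be interpreted as $p_i$. Once it is arranged that every shift magnitude that occurs is strictly smaller than every gap of the coarser residue set over which that shift acts, the rest is mechanical.
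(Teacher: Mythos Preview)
Your proposal is correct and follows essentially the same approach as the paper. The paper's proof is a single sentence that merely records which instances of conditions~(a) and~(b) from Lemma~\ref{pickprimes} drive each case (namely $(u,v)=(n-2,n-1),(n-2,n)$ for~(i); $(u,v)=(n-2,n-1)$ in~(b) and $(u,v)=(n-1,n)$ in~(a) for~(ii); $(u,v)=(n-2,n),(n-1,n)$ in~(b) for~(iii)); your $S^{(0)}\subset S^{(1)}\subset S^{(2)}$ bookkeeping and the shift-magnitude comparison simply spell out why those citations suffice.
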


For instance, if $x_i(k)\equiv 0 \pmod 4$ for $i=n$, $n-1$, or $n-2$, then $V_i(k) \cap \{n-2, n-1, n\} = \varnothing$.

\begin{proof}
This follows easily from Lemma~\ref{pickprimes}: (i) follows from condition (a) when $(u,v) = (n-2, n-1)$ and $(n-2, n)$, (ii) follows from (b) when $(u,v) = (n-2, n-1)$ and (a) when $(u,v)=(n-1,n)$, and (iii) follows from (b) when $(u,v) = (n-2, n)$ and $(n-1, n)$.
\end{proof}

We will also need the following result in the style of Proposition~\ref{pn}.

\begin{prop} \label{two}
Let $p_i$ and $p_j$ be distinct primes dividing $N$. Then modulo $x^N-1$,
\[P_N(x) \equiv \frac{1-x^{\mo{p_j^{-1}}{p_i} N_i}}{1-x^{N_i}}P_{N_i}(x^{p_i}) \cdot \prod_{k \neq i, j} (1-x^{N_{ik}}) + 
x^{\mo{p_j^{-1}}{p_i}N_i}\cdot \frac{x^N-x^{\mo{p_i^{-1}}{p_j}  N_j}}{1-x^{N_j}}P_{N_j}(x^{p_j}) \cdot \prod_{k \neq i, j} (1-x^{N_{jk}}).\]
\end{prop}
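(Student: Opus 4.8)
The plan is to mimic the proof of Proposition~\ref{pn}, but now organized around the single pair $(i,j)$ rather than a full tournament $S$. First I would clear denominators: multiplying through by the common denominator $\prod_{\ell=1}^n(1-x^{N_\ell})$, it suffices to show that, modulo $\prod_{\ell=1}^n(1-x^{N_\ell})$,
\[
(1-x^{N})\prod_{1\leq a<b\leq n}(1-x^{N_{ab}})
\equiv
(1-x^{\mo{p_j^{-1}}{p_i}N_i})\,\tilde P_{N_i}(x^{p_i})\!\!\prod_{k\neq i,j}(1-x^{N_{ik}})
+ x^{\mo{p_j^{-1}}{p_i}N_i}(x^N-x^{\mo{p_i^{-1}}{p_j}N_j})\,\tilde P_{N_j}(x^{p_j})\!\!\prod_{k\neq i,j}(1-x^{N_{jk}}),
\]
where $\tilde P_{N_i}(y)$ denotes $P_{N_i}(y)$ times its own denominator $\prod_{\ell\neq i}(1-y^{N_{i\ell}/p_i}) = \prod_{\ell\neq i}(1-x^{N_{i\ell}})$ evaluated appropriately — in other words, using $P_{N_i}(x^{p_i})\prod_{\ell\neq i}(1-x^{N_{i\ell}}) = (1-x^N)\prod_{k<\ell;\,k,\ell\neq i}(1-x^{N_{k\ell}})$ from the definition of $P_{N_i}$. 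After substituting these product forms, both sides become explicit products of cyclotomic-type binomials, and, exactly as in Proposition~\ref{pn}, it suffices to check the congruence modulo each $(1-x^{N_{a_1\cdots a_m}})^m$ as $\{a_1,\dots,a_m\}$ ranges over nonempty subsets of $[n]$, since $\prod_\ell(1-x^{N_\ell})$ is the lcm of these.

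The heart of the argument is then a divisibility bookkeeping identical in spirit to the one in Proposition~\ref{pn}, so I would carry it out by cases on $m$ and on whether the subset $\{a_1,\dots,a_m\}$ contains $i$, $j$, both, or neither. For $m\geq 3$, or for $m=2$ with the subset not equal to a relevant pair, both sides are divisible by $(1-x^{N_{a_1\cdots a_m}})^m$ and there is nothing to check — one just counts that the binomials $1-x^{N_{ab}}$, $1-x^N$, and the factors coming from $P_{N_i}(x^{p_i})$, $P_{N_j}(x^{p_j})$, together with the prefactors $1-x^{\mo{p_j^{-1}}{p_i}N_i}$ and $x^{\mo{p_j^{-1}}{p_i}N_i}$ (which contributes no roots of unity), supply enough factors of the relevant root. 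The interesting cases are $m=1$ (where, after reducing exponents mod $N_\ell$, one uses $\mo{p_i^{-1}}{p_j}\cdot N_j\equiv N_{ij}\pmod{N_i}$ just as in Proposition~\ref{pn}) and the single case $m=2$ with $\{a_1,a_2\}=\{i,j\}$. Setting $y=x^{N_{ij}}$ and reducing modulo $(1-y)^2$, the right-hand side collapses — the $P_{N_i}$ and $P_{N_j}$ factors reduce to their constant terms, namely $1$, by the $m=1$ computation applied inside — and one is left needing
\[
(1-y^{p_i\mo{p_i^{-1}}{p_j}}) + y^{\mo{p_j^{-1}}{p_i}p_j}\,(y^{p_ip_j} - y^{p_j\mo{p_i^{-1}}{p_j}}) \equiv 1-y \pmod{(1-y)^2},
\]
but since $y^{\mo{p_j^{-1}}{p_i}p_j}\equiv 1\pmod{1-y}$ and the whole second summand is already divisible by $1-y$, the extra factor $y^{\mo{p_j^{-1}}{p_i}p_j}$ can be dropped modulo $(1-y)^2$, and this reduces to exactly Lemma~\ref{pq+1} (with $p,q$ there being $p_i,p_j$).

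The main obstacle I anticipate is not any single hard step but getting the reduction of the $P_{N_i}(x^{p_i})$ and $P_{N_j}(x^{p_j})$ factors right: one must be careful that, when working modulo $(1-y)^2$ with $y=x^{N_{ij}}$, the polynomial $P_{N_i}(x^{p_i})$ — which has many binomial factors when written over its denominator — really does reduce to $1$, and that the prefactor $x^{\mo{p_j^{-1}}{p_i}N_i}$ in the second term does not interfere with the lower-order analysis in the $m=1$ and $m\geq 3$ cases (it cannot, since it is a monomial and contributes no cyclotomic factors, but the exponent bookkeeping still needs to absorb it). Once the cases $m=1$ and $m=2,\{i,j\}$ are dispatched via the $N_{ij}\pmod{N_i}$ congruence and Lemma~\ref{pq+1}, the remaining cases are purely a matter of verifying the inequality $\binom{m}{2}\geq$ (number of needed factors), which is the same count as in Proposition~\ref{pn}.
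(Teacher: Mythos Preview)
Your approach is viable but takes a considerably longer road than the paper does, and your divisibility counting has a real gap.

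\textbf{The paper's route.} The paper observes that both terms on the right-hand side share the common factor $P_N(x)/(1-x^{N_{ij}})$: writing out $P_{N_i}(x^{p_i})$ and $P_{N_j}(x^{p_j})$ from the definition, each term becomes $\dfrac{P_N(x)}{1-x^{N_{ij}}}$ times a simple binomial expression. So the whole right side equals $\dfrac{P_N(x)}{1-x^{N_{ij}}}\cdot B$ where
\[
B=(1-x^{\mo{p_j^{-1}}{p_i}N_i})+x^{\mo{p_j^{-1}}{p_i}N_i}(x^N-x^{\mo{p_i^{-1}}{p_j}N_j}).
\]
A short direct computation using $\mo{p_j^{-1}}{p_i}N_i+\mo{p_i^{-1}}{p_j}N_j=N+N_{ij}$ (the $pq+1$ identity) gives $B=(1-x^{N_{ij}})+(x^N-1)(x^{\mo{p_j^{-1}}{p_i}N_i}-x^{N_{ij}})$, so $P_N-\text{RHS}$ is $P_N(x)\cdot(1-x^N)\cdot(\text{polynomial in }x^{N_{ij}})/(1-x^{N_{ij}})$, visibly a multiple of $1-x^N$. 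No case analysis is needed.

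\textbf{Where your outline slips.} First, a logical point: after multiplying by $D=\prod_\ell(1-x^{N_\ell})$, showing the two numerators are congruent modulo $D$ is not enough to conclude the original congruence modulo $1-x^N$; you must first factor out the common $1-x^N$ from both numerators (present in every term) and then work modulo $D$, exactly as Proposition~\ref{pn} does. Second, and more substantively, once $1-x^N$ is factored out your claim that for $m=2$ with $\{a_1,a_2\}\neq\{i,j\}$ ``both sides are divisible by $(1-x^{N_{a_1a_2}})^m$ and there is nothing to check'' is false: the reduced left side $\prod_{a<b}(1-x^{N_{ab}})$ is divisible by $1-x^{N_{a_1a_2}}$ only once, and the same holds for at least one of the two right-side terms. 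These cases therefore require the same exponent-reduction argument as your $m=1$ case (and for $\ell\notin\{i,j\}$ that reduction needs the identity $\mo{p_j^{-1}}{p_i}N_i+\mo{p_i^{-1}}{p_j}N_j=N+N_{ij}$, not just $\mo{p_i^{-1}}{p_j}N_j\equiv N_{ij}\pmod{N_i}$). All of this is fixable, but by the time you have carried it out you have essentially rederived the bracket identity $B-(1-x^{N_{ij}})=(x^N-1)(\cdots)$ that the paper writes down in two lines.
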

\begin{proof}
The right side can be factored as
\[\frac{P_N(x)}{1-x^{N_{ij}}}((1-x^{\mo{p_j^{-1}}{p_i} N_i})+x^{\mo{p_j^{-1}}{p_i}N_i}\cdot (x^N-x^{\mo{p_i^{-1}}{p_j}  N_j})).\]
Subtracting this from the left side and multiplying by $1-x^{N_{ij}}$, we therefore need that
\[P_N(x) \cdot \left(1-x^{N_{ij}}-(1-x^{\mo{p_j^{-1}}{p_i} N_i})-x^{\mo{p_j^{-1}}{p_i}N_i}\cdot (x^N-x^{\mo{p_i^{-1}}{p_j}  N_j})\right)\]
is divisible by $(1-x^N)(1-x^{N_{ij}})$.
But since $\mo{p_j^{-1}}{p_i}N_i+\mo{p_i^{-1}}{p_j}  N_j = N_{ij}(\mo{p_j^{-1}}{p_i}p_j+\mo{p_i^{-1}}{p_j}p_i) = N+N_{ij}$,
\begin{align*}
(1-x^{\mo{p_j^{-1}}{p_i} N_i})+x^{\mo{p_j^{-1}}{p_i}N_i}\cdot (x^N-x^{\mo{p_i^{-1}}{p_j}  N_j}) &= x^{\mo{p_j^{-1}}{p_i} N_i}(x^N-1) + 1-x^{N+N_{ij}}\\
&=(1-x^{N_{ij}})+ (x^N-1)(x^{\mo{p_j^{-1}}{p_i}N_i}-x^{N_{ij}}).
\end{align*}
Thus we just need that $P_N(x) \cdot (1-x^N)(x^{\mo{p_j^{-1}}{p_i}N_i}-x^{N_{ij}})$ is divisible by $(1-x^N)(1-x^{N_{ij}})$, which is clear because both exponents in the final term are divisible by $N_{ij}$.
\end{proof}

Let $0 \leq \bar{k}_i < N_i$ be the integer such that \[p_i\bar{k}_i+\textstyle\sum_{j \in V_i(k)} N_{ij} \equiv k \pmod {N_i}.\]
Observe that $V_j^{N_i}(\bar{k}_i)$ and $W_j^{N_i}(\bar{k}_i)$ are, as in the definition of $V_{j}(k)$ and $W_{j}(k)$, the subsets defining the interval containing $h_j(k)-\mo{p_i^{-1}}{p_j}\cdot \{j \in V_{i}(k)\}$ except that we only consider subsets not containing $i$. (Here, $\{j \in V_i(k)\}$ equals 1 if $j \in V_i(k)$ and 0 otherwise.)

\begin{lemma} \label{shrink}
Let $N=p_1p_2\dots p_n$ be as constructed in Lemma~\ref{pickprimes}. Then for $0 \leq k < N$,
\[a_N(k) = (-1)^{|V_{n-1}(k)|} a_{N_{n-1}}(\bar{k}_{n-1}) \cdot \{n \not \in V_{n-1}(k)\} +
(-1)^{|V_{n}(k)|} a_{N_{n}}(\bar{k}_n) \cdot \{n-1 \in V_n(k)\}.\]
\end{lemma}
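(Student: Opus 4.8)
The plan is to apply Proposition~\ref{two} with $i=n-1$ and $j=n$ and then read off the coefficient of $x^k$ from each of the two terms on the right-hand side. First note that condition~(c) of Lemma~\ref{pickprimes} gives $\sum_i \frac1{p_i}<1<\frac{2n}{n-1}$, so by Proposition~\ref{176} we have $\deg P_N<N$, and applying the same bound to $N_{n-1}$ and $N_n$ also gives $\deg P_{N_{n-1}}<N_{n-1}$ and $\deg P_{N_n}<N_n$. Hence $a_N(k)$ is exactly the coefficient of $x^k$ in the reduction modulo $x^N-1$ of the right-hand side of Proposition~\ref{two}, so it suffices to show that its first term contributes $(-1)^{|V_{n-1}(k)|}a_{N_{n-1}}(\bar k_{n-1})\,\{n\notin V_{n-1}(k)\}$ and its second term contributes $(-1)^{|V_n(k)|}a_{N_n}(\bar k_n)\,\{n-1\in V_n(k)\}$.

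For the first term I would expand each factor as a sum of monomials: $\frac{1-x^{cN_{n-1}}}{1-x^{N_{n-1}}}=\sum_{a=0}^{c-1}x^{aN_{n-1}}$ with $c=\mo{p_n^{-1}}{p_{n-1}}$, then $\prod_{\ell=1}^{n-2}(1-x^{N_{(n-1)\ell}})=\sum_{A\subset[n-2]}(-1)^{|A|}x^{\sum_{\ell\in A}N_{(n-1)\ell}}$, and $P_{N_{n-1}}(x^{p_{n-1}})=\sum_m a_{N_{n-1}}(m)x^{p_{n-1}m}$. Reading off the coefficient of $x^k$ modulo $x^N-1$ then reduces to counting triples $(a,A,m)$ with $aN_{n-1}+p_{n-1}m+\sum_{\ell\in A}N_{(n-1)\ell}\equiv k\pmod{p_i}$ for every $i$. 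The congruence modulo $p_{n-1}$ (using $N_{(n-1)\ell}N_{n-1}^{-1}\equiv p_\ell^{-1}$) forces $a\equiv h_{n-1}(k)-\sum_{\ell\in A}p_\ell^{-1}\pmod{p_{n-1}}$; together with $0\le a<c$ and condition~(a) of Lemma~\ref{pickprimes} for the pair $(n-1,n)$ --- which guarantees that adding $p_n^{-1}$ to any element of $S_{n-1}(p_1\cdots p_{n-1})$ moves it forward, without wraparound, by exactly $c$, reaching the next region boundary --- this selects the unique $A$ with $A\prec_{n-1}h_{n-1}(k)\prec_{n-1}A\cup\{n\}$, that is, $A=V_{n-1}(k)$ and $W_{n-1}(k)=V_{n-1}(k)\cup\{n\}$, which is possible exactly when $n\notin V_{n-1}(k)$. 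For that $A$, the remaining congruences (one modulo each $p_i$ with $i\neq n-1$) pin $m$ to the unique element of $[0,N_{n-1})$ congruent to $p_{n-1}^{-1}(k-N_{(n-1)i}\,\{i\in A\})$ modulo each $p_i$, which is precisely $\bar k_{n-1}$ by its defining congruence; and the sign is $(-1)^{|A|}=(-1)^{|V_{n-1}(k)|}$.

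The second term is handled the same way, with two twists. First, $\frac{x^N-x^{c'N_n}}{1-x^{N_n}}=-\sum_{b=c'}^{p_n-1}x^{bN_n}$ with $c'=\mo{p_{n-1}^{-1}}{p_n}$, so there is an overall sign and a range $[c',p_n)$ instead of $[0,c)$. The congruence modulo $p_n$ forces $b\equiv h_n(k)-\sum_{\ell\in A}p_\ell^{-1}\pmod{p_n}$, and the constraint $b\ge c'$, together with condition~(b) of Lemma~\ref{pickprimes} for the pair $(n-1,n)$ --- which gives $p_n-c'<d(S_n(p_1\cdots p_{n-2}p_n))$, so that adding $p_{n-1}^{-1}$ moves any element of $S_n(p_1\cdots p_{n-2}p_n)$ \emph{backward} by $p_n-c'$ and makes $A\cup\{n-1\}$ the immediate $\prec_n$-predecessor of $A$ --- selects the unique $A\subset[n-2]$ with $A\cup\{n-1\}\prec_n h_n(k)\prec_n A$, that is, $V_n(k)=A\cup\{n-1\}$, which is possible exactly when $n-1\in V_n(k)$. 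The remaining congruences again force $m=\bar k_n$ (here one uses $c\equiv p_n^{-1}\pmod{p_{n-1}}$, equivalently $cN_{n-1}\equiv N_{(n-1)n}\pmod{p_{n-1}}$, to match exponents), and the overall $-1$ times $(-1)^{|A|}=(-1)^{|V_n(k)|-1}$ yields $(-1)^{|V_n(k)|}$, completing the identification.

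The hard part will be the combinatorial bookkeeping in the middle of each case: one must verify that the single range constraint coming from the geometric series ($0\le a<c$, respectively $c'\le b<p_n$) translates \emph{exactly} into the region condition $n\notin V_{n-1}(k)$, respectively $n-1\in V_n(k)$, and that in the favorable case there is precisely one valid $A$ (and hence one $a$ or $b$, and one $m$). This is exactly what conditions~(a) and~(b) of Lemma~\ref{pickprimes} are designed to supply: they ensure that modulo $p_{n-1}$ the translate by $p_n^{-1}$ of each region boundary lands strictly before the next boundary, and modulo $p_n$ the translate by $p_{n-1}^{-1}$ lands strictly after the previous one, so these shifted subsets interleave the original ones with no collisions (by genericity of $N$) and no wraparound. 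Once this interleaving is established, the rest is a routine application of the Chinese Remainder Theorem to identify $m$ with $\bar k_{n-1}$ or $\bar k_n$.
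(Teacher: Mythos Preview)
Your proposal is correct and follows essentially the same approach as the paper: apply Proposition~\ref{two} with $(i,j)=(n-1,n)$, expand each term, and use the interleaving structure of $S_{n-1}(N)$ and $S_n(N)$ to single out the unique contributing subset $A$ and identify $m$ with $\bar k_{n-1}$ or $\bar k_n$. The only cosmetic difference is that the paper packages the adjacency of $A$ and $A\cup\{n\}$ (resp.\ $A\cup\{n-1\}$ and $A$) into Lemma~\ref{mod4}(ii),(iii), whereas you invoke conditions~(a) and~(b) of Lemma~\ref{pickprimes} directly for the pair $(n-1,n)$---which is exactly what Lemma~\ref{mod4} unwinds to in this case.
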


\begin{proof}
Consider the right side of Proposition~\ref{two}. For any $0 \leq \bar{k}_{i} < N_i$, the product
\[\frac{1-x^{\mo{p_j^{-1}}{p_i} N_i}}{1-x^{N_i}}P_{N_i}(x^{p_i})=(1+x^{N_i}+x^{2N_i}+\dots + x^{(\mo{p_j^{-1}}{p_i}-1)N_i})P_{N_i}(x^{p_i})\]
contributes $a_{N_{i}}(\bar{k}_{i})$ to the coefficient of $a_N(k)$ for $k=p_i\bar{k}_i+cN_i$ for $0 \leq c < \mo{p_j^{-1}}{p_i}$. Considering all possible $\bar{k}_i$, the resulting values of $k$ are exactly those for which
$0 \leq h_i(k) < \mo{p_j^{-1}}{p_i}$.
Then the entire first term on the right side of Proposition~\ref{two} contributes, for each subset $V \subset [n]\backslash \{i, j\}$,  $(-1)^{|V|}a_{N_i}(\bar{k}_i)$ to each $a_N(k)$ for which $h_i(k)$ lies in the half-open cyclic interval from 
$\mo{\textstyle\sum_{s \in V}p_s^{-1}}{p_i}$ to $\mo{\textstyle\sum_{s \in V \cup \{j\}}p_s^{-1}}{p_i}$.
Now let $i=n-1$ and $j=n$. By Lemma~\ref{mod4}(ii), the $a_N(k)$ that receive a contribution from this first term are those for which $n \not\in V_{n-1}(k)$, and there can only be one such contribution, namely the one from $V=V_{n-1}(k)$. This yields the first term on the right side of the lemma statement.

Similarly, the product
\[x^{\mo{p_j^{-1}}{p_i}N_i}\cdot \frac{x^N-x^{\mo{p_i^{-1}}{p_j}  N_j}}{1-x^{N_j}}P_{N_j}(x^{p_j})\]
contributes, for each $0 \leq \bar{k}_j < N_j$, $-a_{N_j}(\bar{k})$ to those $a_N(k)$ for which \[k\equiv \mo{{p_j}^{-1}}{p_{i}}N_{i}+\mo{p_{i}^{-1}}{p_j}N_j+cN_j+p_j\bar{k}_j=p_j\bar{k}_j+N+N_{ij}+cN_j \equiv p_j\bar{k}_j+N_{ij}+cN_j \pmod{N}\]
for $0 \leq c < p_j-\mo{{p_i}^{-1}}{p_j}$. Over all $\bar{k}_j$, these are those $k$ for which $\mo{p_{i}^{-1}}{p_j}\leq h_j(k) <p_j$. Then the entire second term on the right side of Proposition~\ref{two} contributes, for each subset $W \subset [n]\backslash \{i,j\}$, $(-1)^{|W|+1}a_{N_j}(\bar{k}_j)$ to each $a_N(k)$ for which $h_j(k)$ lies in the half-open cyclic interval from $\mo{\textstyle\sum_{s \in W \cup \{i\}}p_s^{-1}}{p_j}$ to $\mo{\textstyle\sum_{s \in W}p_s^{-1}}{p_j}$. When $i=n-1$ and $j=n$, by Lemma~\ref{mod4}(iii), there can again be at most one contribution to any $a_N(k)$, namely when $n-1 \in V_n(k)$ and $V_n(k)=W\cup \{n-1\}$, which yields the second term above, completing the proof.
\end{proof}

Note that this lemma implies that if $n \in V_{n-1}(k)$ and $n-1 \not \in V_n(k)$, then $a_N(k)=0$.

We can also prove a slightly different version of Lemma~\ref{shrink}. 

\begin{lemma} \label{shrink2}
Let $N=p_1p_2\dots p_n$ be as constructed in Lemma~\ref{pickprimes}. For $0 \leq k < N$, 
\begin{multline*}
a_N(k) = (-1)^{|V_{n-1}(k)|} a_{N_{n-1}}(\mo{\bar{k}_{n-1}-N_{n-1,n}}{N_{n-1}}) \cdot \{n \not \in V_{n-1}(k)\} \\+
(-1)^{|V_n(k)|} a_{N_{n}}(\mo{\bar{k}_n+N_{n-1,n}}{N_n}) \cdot \{n-1 \in V_n(k)\}.\end{multline*}
\end{lemma}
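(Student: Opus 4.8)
The plan is to derive Lemma~\ref{shrink2} from Lemma~\ref{shrink} by applying the same coboundary identity of Proposition~\ref{two} but with the roles of $p_{n-1}$ and $p_n$ swapped. Lemma~\ref{shrink} was proved by expanding the right side of Proposition~\ref{two} with $i=n-1$, $j=n$; the natural alternative is to instead take $i=n$, $j=n-1$, which shifts where the ``$x^N-1$'' term lands and correspondingly twists the relevant residues by $\mo{p_{n-1}^{-1}}{p_n}\cdot N_n$ and $\mo{p_n^{-1}}{p_{n-1}}\cdot N_{n-1}$. Using the identity $\mo{p_n^{-1}}{p_{n-1}}N_{n-1}+\mo{p_{n-1}^{-1}}{p_n}N_n = N + N_{n-1,n}$ from the proof of Proposition~\ref{two}, these twists simplify modulo $N_{n-1}$ and $N_n$ respectively to a shift by $\pm N_{n-1,n}$, which is exactly the difference between the arguments of $a_{N_{n-1}}$ and $a_{N_n}$ in the two lemma statements.

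Concretely, I would first re-run the argument of Lemma~\ref{shrink} verbatim but with $(i,j)=(n,n-1)$: the first term of Proposition~\ref{two} now contributes $(-1)^{|V|}a_{N_n}(\cdot)$ to those $a_N(k)$ with $h_n(k)$ in the cyclic interval from $\mo{\sum_{s\in V}p_s^{-1}}{p_n}$ to $\mo{\sum_{s\in V\cup\{n-1\}}p_s^{-1}}{p_n}$, and by Lemma~\ref{mod4}(iii) this happens exactly when $n-1\in V_n(k)$, with $V=V_n(k)\setminus\{n-1\}$. One then reads off that the correct argument of $a_{N_n}$ is the integer $\bar k_n'$ with $p_n\bar k_n' + \sum_{s\in V_n(k)\setminus\{n-1\}}N_{ns}\equiv k\pmod{N_n}$, and comparing with the definition of $\bar k_n$ (which uses $V_n(k)$, i.e. also includes the index $n-1$, contributing an extra $N_{n-1,n}$) gives $\bar k_n' \equiv \bar k_n + N_{n-1,n}\pmod{N_n}$. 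The second term of Proposition~\ref{two} is handled symmetrically: it contributes when $n\notin V_{n-1}(k)$, and the shift works out to $\bar k_{n-1}' \equiv \bar k_{n-1} - N_{n-1,n}\pmod{N_{n-1}}$. Assembling the two pieces yields precisely the displayed formula.

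Alternatively — and this may be cleaner to write — one can avoid redoing the expansion and instead argue directly from Lemma~\ref{shrink} applied to a different set of primes, or observe that Proposition~\ref{two} with the two orderings of $(i,j)$ differ by the manifestly-polynomial-times-$(1-x^N)$ correction computed in its proof, so subtracting the two instances of Lemma~\ref{shrink}'s derivation shows the two right-hand sides agree modulo $x^N-1$. Either way the key bookkeeping is the congruence $\mo{p_n^{-1}}{p_{n-1}}N_{n-1}+\mo{p_{n-1}^{-1}}{p_n}N_n = N + N_{n-1,n}$ together with the fact that $N_{n-1,n}\mid N_{n-1,s}$ and $N_{n-1,n}\mid N_{n,s}$, so that the various $x^{N}-1$ factors and the shifts by multiples of $N_{n-1,n}$ behave as claimed modulo $x^N-1$.

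The main obstacle I expect is purely organizational: correctly tracking which subset index ($n-1$ vs.\ $n$) is absorbed into the definition of $\bar k_i$ versus into the cyclic interval, and hence getting the sign of the $N_{n-1,n}$ shift right in each of the two terms (one gets $-N_{n-1,n}$ and the other $+N_{n-1,n}$, reflecting that in one term the index is removed from $V_i(k)$ and in the other it is the index of the ``$j$'' whose inverse residue is subtracted). There is no new analytic or combinatorial difficulty beyond what is already in the proof of Lemma~\ref{shrink}; the content is entirely a careful re-indexing, so I would present it briefly, emphasizing the swap $(i,j)=(n,n-1)$ and the congruence above, and leave the parallel verification of the second term to the reader.
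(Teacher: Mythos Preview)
Your overall plan---swap the roles of $p_{n-1}$ and $p_n$ in the identity underlying Lemma~\ref{shrink} and redo the expansion---is exactly what the paper does, and your bookkeeping for the $\pm N_{n-1,n}$ shifts via $\mo{p_n^{-1}}{p_{n-1}}N_{n-1}+\mo{p_{n-1}^{-1}}{p_n}N_n=N+N_{n-1,n}$ is correct. But there is a real gap in the execution: applying Proposition~\ref{two} \emph{as stated} with $(i,j)=(n,n-1)$ does not produce single-region contributions. The first term gives the cyclic interval $[\,V,\,V\cup\{n-1\}\,)$ of length $\mo{p_{n-1}^{-1}}{p_n}$, and by condition~(b) of Lemma~\ref{pickprimes} this length is $p_n$ minus something smaller than $d(S_n(p_1\cdots p_{n-2}p_n))$---so the interval covers all but one of the $2^{n-1}$ regions, not a single region. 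Thus your claim ``by Lemma~\ref{mod4}(iii) this happens exactly when $n-1\in V_n(k)$, with $V=V_n(k)\setminus\{n-1\}$'' is false: for each $k$, almost every $V\subset[n-2]$ contributes, and the contributions (which have different arguments $\bar k_n^V$) do not collapse to a single term. The same problem occurs symmetrically in the second term, whose interval has length $p_{n-1}-\mo{p_n^{-1}}{p_{n-1}}$, again large by condition~(a).

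The fix is what the paper actually does: before swapping $(i,j)$, also swap which of the two fractions carries the $1-x^{\,\cdot\,}$ and which carries the $x^N-x^{\,\cdot\,}$, obtaining the variant
\[
P_N(x)\equiv \frac{x^N-x^{\mo{p_j^{-1}}{p_i}N_i}}{1-x^{N_i}}P_{N_i}(x^{p_i})\prod_{k\neq i,j}(1-x^{N_{ik}})
+x^{\mo{p_j^{-1}}{p_i}N_i}\cdot\frac{1-x^{\mo{p_i^{-1}}{p_j}N_j}}{1-x^{N_j}}P_{N_j}(x^{p_j})\prod_{k\neq i,j}(1-x^{N_{jk}})
\]
modulo $x^N-1$ (proved just as Proposition~\ref{two}). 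With $(i,j)=(n,n-1)$ the relevant interval lengths are now $p_n-\mo{p_{n-1}^{-1}}{p_n}$ and $\mo{p_n^{-1}}{p_{n-1}}$, both small, so Lemma~\ref{mod4} yields exactly one contributing $V$ as you intended, and the rest of your write-up (including the identification of $\bar k_n'$ and $\bar k_{n-1}'$) then goes through.
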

\begin{proof}
By a similar argument as in Proposition~\ref{two},
\[P_N(x) \equiv \frac{x^N-x^{\mo{p_j^{-1}}{p_i} N_i}}{1-x^{N_i}}P_{N_i}(x^{p_i}) \cdot \prod_{k \neq i, j} (1-x^{N_{ik}}) + 
x^{\mo{p_j^{-1}}{p_i}N_i}\cdot \frac{1-x^{\mo{p_i^{-1}}{p_j}  N_j}}{1-x^{N_j}}P_{N_j}(x^{p_j}) \cdot \prod_{k \neq i, j} (1-x^{N_{jk}}).\]
modulo $x^N-1$. Now apply the same argument as in Lemma~\ref{shrink} with $i=n$ and $j=n-1$.
\end{proof}
(Alternatively, one can use Proposition~\ref{minus-n} to compare the corresponding terms in Lemma~\ref{shrink} and Lemma~\ref{shrink2}.)

We are now ready to prove the main theorem of this section.

\begin{thm}
Let $N=p_1p_2\dots p_n$ be as constructed in Lemma~\ref{pickprimes}. Then for all $k$, $|a_N(k)| \leq 1$.
\end{thm}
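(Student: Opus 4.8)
The plan is to induct on $n$, using Lemma~\ref{shrink} (or Lemma~\ref{shrink2}) to express $a_N(k)$ as a sum of at most two terms, each of which is (up to sign) a coefficient of one of the smaller polynomials $P_{N_{n-1}}(x)$ or $P_{N_n}(x)$, to which the induction hypothesis applies. The base case $n\le 3$ is already known ($M(2)=M(3)=1$). The difficulty is that each of the two terms has absolute value at most $1$ by induction, so their sum could in principle have absolute value $2$; we must rule out the case where both terms are nonzero and have the same sign. So the heart of the argument is: whenever both indicator conditions $\{n\notin V_{n-1}(k)\}$ and $\{n-1\in V_n(k)\}$ hold and both coefficients $a_{N_{n-1}}(\bar k_{n-1})$ and $a_{N_n}(\bar k_n)$ are nonzero, the two signed terms $(-1)^{|V_{n-1}(k)|}a_{N_{n-1}}(\bar k_{n-1})$ and $(-1)^{|V_n(k)|}a_{N_n}(\bar k_n)$ have opposite signs.

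First I would unwind exactly what $V_{n-1}(k)$, $V_n(k)$, $\bar k_{n-1}$, $\bar k_n$ are in terms of the coordinates $x_i(k)$ and the residues $h_i(k)$, and record the relationship between $V_{n-1}(k)$ and the set $V^{N_{n-1}}_j(\bar k_{n-1})$ described just before Lemma~\ref{shrink}: namely $V_{n-1}(k)$ restricted to $[n]\setminus\{n-1\}$ recovers the sets that define the region of $\bar k_{n-1}$ in $P_{N_{n-1}}$, and similarly for $n$. The key structural input will be Lemma~\ref{mod4}, which pins down $V_i(k)\cap\{n-2,n-1,n\}$ from $x_i(k)\bmod 4$ for $i\in\{n-2,n-1,n\}$, together with Proposition~\ref{zero} / its converse (a coefficient can only be nonzero if $h_i(k)$ actually equals one of the $\mo{\sum_{j\in T}p_j^{-1}}{p_i}$, i.e. $k$ sits at a "lower corner" in each coordinate direction, or else $k\ge N_i$). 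Combining these, the hypothesis that $a_{N_{n-1}}(\bar k_{n-1})\ne 0$ forces strong constraints on the residues $h_j(k)$ modulo $p_j$ for the relevant small primes, and likewise $a_{N_n}(\bar k_n)\ne 0$; I expect these two sets of constraints to be jointly satisfiable only when the parities $|V_{n-1}(k)|$ and $|V_n(k)|$ differ, which is exactly what forces cancellation.

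Concretely, I would track how passing from $N$ to $N_{n-1}$ versus $N_n$ changes the relevant $V$-sets and hence their parities. Using Lemma~\ref{mod4}(ii) and (iii): if $n\notin V_{n-1}(k)$, then $V_{n-1}(k)$ looks like either $V$ or $W\cup\{n-2\}$ (for $V\prec_{n-1}W\subset[n-3]$), and if $n-1\in V_n(k)$ then $V_n(k)$ looks like either $W\cup\{n-2,n-1\}$ or $W\cup\{n-1\}$. One then checks that in every combination where both underlying smaller coefficients can simultaneously be nonzero (which via Lemma~\ref{mod4}(i) applied to $N_{n-1}$ and $N_n$ constrains the $\{n-2\}$-membership and the residues mod $4$ of the new coordinates), the quantity $|V_{n-1}(k)|+|V_n(k)|$ is odd, so the two terms in Lemma~\ref{shrink} have opposite signs and $|a_N(k)|\le 1$. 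Where the combinations do not force opposite signs, I would instead switch to Lemma~\ref{shrink2}, whose terms involve the shifted arguments $\mo{\bar k_{n-1}-N_{n-1,n}}{N_{n-1}}$ and $\mo{\bar k_n+N_{n-1,n}}{N_n}$; comparing Lemma~\ref{shrink} and Lemma~\ref{shrink2} (which agree as identities but present $a_N(k)$ with different term-by-term data), one of the two presentations will always exhibit the cancellation, because a nonzero value in both terms of both presentations would over-determine the residues beyond what Lemma~\ref{pickprimes}'s ordering permits.

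The main obstacle, as I see it, is purely bookkeeping: there is no single clean inequality, but rather a finite case analysis over the residues modulo $4$ of $x_{n-2}(k), x_{n-1}(k), x_n(k)$ (equivalently the membership pattern of $\{n-2,n-1,n\}$ in the various $V$-sets), and for each pattern one must verify either that one of the two terms vanishes or that the parities differ. I would organize this by first using Proposition~\ref{zero} to throw away all patterns in which $a_{N_{n-1}}(\bar k_{n-1})$ or $a_{N_n}(\bar k_n)$ is automatically zero, then handle the surviving handful of patterns by the parity computation above, invoking Lemma~\ref{shrink2} as the fallback. The induction hypothesis does the rest, since each surviving term is a coefficient of a polynomial $P_{N'}$ with $n-1$ prime factors, where $N'$ is itself of the form guaranteed by Lemma~\ref{pickprimes} (its primes $p_1,\dots,p_{n-2}$ or $p_1,\dots,p_{n-2},p_n$ inherit conditions (a)--(c) as sub-conditions), so $|a_{N'}(\cdot)|\le 1$.
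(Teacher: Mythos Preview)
Your overall framework (induct on $n$, use Lemma~\ref{shrink}/\ref{shrink2}, reduce to the case where both indicator conditions hold and both smaller coefficients are nonzero) matches the paper. But the step you identify as ``the heart of the argument'' has a genuine gap. You assert that in the surviving cases the two signed terms $(-1)^{|V_{n-1}(k)|}a_{N_{n-1}}(\bar k_{n-1})$ and $(-1)^{|V_n(k)|}a_{N_n}(\bar k_n)$ have opposite signs, and you propose to prove this by showing $|V_{n-1}(k)|+|V_n(k)|$ is odd via Lemma~\ref{mod4}. That inference is invalid: even if the parities of $|V_{n-1}(k)|$ and $|V_n(k)|$ differ, the coefficients $a_{N_{n-1}}(\bar k_{n-1})$ and $a_{N_n}(\bar k_n)$ themselves take values $\pm 1$, and nothing in your case analysis controls \emph{their} signs. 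So a parity check on the $V$-sets alone cannot force the two terms to cancel. Your fallback (``switch to Lemma~\ref{shrink2}; nonvanishing in all four terms would over-determine the residues'') is not substantiated either: the paper's proof shows exactly such a configuration \emph{does} occur, with all four terms of Lemmas~\ref{shrink} and~\ref{shrink2} nonzero, so this route is closed.

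What the paper actually does in that residual case is not a sign-cancellation argument at level $n-1$, but a second descent. After pinning down $x_n(k)\bmod 4$, $x_{n-1}(k)\bmod 4$, and $x_{n-2}(k)\bmod 4$ (using the nonvanishing hypotheses together with Lemma~\ref{mod4}), the paper applies Lemma~\ref{shrink} \emph{again} to each of $a_{N_{n-1}}(\bar k_{n-1})$ and $a_{N_n}(\bar k_n)$, with $p_{n-2}$ now playing the role of the last prime. The constraints force one term in each of these second applications to vanish, so $a_{N_{n-1}}(\bar k_{n-1})=\pm a_{N_{n-1,n-2}}(\cdot)$ and $a_{N_n}(\bar k_n)=\pm a_{N_{n,n-2}}(\cdot)$. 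The resulting two-term expression for $\pm a_N(k)$ is then identified, after a coordinate computation, with the two terms of Lemma~\ref{shrink} (or~\ref{shrink2}) applied to $a_{N_{n-2}}(s)$ for a specific $s$. Hence $a_N(k)=\pm a_{N_{n-2}}(s)$, and the induction hypothesis on $N_{n-2}$ (which has $n-1$ prime factors, and any subset of the $p_i$ still satisfies Lemma~\ref{pickprimes}) finishes the proof. The missing idea in your plan is precisely this second application of the reduction lemma and the identification with a single coefficient one level down; without it, there is no mechanism to bound the sum of two independently-$\pm 1$ terms.
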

\begin{proof}
We proceed by induction on $n$, having proven the cases $n \leq 3$ previously. Note that if $p_1, p_2, \dots, p_n$ satisfy the conditions of Lemma~\ref{pickprimes}, then so does any subset of them.

By the induction hypothesis, we may assume that none of the four terms on the right sides of Lemma~\ref{shrink} and Lemma~\ref{shrink2} vanish. In particular, this implies that $n \not \in V_{n-1}(k)$, so $x_{n-1}(k)$ is even, and similarly $x_n(k)$ is odd.

Suppose $x_n(k) \equiv 1 \pmod 4$, so that $n-2, n-1 \in V_{n}(k)$. We claim that $x_{n-1}(k) \equiv 2 \pmod 4$ and $x_{n-2}(k) \equiv 2 \pmod 4$. Indeed, suppose $x_{n-1}(k) \equiv 0 \pmod 4$, so that $n-2, n \not \in V_{n-1}(k)$.

If $x_{n-2}(k) \equiv 0 \pmod 4$, then let $m' = \mo{\bar{k}_n+N_{n-1,n}}{N_n}$. Since $n-2 \in V_{n}(k)$ and $W_{n-2}(k)$ contains $n$ but not $n-1$, it follows that $W^{N_n}_{n-2}(\bar{k}_n)\backslash \{n\}=W_{n-2}(k)$ does not contain $n-1$, so $n-1 \in V^{N_n}_{n-2}(\bar{k}_n)=V^{N_n}_{n-2}(m')$. Similarly since $n-1 \in V_n(k)$ and $W_{n-1}(k)$ contains $n$ but not $n-2$, we have that $n-2 \in V^{N_n}_{n-1}(\bar{k}_n)$. Then since $x_{n-1}^{N_n}(m') = x_{n-1}^{N_n}(\bar{k}_n)+1$, $n-2 \not\in V^{N_n}_{n-1}(m)$. But then applying Lemma~\ref{shrink} to $a_{N_n}(m')$ implies that it equals 0, which we assumed was not the case.

Likewise, if $x_{n-2}(k) \not \equiv 0 \pmod 4$, then let $\ell = \bar{k}_{n-1}$. A similar argument to above implies that $n-2 \not \in V_{n}^{N_{n-1}}(\ell)$ and $n \in V_{n-2}^{N_{n-1}}(\ell)$, so that again by Lemma~\ref{shrink}, $a_{N_{n-1}}(\ell) = 0$, which we assumed was not the case. It follows that if $x_n(k) \equiv 1 \pmod 4$, then $x_{n-1}(k) \equiv 2 \pmod 4$. Then we still have that $n-2 \not \in V_n^{N_{n-1}}(\ell)$, but now $n \in V_{n-2}^{N_{n-1}}(\ell)$ if and only if $x_{n-2}(k) \not \equiv 2 \pmod 4$. Therefore if $a_{N_{n-1}}(\ell) \neq 0$, we must have $x_{n-2}(k) \equiv 2 \pmod 4$.

Assume then that $x_n(k) \equiv 1 \pmod 4$ and $x_{n-1}(k) \equiv x_{n-2}(k) \equiv 2\pmod 4$, and let $m=\bar{k}_n$ and $\ell=\bar{k}_{n-1}$ so that $a_N(k) = (-1)^{|V_{n-1}(k)|}a_{N_{n-1}}(\ell) + (-1)^{|V_n(k)|}a_{N_n}(m)$. From above, $n-2 \not \in V^{N_{n-1}}_{n}(\ell)$ and $n \not \in V_{n-2}^{N_{n-1}}(\ell)$, so 
\[a_{N_{n-1}}(\ell) = (-1)^{|V^{N_{n-1}}_{n-2}(\ell)|}a_{N_{n-1, n-2}}(\bar{\ell}_{n-2}),\]
where $\bar\ell_{n-2}$ is defined as in Lemma~\ref{shrink} applied to $a_{N_{n-1}}(\ell)$. 
Similarly, $n-2 \not \in V^{N_{n}}_{n-1}(m)$ and $n-1 \not \in V^{N_n}_{n-2}(m)$, so
\[a_{N_{n}}(m) = (-1)^{|V^{N_n}_{n-2}(m)|}a_{N_{n, n-2}}(\bar{m}_{n-2}).\]
Since $V_{n-2}^{N_{n-1}}(\ell) = V_{n-2}^{N_n}(m) = V_{n-2}(k) \cap [n-3]$, we have that
\[\pm a_N(k) = (-1)^{|V_{n-1}(k)|}a_{N_{n-1, n-2}}(\bar{\ell}_{n-2}) + (-1)^{|V_n(k)|}a_{N_{n, n-2}}(\bar{m}_{n-2}). \tag{$*$}\]

Let $s = \mo{\bar{k}_{n-2} - N_{n, n-2}}{N_{n-2}}$. We claim that the right side of $(*)$ equals $-a_{N_{n-2}}(s)$, which will complete the proof in this case by the inductive hypothesis.

It is straightforward to check that $V_n^{N_{n-2}}(s) = V_n(k)\backslash\{n-2\}$ and $V_{n-1}^{N_{n-2}}(s) = V_{n-1}(k)\backslash\{n-2\}$. Then by Lemma~\ref{shrink}, 
\begin{align*}
a_{N_{n-2}}(s) &= (-1)^{|V_{n-1}^{N_{n-2}}(s)|}a_{N_{n-1, n-2}}(\bar{s}_{n-1})+(-1)^{|V_{n}^{N_{n-2}}(s)|}a_{N_{n, n-2}}(\bar{s}_{n})\\
&=-\left((-1)^{|V_{n-1}(k)|}a_{N_{n-1,n-2}}(\bar{s}_{n-1})+ (-1)^{|V_{n}(k)|}a_{N_{n, n-2}}(\bar{s}_{n})\right)
\end{align*}
Therefore it suffices to show that $a_{N_{n-1,n-2}}(\bar{\ell}_{n-2}) = a_{N_{n-1, n-2}}(\bar{s}_{n-1})$ and $a_{N_{n,n-2}}(\bar{m}_{n-2}) = a_{N_{n, n-2}}(\bar{s}_{n})$.

For all $i \in [n]\backslash \{n-1, n-2\}$,
\[
h_i^{N_{n-1,n-2}}(\bar{s}_{n-1})=h_i^{N_{n-2}}(s) - \mo{p_{n-1}^{-1}}{p_i}\cdot \{i \in V_{n-1}^{N_{n-2}}(s)\}.
\]
When also $i \neq n$, $h_i^{N_{n-2}}(s)=h_i^{N_{n-2}}(\bar{k}_{n-2})$, so since $V_{n-2}^{N_{n-1}}(\ell) = V_{n-2}(k)\backslash\{n-1\}$,
\begin{align*}
h_i^{N_{n-1,n-2}}(\bar{s}_{n-1}) &= h_i^{N_{n-2}}(\bar{k}_{n-2}) - \mo{p_{n-1}^{-1}}{p_i}\cdot \{i \in V_{n-1}^{N_{n-2}}(s)\}\\
&=h_i^N(k)-\mo{p_{n-2}^{-1}}{p_i}\cdot \{i \in V_{n-2}(k)\}-\mo{p_{n-1}^{-1}}{p_i}\cdot \{i \in V_{n-1}^{N_{n-2}}(s)\}\\
&=h_i^N(k)-\mo{p_{n-1}^{-1}}{p_i}\cdot \{i \in V_{n-1}(k)\}-\mo{p_{n-2}^{-1}}{p_i}\cdot \{i \in V_{n-2}^{N_{n-1}}(\ell)\}\\
&=h_i^{N_{n-1}}(\ell)-\mo{p_{n-2}^{-1}}{p_i}\cdot \{i \in V_{n-2}^{N_{n-1}}(\ell)\}\\
&=h_i^{N_{n-1,n-2}}(\bar{\ell}_{n-2}).
\end{align*}
When $i=n$,
\begin{multline*}h_n^{N_{n-1,n-2}}(\bar{s}_{n-1}) = h_n^{N_{n-2}}(s) = h_n^{N_{n-2}}(\bar{k}_{n-2})-\mo{p_{n-2}^{-1}}{p_n}=h_n^N(k)-\mo{p_{n-2}^{-1}}{p_n}\\=h_n^{N_{n-1}}(\ell)-\mo{p_{n-2}^{-1}}{p_n}=h_n^{N_{n-1,n-2}}(\bar{\ell}_{n-2})-\mo{p_{n-2}^{-1}}{p_n}.\end{multline*}
But since $x_n(k) \equiv 1 \pmod 4$, Lemma~\ref{mod4} implies that even with the shift of $\mo{p_{n-2}^{-1}}{p_n}$, $\bar{s}_{n-1}$ and $\bar\ell_{n-2}$ still lie in the same region for $N_{n-1, n-2}$, proving that $a_{N_{n-1,n-2}}(\bar{\ell}_{n-2}) = a_{N_{n-1, n-2}}(\bar{s}_{n-1})$. A similar argument proves that \[h_{n-1}^{N_{n, n-2}}(\bar s_n) = h_{n-1}^{N_{n,n-2}}(\bar m_{n-2})-\mo{p_{n-2}^{-1}}{p_{n-1}}=h_{n-1}^N(k)-\mo{p_{n-2}^{-1}}{p_{n-1}}-\mo{p_n^{-1}}{p_{n-1}},\] while $h_i^{N_{n, n-2}}(\bar s_n) = h_i^{N_{n,n-2}}(\bar m_{n-2})$ for $i \in [n-3]$, and Lemma~\ref{mod4} once again shows that $\bar{s}_n$ and $\bar{m}_{n-2}$ lie in the same region for $N_{n, n-2}$. This completes the proof in the case that $x_n(k) \equiv 1 \pmod 4$.


The only case remaining is when $x_n(k) \equiv 3 \pmod 4$. This follows by essentially the same argument as the previous case: first, in order for the four terms on the right sides of Lemma~\ref{shrink} and Lemma~\ref{shrink2} not to vanish, we must have $x_{n-1}(k) \equiv x_{n-2}(k) \equiv 0 \pmod 4$. Then let $\ell' = \mo{\bar{k}_{n-1}-N_{n-1,n}}{N_{n-1}}$ and $m'=\mo{\bar{k}_n+N_{n-1,n}}{N_n}$ as in Lemma~\ref{shrink2}. Once again, we find that $n-2 \not\in V_n^{N_{n-1}}(\ell')=V_n(k) \backslash \{n-1\}$ and $n \not\in V_{n-2}^{N_{n-1}}(\ell') = V_{n-2}(k)$, so that by Lemma~\ref{shrink},
\[a_{N_{n-1}}(\ell') = (-1)^{|V_{n-2}^{N_{n-1}}(\ell')|}a_{N_{n-1,n-2}}(\bar\ell'_{n-2}).\]
Similarly
\[a_{N_n}(m') = (-1)^{|V_{n-2}^{N_{n-2}}(m)|}a_{N_{n,n-2}}(\bar m'_{n-2}).\]
Then, since $V_{n-2}^{N_{n-1}}(\ell')=V_{n-2}^{N_{n}}(m')=V_{n-2}(k)$,
\begin{align*}
a_N(k) &= (-1)^{|V_{n-1}(k)|}a_{N_{n-1}}(\ell')+(-1)^{|V_n(k)|}a_{N_n}(m')\\
&= \pm\left((-1)^{|V_{n-1}(k)|}a_{N_{n-1,n-2}}(\bar\ell'_{n-2})+(-1)^{|V_n(k)|}a_{N_{n,n-2}}(\bar m'_{n-2})\right) \tag{$**$}
\end{align*}

An analogous argument to the previous case now shows that the two terms on the right side of $(**)$ equal the two terms on the right side of Lemma~\ref{shrink2} when applied to $a_{N_{n-2}}(\bar{k}_{n-2})$, which completes the proof by induction.
\end{proof}

\section{Acknowledgments}
The author would like to thank Victor Reiner for his useful discussions and overall encouragement. This work was supported by a National Science Foundation Mathematical Sciences Postdoctoral Research Fellowship.

\bibliography{cyc}
\bibliographystyle{plain}

\end{document}